\numberwithin{equation}{section}
\newcommand{\dhocolim@}[2]{%
  \vtop{\m@th\ialign{##\cr
    \hfil$#1\operator@font hocolim$\hfil\cr
    \noalign{\nointerlineskip\kern1.5\ex@}#2\cr
    \noalign{\nointerlineskip\kern-\ex@}\cr}}%
}
\newcommand{\dhocolim}{%
  \mathop{\mathpalette\dhocolim@{\rightarrowfill@\textstyle}}\nmlimits@
}
\newtheorem{thm}{Theorem}[section]
\newtheorem{prop}[thm]{Proposition}
\newtheorem{lem}[thm]{Lemma}
\newtheorem{cor}[thm]{Corollary}
\newtheorem{clm}{Claim}
\theoremstyle{definition}
\newtheorem{conv}[thm]{Convention}
\newtheorem{rem}[thm]{Remark}
\theoremstyle{remark}
\DeclareMathOperator*{\ModR}{Mod-R}
\DeclareMathOperator{\Hom}{Hom}
\DeclareMathOperator{\RHom}{\mathbf{R}Hom}
\DeclareMathOperator*{\Ann}{Ann}
\DeclareMathOperator*{\Ass}{Ass}
\DeclareMathOperator*{\Ker}{Ker}
\DeclareMathOperator*{\Spec}{Spec}
\DeclareMathOperator{\hocolim}{hocolim}
\DeclareMathOperator{\colim}{colim}
\DeclareMathOperator{\Mcolim}{Mcolim}
\DeclareMathOperator{\Mlim}{Mlim}
\DeclareMathOperator{\aisle}{aisle}
\DeclareMathOperator{\projR}{proj-R}
\DeclareMathOperator*{\Supp}{Supp}
\DeclareMathOperator*{\Der}{\mathbf{D}}
\DeclareMathOperator*{\Htpy}{\mathbf{K}}
\DeclareMathOperator*{\Loc}{Loc}
\DeclareMathOperator*{\Cone}{Cone}
\DeclareMathOperator*{\pp}{\mathfrak{p}}
\DeclareMathOperator*{\pa}{aisle}
\newenvironment{cprf}
{\begin{proof}[Proof of Claim~\theclm]}
{ 
\end{proof}}
\newcommand{\EMP}{\textbf}
\newcommand*{\Perp}[1]{{}^{\perp_{#1}}}
\begin{document}
\title[Compactly generated t-structures over a commutative ring]{Compactly generated t-structures in the derived category of a commutative ring}
\author{Michal Hrbek}
\address{Institute of Mathematics CAS, \v{Z}itn\'{a} 25, 115 67 Prague, Czech Republic}
\email{hrbek@math.cas.cz}

\thanks{M. Hrbek was supported by the Czech Academy of Sciences Programme for research and mobility support of starting researchers, project MSM100191801.} 
\keywords{}

\begin{abstract}
		We classify all compactly generated t-structures in the unbounded derived category of an arbitrary commutative ring, generalizing the result of \cite{AJS} for noetherian rings. More specifically, we establish a bijective correspondence between the compactly generated t-structures and infinite filtrations of the Zariski spectrum by Thomason subsets. Moreover, we show that in the case of a commutative noetherian ring, any bounded below homotopically smashing t-structure is compactly generated. As a consequence, all cosilting complexes are classified up to equivalence.
\end{abstract}

\maketitle

\setcounter{tocdepth}{1} \tableofcontents

\section{Introduction}
There is a large supply of classification results for various subcategories of the unbounded derived category $\Der(R)$ of a commutative noetherian ring $R$. Since the work of Hopkins \cite{Ho} and Neeman \cite{N}, we know that the localizing subcategories of $\Der(R)$ are parametrized by data of a geometrical nature --- the subsets of the Zariski spectrum $\Spec(R)$. As a consequence, the famous telescope conjecture, stating that any smashing localizing subcategory is generated by a set of compact objects, holds for the category $\Der(R)$. These compactly generated localizations then correspond to those subsets of $\Spec(R)$, which are specialization closed, that is, upper subsets in the poset $(\Spec(R), \subseteq)$.

A more recent work \cite{AJS} provides a ``semistable'' version of the latter classification result. Specifically, it establishes a bijection between compactly generated t-structures and infinite decreasing sequences of specialization subsets of $\Spec(R)$ indexed by the integers. The concept of a t-structure in a triangulated category was introduced by Be\u{\i}linson, Bernstein, and Deligne \cite{BBD}, and can be seen as a way of constructing abelian categories inside triangulated categories, together with cohomological functors. In the setting of the derived category of a ring, t-structures proved to be an indispensable tool in various general instances of tilting theory, replacing the role played by the ordinary torsion pairs in more traditional tilting frameworks (see e.g. \cite{PV}, \cite{NSZ}, \cite{AMV}, and \cite{MV}).

Not all the mentioned results carry well to the generality of an arbitrary commutative ring $R$ (that is, without the noetherian assumption). Namely, the classification of all localizing subcategories via geometrical invariants is hopeless (see \cite{N2}), and the telescope conjecture does not hold in general, the first counterexample of this is due to Keller \cite{Kel2}. However, when restricting to the subcategories induced by compact objects, the situation is far more optimistic. The prime example of this is the classification of compact localizations due to Thomason \cite{T}. Another piece of evidence is provided by the recent classification of $n$-tilting modules \cite{HS}, extending the previous work \cite{APST} from the noetherian rings to general commutative ones. In both cases, the statement of the general result is obtained straightforwardly, by replacing any occurrence of ``a specialization closed subset of $\Spec(R)$'' by ``a Thomason subset of $\Spec(R)$''. The methods of the proof however differ substantially, due to the fact that lot of machinery available in the noetherian world simply does not work in the general situation.

In the present paper, we continue in this path by proving the following two theorems:
\begin{enumerate}
		\item \EMP{(Theorem~\ref{T:full})} Let $R$ be an arbitrary commutative ring. Then the compactly generated t-structures in $\Der(R)$ are in a bijective correspondence with decreasing sequences 
				$$\cdots \supseteq X_{n-1} \supseteq X_n \supseteq X_{n+1} \supseteq \cdots$$
				of Thomason subsets of the Zariski spectrum $\Spec(R)$.
		\item \EMP{(Theorem~\ref{T:TC})} Let $R$ be a commutative noetherian ring. Then any bounded below homotopically smashing t-structure is compactly generated.
\end{enumerate}

We postpone the precise formulation of both statements, as well as the relevant definitions, to the body of the paper. The first of the two results is a generalization of \cite[Theorem 3.10]{AJS} --- from the commutative noetherian ring to an arbitrary commutative ring. Since many of the methods used in \cite{AJS} are specific to the noetherian situation, we were forced to use different techniques. First, we characterize the compact generation of a bounded below t-structure using injective envelopes, see Lemma~\ref{L:boundedbelow} and Theorem~\ref{T:bounded}. This approach is also crucial in proving the second result (2), which can be seen as a version of the telescope conjecture for commutative noetherian rings adjusted for bounded below t-structures, as opposed to localizing pairs. As a corollary, any t-structure induced by a (bounded) cosilting complex is compactly generated (this is Corollary~\ref{C02}), extending the result proved in \cite{APST}, which established the cofinite type for $n$-cotilting modules over a commutative noetherian ring. As a consequence, cosilting complexes over commutative noetherian rings are classified in Theorem~\ref{T:cosilt}, generalizing the result for cosilting modules \cite[Theorem 5.1]{AH}. 

Finally, in Theorem~\ref{T:full} we remove the bounded below assumption and classify all compactly generated t-structures over a commutative ring. Even though we are not able to show in general, unlike in the bounded below case, that the aisle of such a t-structure is determined by supports cohomology-wise, we show that these t-structures are always generated by suspensions of Koszul complexes, using similar techniques as in the ``stable'' version of this classification for localizing pairs from \cite{KP}. 
\subsection*{Basic notation}
We work in the unbounded derived category $\Der(R) := \Der(\ModR)$ of the category $\ModR$ of all $R$-modules over a commutative ring $R$; we refer to \cite[\S 13 and \S 14]{KP2} for a sufficiently up-to-date exposition. Whenever talking about subcategories, we mean full subcategories. Given a subcategory $\mathcal{C} \subseteq \Der(R)$, and a set $I \subseteq \mathbb{Z}$, we let
$$\mathcal{C}\Perp{I}=\{X \in \Der(R) \mid \Hom_{\Der(R)}(C,X[n])=0 ~\forall C \in \mathcal{C}, n \in I\},$$
and
$$\Perp{I}\mathcal{C}=\{X \in \Der(R) \mid \Hom_{\Der(R)}(X,C[n])=0 ~\forall C \in \mathcal{C}, n \in I\}.$$
Usually, the role of $I$ will be played by symbols $k$, $\geq k$, $\leq k$, $<k$, $>k$ for some integer $k$ with their obvious interpretation as subsets of $\mathbb{Z}$.

The symbol $\Der(R)^c$ stands for the thick subcategory of all compact objects of $\Der(R)$. Recall that the compact objects of $\Der(R)$ are, up to a quasi-isomorphism, precisely the perfect complexes, that is, bounded complexes of finitely generated projective modules.

Complexes are written using the cohomological notation, that is, a complex $X$ has coordinates $X^n$, with the degree $n \in \mathbb{Z}$ increasing along the differentials $d_X^n: X^n \rightarrow X^{n+1}$. We denote by $B^n(X), Z^n(X)$, and $H^n(X)$ the $n$-th coboundary, cocycle, and cohomology of the complex $X$. For any complex $X \in \Der(R)$, we define its cohomological infimum
$$\inf(X) = \inf \{n \in \mathbb{Z} \mid H^n(X) \neq 0\},$$
and recall the usual notation $\Der^+(R) = \{X \in \Der(R) \mid \inf(X) \in \mathbb{Z}\}$ for the full subcategory of cohomologically bounded below complexes. The supremum $\sup(X)$ of a complex is defined dually.

We will freely use the calculus of the total derived bifunctors $\RHom_R(-,-)$ and $- \otimes_R^\mathbf{L} -$, considered as functors $\Der(R) \times \Der(R)^{} \rightarrow \Der(R)$ and $\Der(R) \times \Der(R^{op}) \rightarrow \Der(R)$, respectively.
Finally, given a subcategory $\mathcal{C}$ of $\ModR$ we let $\Htpy(\mathcal{C})$ denote the homotopy category of all complexes with coordinates in $\mathcal{C}$. If $\mathcal{C} = \ModR$, we write just $\Htpy(R)$. Also, we consider the bounded variants $\Htpy^\#(\mathcal{C})$ of homotopy category, where $\#$ can be one of symbols $\{< 0, > 0,-,+,b\}$ with their usual interpretation.
\section{Torsion pairs in the derived category}
	A pair $(\mathcal{U},\mathcal{V})$ of subcategories of $\Der(R)$ is called a \EMP{torsion pair} provided that 
\begin{enumerate}
	\item[(i)] $\mathcal{U}=\Perp{0}\mathcal{V}$ and $\mathcal{V}=\mathcal{U}\Perp{0}$, 
	\item[(ii)] both $\mathcal{U}$ and $\mathcal{V}$ are closed under direct summands, and
	\item[(iii)] for any $X \in \Der(R)$ there is a triangle
\begin{equation}\label{approxtriangle}
		U \rightarrow X \rightarrow V \xrightarrow{} U[1]
\end{equation}
with $U \in \mathcal{U}$ and $V \in \mathcal{V}$.
\end{enumerate}
A torsion pair $(\mathcal{U},\mathcal{V})$ in $\Der(R)$ is called:
\begin{itemize}
	\item a \EMP{t-structure} if $\mathcal{U}[1] \subseteq \mathcal{U}$,
	\item a \EMP{co-t-structure} if $\mathcal{U}[-1] \subseteq \mathcal{U}$\footnote{These also appear under the name ``weight structures'' in the literature.},
	\item a \EMP{localizing pair} if $\mathcal{U}[\pm 1] \subseteq \mathcal{U}$, that is, when both $\mathcal{U}$ and $\mathcal{V}$ are triangulated subcategories of $\Der(R)$.
\end{itemize}
		In this paper, we will be mostly interested in t-structures --- in this setting we call the subcategory $\mathcal{U}$ (resp. $\mathcal{V}$) the \EMP{aisle} (resp. the \EMP{coaisle}) of the t-structure $(\mathcal{U},\mathcal{V})$.  In the t-structure case, the \EMP{approximation} triangle \ref{approxtriangle} is unique up to a unique isomorphism, and it is of the form
		$$\tau_\mathcal{U} X \rightarrow X \rightarrow \tau_\mathcal{V} X \xrightarrow{}\tau_\mathcal{U}[1],$$
		where $\tau_\mathcal{U}$ (resp. $\tau_\mathcal{V}$) is the right (resp. left) adjoint to the inclusion of the aisle (resp. coaisle):
		$$
	\begin{tikzcd}
			\arrow[bend right]{r}[below]{\subseteq}\mathcal{U}  & \Der(R) \arrow[bend right]{l}[above]{\tau_\mathcal{U}} \arrow[bend right]{r}[below]{\tau_\mathcal{V}} & \mathcal{V}  \arrow[bend right]{l}[above]{\subseteq}.
	\end{tikzcd}
		$$
\subsection{Standard (co-)t-structures and truncations}
The pair of subcategories $(\Der^{\leq 0},\Der^{>0})$, where
$$\Der{}^{\leq 0} = \{X \in \Der(R) \mid \sup(X) \leq 0\},$$
and
$$\Der{}^{>0}= \{X \in \Der(R) \mid \inf(X) > 0\},$$
form the so-called \EMP{standard t-structure}. The left and right approximation with respect to this t-structure are denoted simply by $\tau^{\leq 0}$ and $\tau^{>0}$, and they can be identified with the standard ``soft'' truncations of complexes. Of course, we also adopt the notation $\Der^{\leq n}, \Der^{> n}, \tau^{\leq n}, \tau^{>n}$ for the obvious versions of this t-structure shifted by $n \in \mathbb{Z}$, and the associated soft truncation functors in degree $n$.

On the side of co-t-structures, we follow \cite[Example 2.9]{AMV0} and define a \EMP{standard co-t-structure} $(\Htpy_p^{>0}, \Der^{\leq 0})$, where $\Htpy_p^{>0}$ is the subcategory of all complexes from $\Der(R)$ quasi-isomorphic to a K-projective complex with zero non-positive components. Since $\Der(R)$ is equivalent to the homotopy category $\Htpy_p$ of all K-projective complexes, restricting to $\Htpy_p^{\leq 0} \simeq \Der^{\leq 0}$, it follows that $\Htpy_p^{>0} = \Perp{0}\Der^{\leq 0}$. For any complex $X$, one can obtain an approximation triangle with respect to this co-t-structure using the brutal truncations (also called the stupid truncations). Denote by $\sigma^{>n}$, and $\sigma^{\leq n}$ the right and left brutal truncations at degree $n$. Let $P$ be a K-projective complex quasi-isomorphic to $X$, then there is a triangle
$$\sigma^{>0}P \rightarrow X \rightarrow \sigma^{\leq 0} P \xrightarrow{} \sigma^{>0}P[1],$$
and it approximates $X$ with respect to the standard co-t-structure. 
\subsection{Generation of torsion pairs}
Let $(\mathcal{U},\mathcal{V})$ be a torsion pair,  and $\mathcal{S}$ be a subclass of $\Der(R)$. We say that the torsion pair $(\mathcal{U},\mathcal{V})$ is \EMP{generated by the class $\mathcal{S}$} if $(\mathcal{U},\mathcal{V})=(\Perp{0}(\mathcal{S}\Perp{0}),\mathcal{S}\Perp{0})$. If $\mathcal{S}$ consists of objects from $\Der(R)^c$, we say that $(\mathcal{U},\mathcal{V})$ is \EMP{compactly generated}.
Dually, the torsion pair $(\mathcal{U},\mathcal{V})$ is \EMP{cogenerated by $\mathcal{S}$} if $(\mathcal{U},\mathcal{V})=(\Perp{0}\mathcal{S},(\Perp{0}\mathcal{S})\Perp{0})$. 

	Given a general class $\mathcal{S}$, we cannot always claim that the pair $(\Perp{0}(\mathcal{S}\Perp{0}),\mathcal{S}\Perp{0})$ is a torsion pair. But if $\mathcal{S}$ forms a set, we can always use it to generate a t-structure. In particular, since $\Der^c(R)$ is a skeletally small category, we can always generate a t-structure by a given family of compact objects.
	\begin{thm}\emph{(\cite[Proposition 3.2]{AJS2})}\label{aisles}
		If $\mathcal{C}$ is a \EMP{set} of objects of $\Der(R)$, then $\Perp{0}(\mathcal{C}\Perp{\leq 0})$ is an aisle of a t-structure.
\end{thm}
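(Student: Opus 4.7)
Write $\mathcal{V} := \mathcal{C}\Perp{\leq 0}$ and $\mathcal{U} := \Perp{0}\mathcal{V}$, so the statement to prove is that $(\mathcal{U},\mathcal{V})$ is a t-structure (with $\mathcal{V}$ the corresponding coaisle). The first move is to record the formal closure properties. Since $n\leq 0$ is preserved under subtracting non-negative integers, $\mathcal{V}$ is closed under products, summands, extensions and negative shifts; dually $\mathcal{U}$ is closed under coproducts, summands, extensions and non-negative shifts. In particular $\mathcal{U}[1]\subseteq\mathcal{U}$, so any approximation triangle we produce will automatically witness a t-structure. Moreover, by the very definition of $\mathcal{V}$, every shift $C[k]$ with $C\in\mathcal{C}$ and $k\geq 0$ lies in $\mathcal{U}$, and $\Hom_{\Der(R)}(U,V)=0$ for every $U\in\mathcal{U}$, $V\in\mathcal{V}$.

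What remains is the hard part: for each $X\in\Der(R)$, produce a triangle $U\to X\to V\to U[1]$ with $U\in\mathcal{U}$ and $V\in\mathcal{V}$. The plan is a transfinite small object argument. Set $Y_0:=X$, and at each successor ordinal form the triangle $W_\alpha\to Y_\alpha\to Y_{\alpha+1}\to W_\alpha[1]$, where $W_\alpha := \bigoplus_{(C,k,f)}C[k]$ is indexed by all triples with $C\in\mathcal{C}$, $k\geq 0$, $f\colon C[k]\to Y_\alpha$, and $W_\alpha\to Y_\alpha$ is the canonical assembly of the $f$'s. At limit ordinals I would put $Y_\lambda:=\hocolim_{\alpha<\lambda}Y_\alpha$. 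By construction, each morphism $f\colon C[k]\to Y_\alpha$ becomes zero in $Y_{\alpha+1}$; and the cone of $Y_0\to Y_\lambda$, built from iterated extensions, coproducts, shifts and chain homotopy colimits (the last via the telescope triangle, which is itself an extension of coproducts), always lies in $\mathcal{U}$.

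The obstacle, and the only place where the hypothesis that $\mathcal{C}$ is a \emph{set} is used, is termination: one needs some regular cardinal $\lambda$ for which $\Hom_{\Der(R)}(C[k],Y_\lambda)=0$ for every $C\in\mathcal{C}$ and $k\geq 0$. If the $C$ were compact, $\lambda=\omega$ would work immediately from $\Hom(C[k],\hocolim Y_\alpha)\cong\colim\Hom(C[k],Y_\alpha)$; in general one has to invoke relative smallness. The plan is to replace each $C$ by a K-projective resolution, choose a regular cardinal $\lambda$ strictly larger than $|\mathcal{C}|$ and the cardinalities of the generating sets of these resolutions, and apply Quillen's small object argument lifted from a suitable cofibrantly generated model structure on $\Ch(R)$ — or equivalently, use that $\Der(R)$ is a well-generated triangulated category. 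This guarantees that every map $C[k]\to Y_\lambda$ factors through some $Y_\alpha$ with $\alpha<\lambda$, and is therefore killed at stage $\alpha+1$. This is the technical heart of \cite[Proposition 3.2]{AJS2}.

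Once the approximation triangle exists, the t-structure axioms are complete, and as a bonus one gets $\mathcal{U}\Perp{0}=\mathcal{V}$ for free: given $Y\in\mathcal{U}\Perp{0}$ and its approximation $U\to Y\to V$, the map $U\to Y$ is zero, hence $U$ is a summand of $V[-1]\in\mathcal{V}$, so $U\in\mathcal{U}\cap\mathcal{V}$ and thus $\id_U=0$, forcing $Y\cong V\in\mathcal{V}$.
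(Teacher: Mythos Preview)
The paper does not supply its own proof of this statement; it simply cites \cite[Proposition 3.2]{AJS2} and moves on. Your proposal is a correct outline of the standard small object argument that underlies that reference, so there is nothing to compare against in the paper itself.

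One remark on execution: the transfinite construction at limit ordinals is delicate if carried out purely in $\Der(R)$, since homotopy colimits of arbitrary well-ordered diagrams are only defined up to non-canonical isomorphism and functoriality is not automatic. The clean way --- and what \cite{AJS2} effectively does --- is to run the whole tower in $\Ch(R)$ (or the model category), where honest colimits and the small object argument are available, and only pass to $\Der(R)$ at the end. You gesture at this with the phrase ``lifted from a suitable cofibrantly generated model structure'', which is the right idea; just be aware that the version written entirely inside $\Der(R)$ with $Y_\lambda := \hocolim_{\alpha<\lambda} Y_\alpha$ would require additional justification (coherence of the tower) that the model-categorical approach sidesteps.
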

		If $\mathcal{C}$ is a set, we let $\aisle(\mathcal{C})$ denote the class $\Perp{0}(\mathcal{C}\Perp{\leq 0})$. By \cite[p. 6]{AJS}, $\aisle(\mathcal{C})$ coincides with the smallest subcategory of $\Der(R)$ containing $\mathcal{C}$ and closed under suspensions, extensions, and coproducts. 

\subsection{Homotopy (co)limits} It is well-known that, in general, the categorical (co)limit constructions have a very limited use in triangulated categories, which can be illustrated by the fact that any monomorphism, as well as any epimorphism, is split. The way to circumvent this shortcoming is to use \EMP{homotopy (co)limits} instead. Especially the directed versions of the homotopy colimits will be essential in our efforts. Because in many sources the emphasis is put only on directed systems of shape $\omega$, we feel it is necessary to recall the relevant facts on homotopy colimits of arbitrary shapes.

We start with a construction which can be, up to quasi-isomorphism, seen as a special case of the general homotopy (co)limit construction. For details, we refer the reader to \cite{NB}. Let 
$$X_0 \xrightarrow{f_0} X_1 \xrightarrow{f_1} X_2 \xrightarrow{f_2} \cdots$$
be a tower of morphism in $\Der(R)$, indexed by natural numbers. The \EMP{Milnor colimit} of this tower, denoted $\Mcolim_{n \geq 0}X_n$, is defined by the following triangle
$$\bigoplus_{n \geq 0} X_n \xrightarrow{1-(f_n)} \bigoplus_{n \geq 0}X_n \rightarrow \Mcolim_{n \geq 0}X_n \xrightarrow{}\bigoplus_{n \geq 0} X_n[1].$$
Dually, if 
$$\cdots \xrightarrow{f_2} X_2 \xrightarrow{f_1} X_1 \xrightarrow{f_0} X_0$$
is an inverse tower of morphisms in $\Der(R)$, the \EMP{Milnor limit} is defined as an object fitting into the triangle
$$\prod_{n \geq 0}X_n[-1] \rightarrow \Mlim_{n \geq 0}X_n \rightarrow \prod_{n \geq 0} X_n \xrightarrow{1-(f_n)} \prod_{n \geq 0}X_n.$$
Directly from the definition, one can check the quasi-isomorphism 
\begin{equation}\label{E:rhomlim}
\RHom_R(\Mcolim_{n \geq 0}X_n,Y) \simeq \Mlim_{n \geq 0} \RHom_R(X_n,Y)\end{equation}
 for any $Y \in \Der(R)$.
Milnor limits and colimits allow to reconstruct any complex $X$ from its brutal truncations, that is, there are isomorphisms in $\Der(R)$:
\begin{equation}\label{E:brutal}
X \simeq \Mcolim_{n \geq 0} \sigma^{>-n} X \text{   and   } X \simeq \Mlim_{n \geq 0} \sigma^{<n} X,
\end{equation}
where the maps in the towers are the natural ones (see e.g. \cite[Lemma 6.3, Theorem 10.2]{Kel}).

Now we address the more general notion (in a sense) of a homotopy colimit. The definition of a homotopy colimit can vary a lot depending on how general categories one considers. In our case of the derived category of a ring, we can afford a very explicit definition using derived functors. To link this with the more general theory of Grothendieck derivators, we refer the reader to \cite[\S5]{Sto} and the references therein. Let $I$ be a small category, and let $\ModR^I$ be the (Grothendieck) category of all $I$-shaped diagrams, that is functors from $I$ to $\ModR$. There is a natural equivalence between $\mathbf{C}(\ModR^I)$ and $\mathbf{C}(\ModR)^I$, and therefore we can consider $\Der(\ModR^I)$ as the derived category of $I$-shaped diagrams of complexes over $R$. Given such a diagram $X_i, i \in I$, its \EMP{homotopy colimit} is defined as $\hocolim_{i \in I} X_i := \mathbf{L}\colim_{i \in I} X_i$, where $\mathbf{L}\colim_{i \in I}: \Der(\ModR^I) \rightarrow \Der(R)$ is the left derived functor of the colimit functor.

Note that if $I$ is a \EMP{directed} small category, then $\colim_{i \in I} = \varinjlim_{i \in I}$ is an exact functor, and therefore $\hocolim_{i \in I} X_i \simeq \varinjlim_{i \in I} X_i$  (see \cite[Proposition 6.6]{Sto} for details). In this situation we talk about a \EMP{directed homotopy colimit}, and use the notation $\dhocolim_{i \in I} X_i$. Also, the exactness of the direct limit functors also yields $H^n(\dhocolim_{i \in I} X_i) \simeq \varinjlim_{i \in I}H^n(X_i)$ for any $n \in \mathbb{Z}$.

To see how the two constructions are related, consider $(X_n \mid n \geq 0) \in \Der(\ModR)^\omega$, a directed diagram of shape $\omega=(0 \rightarrow 1 \rightarrow 2 \rightarrow \cdots)$ inside the derived category. By \cite[Proposition 11.3 1)]{Kel}, there exists an object $(Y_n \mid n \geq 0)$ inside $\Der(\ModR^\omega)$, such that its natural interpretation in $\Der(\ModR)^\omega$ is the original diagram $(X_n \mid n \geq 0) \in \Der(\ModR)^\omega$ (the terminology here is that the \EMP{incoherent} diagram $(X_n \mid n \geq 0)$ lifts to a \EMP{coherent} diagram $(Y_n \mid n \geq 0)$). Then $\dhocolim_{n \geq 0}Y_n$ is quasi-isomorphic to $\Mcolim_{n \geq 0} X_n $, see \cite[Proposition 11.3 3)]{Kel}. In this way, Milnor colimits can be seen as particular cases of homotopy colimits, up to a (non-canonical) quasi-isomorphism.
\subsection{Homotopically smashing t-structures}
We call a t-structure $(\mathcal{U},\mathcal{V})$ \EMP{homotopically smashing} if the coaisle $\mathcal{V}$ is closed under taking \EMP{directed} homotopy colimits, that is, if for any directed diagram $(X_i \mid i \in I) \in \Der(\ModR^I)$ such that $X_i \in \mathcal{V}$ for all $i \in I$ we have $\dhocolim_{i \in I} X_i \in \mathcal{V}$. We remark some relations between t-structures and homotopy colimits:
\begin{prop}\label{P:properties}
	\begin{enumerate}
		\item[(i)] Any aisle of $\Der(R)$ is closed under arbitrary homotopy colimits, and any coaisle of $\Der(R)$ is closed under Milnor limits. 
		\item[(ii)] Any compactly generated t-structure is homotopically smashing.
	\end{enumerate}
\end{prop}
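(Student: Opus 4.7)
The plan is to deduce both statements from standard closure properties of aisles and coaisles. For any t-structure $(\mathcal{U},\mathcal{V})$, the aisle $\mathcal{U}$ is closed under coproducts and the coaisle $\mathcal{V}$ under products, by the $\Hom$-adjunctions converting coproducts/products in the first/second argument; both are also closed under extensions. Rotating the triangle $A \xrightarrow{f} B \to \Cone(f) \to A[1]$ to $B \to \Cone(f) \to A[1] \to B[1]$ and combining $\mathcal{U}[1]\subseteq\mathcal{U}$ with extension-closedness shows that $\mathcal{U}$ is closed under cones of morphisms between its objects.

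For (i), the Milnor colimit case is then immediate from the defining triangle, which identifies $\Mcolim_{n \geq 0} X_n$ (for $X_n \in \mathcal{U}$) with the cone of a morphism between coproducts in $\mathcal{U}$. For a general homotopy colimit $\hocolim_{i \in I} X_i$ with values in $\mathcal{U}$, I would appeal to \cite[\S 5]{Sto} and \cite[Proposition 11.3]{Kel} to express it, via a bar-type simplicial resolution of the $I$-shaped diagram, as a Milnor colimit of iterated coproducts of the $X_i$; closure of $\mathcal{U}$ under coproducts, shifts, and Milnor colimits then yields $\hocolim_{i\in I} X_i \in \mathcal{U}$. For the coaisle and Milnor limits, given $X_n \in \mathcal{V}$, I would apply $\Hom_{\Der(R)}(U,-)$ for $U \in \mathcal{U}$ to the rotated defining triangle
$$\Mlim_{n \geq 0} X_n \to \prod_{n \geq 0} X_n \xrightarrow{1-(f_n)} \prod_{n \geq 0} X_n \to \Mlim_{n \geq 0} X_n[1]$$
and extract the piece $\Hom(U, \prod X_n[-1]) \to \Hom(U, \Mlim X_n) \to \Hom(U, \prod X_n)$ of the long exact sequence. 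The right term vanishes as $\prod X_n \in \mathcal{V}$, and the left equals $\Hom(U[1], \prod X_n) = 0$ since $U[1] \in \mathcal{U}$, so $\Mlim X_n \in \mathcal{V}$.

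For (ii), write $\mathcal{V} = \mathcal{C}\Perp{\leq 0}$ with $\mathcal{C} \subseteq \Der(R)^c$, and let $(X_i \mid i \in I)$ be a directed diagram in $\mathcal{V}$. For $C \in \mathcal{C}$ and $n \leq 0$, the perfectness of $C$ — and hence commutation of $\Hom_{\Der(R)}(C,-)$ with coproducts — together with the Milnor-colimit description of directed hocolims via a coherent lift, gives the commutation of $\Hom_{\Der(R)}(C,-)$ with directed homotopy colimits. Thus
$$\Hom_{\Der(R)}(C, \dhocolim_{i \in I} X_i[n]) \cong \varinjlim_{i \in I} \Hom_{\Der(R)}(C, X_i[n]) = 0,$$
so $\dhocolim_{i \in I} X_i \in \mathcal{V}$, i.e.\ $(\mathcal{U},\mathcal{V})$ is homotopically smashing. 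The main obstacle is the reduction step in (i) expressing an arbitrary homotopy colimit as a Milnor colimit of coproducts: since the paper does not develop derivator machinery internally, one must lean on the cited references, the conceptual chain being coproducts and cones $\Rightarrow$ Milnor (co)limits $\Rightarrow$ arbitrary homotopy (co)limits.
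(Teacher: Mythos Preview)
Your proposal is correct and follows the same conceptual lines as the paper. The paper's own proof is terser: for aisles under arbitrary homotopy colimits it simply cites \cite[Proposition 4.2]{SSV}, and for (ii) it cites \cite[Proposition 5.4]{SSV} (compact objects are exactly those for which $\Hom_{\Der(R)}(S,-)$ commutes with all directed homotopy colimits). For coaisles under Milnor limits, the paper gives the one-line argument from closure under products, cosuspensions, and extensions, which is the same as (and slightly slicker than) your long-exact-sequence computation.

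The place where your write-up diverges is the attempt to internalize the ``arbitrary hocolim'' step via a bar/simplicial resolution rather than citing \cite{SSV}. The idea is right --- the simplicial replacement has levels that are coproducts of the $X_i$, and its realization is a Milnor colimit of skeleta built by cofiber sequences involving only non-negative suspensions, so everything stays in $\mathcal{U}$. But the references you point to (\cite[\S 5]{Sto}, \cite[Proposition 11.3]{Kel}) do not package this reduction directly; \cite[Proposition 11.3]{Kel} only handles $\omega$-shaped systems. Similarly, in (ii) your phrase ``Milnor-colimit description of directed hocolims'' glosses over the passage from $\omega$-shaped to arbitrary directed shapes. None of this is a genuine error --- the underlying facts are standard --- but if you want to avoid the \cite{SSV} citations you should either spell out the bar-resolution argument more carefully or cite a source that states it in the needed form.
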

\begin{proof}
		\item[(i)]Any aisle is closed under homotopy colimits by \cite[Proposition 4.2]{SSV}. The closure of coaisles under Milnor limits is clear from them being closed under extensions, cosuspensions, and direct products. 

		\item[(ii)] This follows from \cite[Proposition 5.4]{SSV}, which implies that compact objects are precisely the objects $S$ with the property that 
		$$\Hom_{\Der(R)}(S,\dhocolim_{i \in I} X_i) \simeq \varinjlim_{i \in I} \Hom_{\Der(R)}(S,X_i)$$
		for any $(X_i \mid i \in I) \in \Der(\ModR^I)$.
\end{proof}

\subsection{Rigidity of aisles and coaisles}
	The following interplay between the rigid symmetric monoidal structure given by the derived tensor product on $\Der(R)$ and the t-structures will be essential in our endeavour.
	\begin{prop}\label{P:rigid}
		Let $(\mathcal{U},\mathcal{V})$ be a t-structure in $\Der(R)$. Let $X$ be a complex such that $X \in \Der^{\leq 0}$. Then the following holds:
			\begin{enumerate}
					\item[(i)] for any $U \in \mathcal{U}$ we have $X \otimes_R^\mathbf{L} U \in \mathcal{U}$,
					\item[(ii)] for any $V \in \mathcal{V}$ we have $\RHom_R(X,V) \in \mathcal{V}$.
			\end{enumerate}
	\end{prop}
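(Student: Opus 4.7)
The plan is to prove (i) first by reducing to brutal truncations of a K-projective resolution of $X$, and then derive (ii) from (i) by tensor-hom adjunction. I first note the closure properties of the aisle $\mathcal{U}$: since $\mathcal{U} = \Perp{0}\mathcal{V}$, it is closed under arbitrary coproducts; by the torsion pair axioms it is closed under direct summands and under cones (extensions); by the t-structure axiom $\mathcal{U}[1] \subseteq \mathcal{U}$ it is closed under non-negative shifts; and consequently it is closed under Milnor colimits, which are built from a triangle involving coproducts. Given $X \in \Der^{\leq 0}$, I would replace $X$ by a quasi-isomorphic bounded above complex $P$ of projective modules with $P^i = 0$ for $i > 0$; such $P$ is K-flat, so $X \otimes_R^\mathbf{L} U \simeq P \otimes_R U$. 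Applying the reconstruction formula~(\ref{E:brutal}) and using that the triangulated, coproduct-preserving functor $- \otimes_R U$ commutes with Milnor colimits gives
$$X \otimes_R^\mathbf{L} U \simeq \Mcolim_{n \geq 0}(\sigma^{>-n} P \otimes_R U),$$
so by Milnor-colimit closure of $\mathcal{U}$ it suffices to show $\sigma^{>-n} P \otimes_R U \in \mathcal{U}$ for each $n \geq 0$.

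I would argue this by induction on $n$, with $n=0$ giving the zero complex. For the inductive step, the short exact sequence of bounded complexes of projectives
$$0 \longrightarrow \sigma^{>-n+1}P \longrightarrow \sigma^{>-n}P \longrightarrow P^{-n+1}[n-1] \longrightarrow 0,$$
in which $P^{-n+1}[n-1]$ denotes the module $P^{-n+1}$ placed in degree $-n+1$, induces a distinguished triangle in $\Der(R)$. After tensoring with $U$, the object $\sigma^{>-n}P \otimes_R U$ is exhibited as an extension of $(P^{-n+1} \otimes_R U)[n-1]$ by $\sigma^{>-n+1}P \otimes_R U \in \mathcal{U}$ (inductive hypothesis). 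Since $P^{-n+1}$ is projective, it is a direct summand of some free module $R^{(I)}$, so $P^{-n+1} \otimes_R U$ is a direct summand of $U^{(I)} \in \mathcal{U}$ and hence lies in $\mathcal{U}$ by summand closure; the shift by $n-1 \geq 0$ preserves $\mathcal{U}$, and closure under extensions completes the step.

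For (ii), the tensor-hom adjunction provides, for any $U \in \mathcal{U}$ and $V \in \mathcal{V}$,
$$\Hom_{\Der(R)}(U, \RHom_R(X,V)) \simeq \Hom_{\Der(R)}(X \otimes_R^\mathbf{L} U, V),$$
and the right-hand side vanishes by (i) together with $\mathcal{V} = \mathcal{U}\Perp{0}$. Therefore $\RHom_R(X,V) \in \mathcal{U}\Perp{0} = \mathcal{V}$. I do not foresee a genuine obstacle; the most delicate point is the shift-convention bookkeeping in the brutal-truncation triangle, while the remainder of the argument amounts to a routine combination of closure properties of aisles with the calculus of derived functors.
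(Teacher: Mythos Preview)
Your proof is correct and follows the same strategy as the paper's---replace $X$ by a bounded-above complex of projectives (the paper uses free modules), handle the bounded case by induction on brutal truncations, and pass to the unbounded case via Milnor (co)limits---except that you establish (i) first and then deduce (ii) from the derived tensor--hom adjunction, whereas the paper proves (ii) directly using Milnor limits and closure of $\mathcal{V}$ under products, and simply declares (i) analogous. This is a small but genuine economy: your route avoids repeating the dual argument, at the cost of requiring the adjunction $\Hom_{\Der(R)}(U,\RHom_R(X,V))\simeq\Hom_{\Der(R)}(X\otimes_R^{\mathbf L}U,V)$, which is standard. One minor wording issue: the functor you call ``$-\otimes_R U$'' is not literally a triangulated endofunctor of $\Der(R)$; what you are using is that $-\otimes_R^{\mathbf L}U$ preserves triangles and coproducts (hence Milnor colimits), together with $\sigma^{>-n}P\otimes_R^{\mathbf L}U\simeq\sigma^{>-n}P\otimes_R U$ since $\sigma^{>-n}P$ is a bounded complex of projectives.
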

	\begin{proof}
			We prove just $(ii)$, as $(i)$ follows by the same (in fact, simpler) argument (and is essentially proved in \cite[Corollary 5.2]{AJS2}).

			Fix a complex $V \in \mathcal{V}$. First note that, since $\mathcal{V}$ is closed under cosuspensions, $\RHom_R(R[i],V) \simeq V[-i] \in \mathcal{V}$ for any $i \geq 0$. Next, let $F \simeq R^{(\varkappa)}$ be a free $R$-module. Then $\RHom_R(F[i],V) \simeq \RHom_R(R[i],V)^\varkappa \in \mathcal{V}$ for any $i \geq 0$, as $\mathcal{V}$ is closed under direct products. Because $\mathcal{V}$ is extension-closed, it follows that $\RHom_R(X,V) \in \mathcal{V}$ whenever $X$ is a bounded complex of free $R$-modules concentrated in non-positive degrees. 

			Finally, let $X$ be any complex from $\Der^{\leq 0}$. By quasi-isomorphic replacement, we can without loss of generality assume that $X$ is a complex of free $R$-modules concentrated in non-positive degrees. Express $X$ as a Milnor colimit of its brutal truncations from below, $X = \Mcolim_{n \geq 0} \sigma^{>-n}X$ (see \ref{E:brutal}). Since $\sigma^{>-n}X$ is a bounded complex of free modules concentrated in non-positive degrees, $\RHom_R(\sigma^{>-n}X,V) \in \mathcal{V}$ by the previous paragraph. We compute, using (\ref{E:rhomlim}):
			$$\RHom_R(X,V) \simeq \RHom_R(\Mcolim_{n \geq 0}\sigma^{>-n}X,V) \simeq \Mlim_{n \geq 0} \RHom_R(\sigma^{>-n}X,V),$$
			and infer that $\RHom_R(X,V) \in \mathcal{V}$, as $\mathcal{V}$ is closed under Milnor limits.
	\end{proof}
	\subsection{Hereditary torsion pairs and the Thomason sets}
	We will also need to recall the notion of a torsion pair in the module category $\ModR$. A pair of subcategories $(\mathcal{T},\mathcal{F})$ of $\ModR$ is called a \EMP{torsion pair} if for any $M \in \ModR$:
	\begin{itemize}
		\item[(i)] $\Hom_R(\mathcal{T},M)=0$ if and only $M \in \mathcal{F}$, and
		\item[(ii)] $\Hom_R(M,\mathcal{F})=0$ if and only $M \in \mathcal{T}$.
	\end{itemize}
		A subcategory $\mathcal{T}$ of $\ModR$ belongs to some torsion pair $(\mathcal{T},\mathcal{F})$ if and only if it is closed under coproducts, extensions, and epimorphic images, and we call such a class a \EMP{torsion class}. Dually, we call a subcategory $\mathcal{F}$ closed under products, extensions, and submodules a \EMP{torsion-free class}. We say that a torsion pair $(\mathcal{T},\mathcal{F})$ is \EMP{hereditary}, if $\mathcal{T}$ is closed under submodules (or equivalently, $\mathcal{F}$ is closed under injective envelopes). As a shorthand, we say that $\mathcal{T}$ is a \EMP{hereditary torsion class} if it belongs as a torsion class into a hereditary torsion pair. In this situation, the torsion pair is fully determined by the cyclic modules in $\mathcal{T}$. A hereditary torsion pair $(\mathcal{T},\mathcal{F})$ is said to be \EMP{of finite type} if $\mathcal{F}$ is further closed under direct limits. This corresponds to the situation in which the torsion pair is determined by finitely presented cyclic objects in $\mathcal{T}$. This last statement can be made precise by tying such torsion pairs to certain data of geometrical flavour --- the Thomason subsets of the Zariski spectrum.

		Let $R$ be a commutative ring. A subset $X$ of the Zariski spectrum $\Spec(R)$ is called \EMP{Thomason (open)} if there is a family $\mathcal{I}$ of finitely generated ideals of $R$ such that $X = \bigcup_{I \in \mathcal{I}} V(I)$, where $V(I) = \{\pp \in \Spec(R) \mid I \subseteq \pp\}$ is the basic Zariski-closed set on $I$. The reason we used the parenthesized word ``open'' is that the Thomason sets are precisely the open subsets of the topological space $\Spec(R)$ endowed with the Hochster dual topology to the Zariski topology (for more details, see e.g. \cite[\S 2]{HS}). Their name comes from the Thomason's paper \cite{T}, in which he proved a bijective correspondence between Thomason sets and thick subcategories of $\Der^c(R)$. The following result appeared in \cite{GP}:
		\begin{thm}\label{T:thomasonpairs}\emph{(\cite[Theorem 2.2]{GP})}
			Let $R$ be a commutative ring. There is a 1-1 correspondence 
				$$\left \{ \begin{tabular}{ccc} \text{ Thomason subsets $X$} \\ \text{of $\Spec(R)$} \end{tabular}\right \}  \leftrightarrow  \left \{ \begin{tabular}{ccc} \text{Hereditary torsion pairs} \\ \text{$(\mathcal{T},\mathcal{F})$ of finite type in $\ModR$} \end{tabular}\right \},$$
				given by the assignment
				$$X \mapsto (\mathcal{T}_X,\mathcal{F}_X),$$
				where $\mathcal{T}_X = \{M \in \ModR \mid \Supp(M) \subseteq X\}$.
		\end{thm}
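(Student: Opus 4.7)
The strategy is to construct an explicit inverse to the given assignment and verify both compositions are the identity. Given a hereditary torsion pair $(\mathcal{T}, \mathcal{F})$ of finite type in $\ModR$, one sets
$$X_\mathcal{T} = \bigcup \{V(I) \mid I \text{ a finitely generated ideal of } R \text{ with } R/I \in \mathcal{T}\},$$
which is manifestly a Thomason subset of $\Spec(R)$. The proof then splits into: (a) verifying that the forward assignment $X \mapsto (\mathcal{T}_X,\mathcal{F}_X)$ is well-defined, and (b) checking that $X_{\mathcal{T}_X}=X$ and $\mathcal{T}_{X_\mathcal{T}}=\mathcal{T}$.

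For well-definedness of the forward assignment, that $\mathcal{T}_X$ is a hereditary torsion class is immediate from the familiar behavior of support: $\Supp$ is monotone on submodules, additive along short exact sequences, and commutes with coproducts, so $\mathcal{T}_X$ inherits closure under coproducts, extensions, quotients, and submodules. The delicate point is to show that the resulting torsion pair is of finite type, equivalently that the Gabriel filter $\{J \triangleleft R \mid V(J) \subseteq X\}$ has a basis consisting of finitely generated ideals. I would approach this by writing $X=\bigcup_\alpha V(I_\alpha)$ with each $I_\alpha$ finitely generated, forming the hereditary torsion class $\mathcal{T}'$ generated by $\{R/I_\alpha\}_\alpha$ (which is of finite type by construction because each $R/I_\alpha$ is finitely presented), and reducing the equality $\mathcal{T}_X=\mathcal{T}'$ to cyclic modules: since every module is a directed union of its cyclic submodules and $\mathcal{T}'$ is closed under directed unions, the task becomes showing that $R/J \in \mathcal{T}'$ whenever $V(J) \subseteq X$.

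For the bijectivity in (b): $X_{\mathcal{T}_X} \supseteq X$ is immediate from the Thomason presentation of $X$ (each $V(I_\alpha) \subseteq X_{\mathcal{T}_X}$ because $R/I_\alpha \in \mathcal{T}_X$), and $X_{\mathcal{T}_X} \subseteq X$ holds because any f.g.\ ideal $I$ with $R/I \in \mathcal{T}_X$ satisfies $V(I)=\Supp(R/I) \subseteq X$. The identity $\mathcal{T}_{X_\mathcal{T}}=\mathcal{T}$ in turn uses finite type essentially: the inclusion $\mathcal{T} \subseteq \mathcal{T}_{X_\mathcal{T}}$ holds because for any $M \in \mathcal{T}$ and any $\pp \in \Supp(M)$, one produces, by reducing to a cyclic submodule of $M$ and invoking finite type, a finitely generated ideal $I \subseteq \Ann(m)$ with $R/I \in \mathcal{T}$, so that $\pp \in V(I) \subseteq X_\mathcal{T}$; conversely, $\mathcal{T}_{X_\mathcal{T}} \subseteq \mathcal{T}$ follows by the same cyclic-reduction argument applied to $\mathcal{T}_{X_\mathcal{T}}$ and using that $\mathcal{T}$ is itself closed under all hereditary torsion operations.

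The main obstacle is the finite-type step in part (a), i.e.\ establishing that every $R/J$ with $V(J)\subseteq X$ belongs to $\mathcal{T}'$. In the noetherian setting this is essentially trivial because $\Spec(R)\setminus V(J)$ is a finite union of quasi-compact opens of the appropriate shape. In the general commutative case, the Zariski complement of $V(J)$ is not quasi-compact and one cannot pass through point-set compactness on $\Spec(R)$. The key move is to work inside the module category: using that the Gabriel filter associated to $\mathcal{T}'$ is generated by the $I_\alpha$ and is closed under the operation $J\mapsto (J:r)$, one argues that every element $r+J \in R/J$ has its cyclic submodule annihilated by some finitely generated $I$ with $V(I) \subseteq X$, so that $R/J$ is exhibited as a directed union of submodules built by extensions of quotients of $R/I_\alpha$'s, and hence lies in $\mathcal{T}'$.
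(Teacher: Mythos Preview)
The paper does not prove this theorem; it is quoted verbatim from \cite[Theorem 2.2]{GP} and used as a black box. There is therefore no proof in the paper to compare your attempt against.

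That said, a remark on your sketch. Your overall strategy is the standard one, and you correctly isolate the only nontrivial point: showing that $\mathcal{T}_X$ is of finite type, equivalently that for every ideal $J$ with $V(J)\subseteq X$ there is a finitely generated $I\subseteq J$ with $V(I)\subseteq X$. However, your proposed resolution in the final paragraph is circular. Passing from $R/J$ to the cyclic submodule generated by $r+J$ replaces $J$ by $(J:r)\supseteq J$, and you are then asking exactly the same question for $(J:r)$ that you started with for $J$; the closure of the Gabriel filter under $J\mapsto (J:r)$ goes in the wrong direction to help here. What is actually needed is a compactness argument: in the constructible topology on $\Spec(R)$ the set $V(J)$ is closed (hence compact, the space being compact Hausdorff) and each $V(I_\alpha)$ is open, so the cover $V(J)\subseteq\bigcup_\alpha V(I_\alpha)$ admits a finite subcover, giving finitely many $I_{\alpha_1},\ldots,I_{\alpha_n}$ with $V(J)\subseteq\bigcup_i V(I_{\alpha_i})$ and hence $I_{\alpha_1}\cdots I_{\alpha_n}\subseteq\sqrt{J}$. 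Equivalently, one can run a direct Zorn's lemma argument: if no finite product of the $I_\alpha$ lies in $\sqrt{J}$, a maximal ideal with this property above $J$ is prime and avoids every $I_\alpha$, contradicting $V(J)\subseteq X$.
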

	\section{Bounded below compactly generated t-structures}
	The goal of this section is to establish a bijective correspondence between the compactly generated t-structures over an arbitrary commutative ring $R$ and infinite decreasing sequences of Thomason subsets of $\Spec(R)$, with the additional assumption that the t-structures are cohomologically bounded from below. As Theorem~\ref{T:thomasonpairs} suggests, an important role is played by hereditary torsion pairs --- we show that the aisle of any bounded below compactly generated t-structure is determined cohomology-wise by a decreasing sequence of hereditary torsion classes. We start with simple, but key observations about injective modules.
\begin{lem}\label{L:injdual}
	Let $P$ be a projective module over a commutative ring $R$.
\begin{enumerate}
	\item[(i)] If $E$ is an injective module, then $\Hom_R(P,E)$ is also an injective $R$-module.
	\item[(ii)] If $P$ is finitely generated, and $\iota_M: M \rightarrow E(M)$ is an injective envelope of $M$, then $\Hom_R(P,\iota_M)$ is an injective envelope of $\Hom_R(P,M)$.
\end{enumerate}
\begin{proof}
	\begin{enumerate}
		\item[(i)] The Hom-$\otimes$ adjunction yields a natural isomorphism
			$$\Hom_R(-,\Hom_R(P,E)) \simeq \Hom_R(- \otimes_R P, E).$$
			Since the latter functor is an exact functor $\ModR \rightarrow \ModR$, we conclude that $\Hom_R(P,E)$ is injective as an $R$-module.

	\item[(ii)] As $\Hom_R(R^{(n)},\iota_M) \simeq \iota_M^{(n)}: M^{(n)} \rightarrow E(M)^{(n)}$ as maps of $R$-modules, the result is clear from \cite[Proposition 6.16(2)]{AF} in the case when $P$ is free. The rest follows by considering $P$ as a direct summand in $R^{(n)}$ for some $n>0$.
	\end{enumerate}
\end{proof}
\end{lem}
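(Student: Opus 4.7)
My plan is to handle the two parts separately, with part (i) being a routine adjunction argument and part (ii) requiring a reduction from finitely generated projectives to free modules of finite rank.

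For part (i), I would invoke the Hom-tensor adjunction to obtain the natural isomorphism of functors $\Hom_R(-, \Hom_R(P, E)) \simeq \Hom_R(- \otimes_R P, E)$. The right-hand side is a composition of two exact functors: $- \otimes_R P$ is exact since $P$ is projective and hence flat, and $\Hom_R(-, E)$ is exact since $E$ is injective. Therefore the left-hand side is exact, which by definition means $\Hom_R(P, E)$ is injective.

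For part (ii), I would first observe that $\Hom_R(P, E(M))$ is injective by (i), and that applying the left-exact functor $\Hom_R(P, -)$ to the monomorphism $\iota_M$ yields a monomorphism $\Hom_R(P, \iota_M): \Hom_R(P, M) \hookrightarrow \Hom_R(P, E(M))$. It remains to verify essentiality. I would first dispose of the case $P = R^n$: here $\Hom_R(R^n, -)$ is naturally isomorphic to the $n$-fold direct sum functor, so $\Hom_R(R^n, \iota_M)$ becomes $\iota_M^{(n)}: M^{(n)} \to E(M)^{(n)}$, and a finite direct sum of injective envelopes is an injective envelope (this is the content of Anderson-Fuller 6.16(2)). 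For a general finitely generated projective $P$, choose a complement $Q$ with $P \oplus Q \cong R^n$. Then the monomorphism $\Hom_R(R^n, \iota_M)$ decomposes as the direct sum of $\Hom_R(P, \iota_M)$ and $\Hom_R(Q, \iota_M)$; since an essential monomorphism into a direct sum restricts to an essential monomorphism on each summand (any nonzero element of the summand gives a nonzero element of the full direct sum, which admits a nonzero multiple into the image, and that multiple necessarily lies in the corresponding summand of the image), we conclude $\Hom_R(P, \iota_M)$ is essential.

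The only potentially tricky step is the passage from the free case to the general finitely generated projective case in (ii), namely justifying that essentiality of $A_1 \oplus A_2 \hookrightarrow B_1 \oplus B_2$ forces essentiality of $A_i \hookrightarrow B_i$. This is a direct diagram chase using the fact that the direct sum decomposition on source and target is compatible, so no real obstacle arises. All other steps are standard applications of adjunction and exactness.
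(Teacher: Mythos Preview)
Your proposal is correct and follows essentially the same approach as the paper's own proof: both parts use the Hom--tensor adjunction for (i), and for (ii) both reduce first to the free case $P=R^{n}$ via the cited result from Anderson--Fuller, then handle the general finitely generated projective as a direct summand. Your write-up simply fills in more detail on the essentiality step in the passage from free to projective, which the paper leaves implicit.
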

\begin{lem}\label{L:stalkinj}
	Let $E$ be an injective $R$-module, and $X$ a complex. Then there is an isomorphism 
	$$\varphi: \Hom_{\Der(R)}(X,E[-n]) \simeq \Hom_R(H^n(X),E),$$
	for any $n \in \mathbb{Z}$, defined by the rule $\varphi(f) = H^n(f)$.
\end{lem}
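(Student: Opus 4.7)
The plan is to reduce the computation of $\Hom_{\Der(R)}(X,E[-n])$ to the homotopy category via K-injectivity of $E[-n]$, and then describe morphisms to a stalk complex modulo homotopy by their single nonzero component.

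First I would observe that $E[-n]$, viewed as a stalk complex in cohomological degree $n$, is a bounded complex of injective modules, hence K-injective. Therefore
$$\Hom_{\Der(R)}(X,E[-n]) \simeq \Hom_{\Htpy(R)}(X,E[-n]),$$
and without loss of generality we may compute on the right-hand side using any representative of $X$ in $\Ch(\ModR)$.

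Next I would analyze chain maps $f \colon X \to E[-n]$ directly: such a map is determined by its $n$-th component $f^n \colon X^n \to E$, since all other components must vanish. The only nontrivial cocycle condition is $f^n \circ d_X^{n-1} = 0$, which is equivalent to $f^n$ factoring through $X^n / B^n(X)$, yielding an induced map $\bar{f} \colon X^n/B^n(X) \to E$. A null-homotopy is a single map $h \colon X^{n+1} \to E$ with $f^n = h \circ d_X^n$; this holds exactly when $f^n$ vanishes on $Z^n(X)$, that is, when $\bar{f}$ vanishes on the submodule $H^n(X) = Z^n(X)/B^n(X)$ of $X^n/B^n(X)$. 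Consequently, the restriction to $H^n(X)$ descends to a well-defined \emph{injective} map
$$\varphi \colon \Hom_{\Htpy(R)}(X,E[-n]) \hookrightarrow \Hom_R(H^n(X),E),$$
which coincides with the assignment $f \mapsto H^n(f)$ by construction.

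For surjectivity I would invoke the injectivity of $E$: given any $g \colon H^n(X) \to E$, the inclusion $H^n(X) \hookrightarrow X^n/B^n(X)$ is a monomorphism, so $g$ extends to some $\bar{f} \colon X^n/B^n(X) \to E$; composing with the projection $X^n \twoheadrightarrow X^n/B^n(X)$ and placing this in degree $n$ provides a chain map $f \colon X \to E[-n]$ with $H^n(f) = g$. The argument is a direct calculation, and the only nonroutine ingredient is the injectivity of $E$, used precisely once (for surjectivity); the rest is bookkeeping with stalk complexes, so I do not expect a real obstacle.
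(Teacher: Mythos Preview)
Your argument is correct and follows essentially the same route as the paper's proof: reduce to the homotopy category via K-injectivity of $E[-n]$, then analyze chain maps and homotopies to a stalk complex directly. One small correction to your closing remark: injectivity of $E$ is not used ``precisely once.'' In the step where you assert that a null-homotopy $h\colon X^{n+1}\to E$ with $f^n = h\circ d_X^n$ exists \emph{exactly} when $f^n$ vanishes on $Z^n(X)$, the ``if'' direction requires extending the induced map $B^{n+1}(X)\to E$ along the inclusion $B^{n+1}(X)\hookrightarrow X^{n+1}$, which again uses that $E$ is injective (the paper makes this use explicit).
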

\begin{proof}
	Since $\varphi$ is clearly an $R$-homomorphism, it is enough to show that $\varphi$ is a bijection. If $g \in \Hom_R(H^n(X),E)$, then we can lift it first to a map $g': Z^n(X) \rightarrow X$ such that $g'_{\restriction{B^n(X)}} = 0$, and then to a map $g'': X^n \rightarrow E$ by injectivity of $E$. The map $g''$ clearly extends to a map of complexes $\tilde{g}:X \rightarrow E[-n]$ such that $H^n(\tilde{g}) = g$, proving that $\varphi$ is surjective.

	Suppose that $f,g \in \Hom_{\Der(R)}(X,E[-n])$ are two maps inducing the same map on $n$-th cohomology. Since $E$ is injective, $\Hom_{\Der(R)}(X,E[-n]) \simeq \Hom_{\Htpy(R)}(X,E[-n])$. Put $h = f-g \in \Hom_{\Htpy(R)}(X,E[-n])$, and note that $h^n_{\restriction_{Z^n(X)}} = 0$. Then $h$ factors through the differential $d_X^n$. Therefore, by injectivity of $E$, there is a homotopy map $s: X^{n+1} \rightarrow E$ such that $h^n = s d_X^n$. It follows that $h$ is zero in $\Hom_{\Htpy(R)}(X,E[-n])$, and therefore $f=g$ in $\Hom_{\Der(R)}(X,E[-n])$, establishing that $\varphi$ is injective.
\end{proof}
The following result is crucial to this section, as it will allow us to replace bounded below complexes in compactly generated coaisles by collections of stalks of injective $R$-modules. Let $\projR$ denote the subcategory of all finitely generated projective $R$-modules.
\begin{lem}\label{L:injenvelope}
		Let $S \in \mathbf{K}^{-}(\projR)$ and $X \in \Der^{\geq k}$ for some $k \in \mathbb{Z}$. If $X \in S\Perp{\leq 0}$, then the stalk complex $E(H^k(X))[-k]$ belongs to $S\Perp{\leq 0}$.
\end{lem}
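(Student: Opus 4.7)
My strategy is to translate the goal via Lemma~\ref{L:stalkinj}, construct a canonical map $X \to E(H^k(X))[-k]$ and form its mapping cone triangle, and then address the resulting cohomological vanishing condition using the minimal K-injective resolution of $X$.

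By Lemma~\ref{L:stalkinj}, the goal $E(H^k(X))[-k] \in S\Perp{\leq 0}$ is equivalent to $\Hom_R(H^l(S), E(H^k(X))) = 0$ for every $l \geq k$, which by essentiality of the injective envelope further reduces to: no nonzero submodule $N' \subseteq H^l(S)$ admits a nonzero map into $H^k(X)$. To relate this to the hypothesis, the plan is to apply Lemma~\ref{L:stalkinj} in the reverse direction: the identification $\Hom_{\Der(R)}(X, E(H^k(X))[-k]) \cong \Hom_R(H^k(X), E(H^k(X)))$ produces a morphism $\tilde\iota \colon X \to Y := E(H^k(X))[-k]$ corresponding to the injective-envelope inclusion $\iota \colon H^k(X) \hookrightarrow E(H^k(X))$. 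Completing $\tilde\iota$ to a distinguished triangle $X \xrightarrow{\tilde\iota} Y \to C \to X[1]$, a cohomology calculation gives $C \in \Der^{\geq k}$ with $H^{k-1}(C) = 0$, with $H^k(C)$ fitting in a short exact sequence $0 \to E(H^k(X))/H^k(X) \to H^k(C) \to H^{k+1}(X) \to 0$, and $H^j(C) \cong H^{j+1}(X)$ for $j > k$.

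Applying $\Hom_{\Der(R)}(S, -)$ to this triangle and invoking $X \in S\Perp{\leq 0}$ yields an inclusion $\Hom(S, Y[n]) \hookrightarrow \Hom(S, C[n])$ for every $n \leq 0$, so the problem reduces to showing $\Hom(S, C[n]) = 0$ for $n \leq 0$. This is the main obstacle: $C$ still lies in $\Der^{\geq k}$, but its bottom cohomology has properly grown by $E(H^k(X))/H^k(X)$, so a naive iteration on $C$ does not shrink the problem. To overcome this, my plan is to pass to a minimal K-injective resolution $X \simeq I$ (with $I^j = 0$ for $j < k$ and $I^k = E(H^k(X))$), observe that $Y = I^k[-k]$ fits in the brutal-truncation triangle $\sigma^{>k}I \to I \to I^k[-k] \to \sigma^{>k}I[1]$, and exploit the injective layers of $\sigma^{>k}I$. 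Since $\sigma^{>k}I$ is a bounded-below complex of injectives and $S \in \Htpy^{-}(\projR)$ has finitely generated projective components, Lemma~\ref{L:injdual} makes each $\Hom_R(S^i, I^j)$ injective degree by degree, suggesting an inductive ``peeling-off'' argument on the injective layers that controls $\Hom_{\Der(R)}(S, \sigma^{>k}I[m])$ in terms of the hypothesis on $X$. The delicate point, which this layered argument must compensate for, is that any naive chain-level realization of a putative nonzero $\phi \colon H^l(S) \to E(H^k(X))$ lands in $E(H^k(X))$ rather than in $H^k(X)$; this is precisely why the direct construction of a contradictory chain map $S \to X[k-l]$ does not close the argument without the additional injective-layer bookkeeping.
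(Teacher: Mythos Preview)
Your reduction via Lemma~\ref{L:stalkinj} is correct, and so is the identification of the cone of $\tilde\iota$ with $(\sigma^{>k}I)[1]$ for an injective resolution $I$ of $X$ with $I^k=E(H^k(X))$. The difficulty is that the long exact sequence coming from the triangle $\sigma^{>k}I \to I \to I^k[-k]$ only gives, for each $m\geq 0$, an injection
\[
\Hom_{\Der(R)}(S[m],I^k[-k])\;\hookrightarrow\;\Hom_{\Der(R)}(S[m-1],\sigma^{>k}I),
\]
and for $m=0$ the target is $\Hom_{\Der(R)}(S[-1],\sigma^{>k}I)$, about which the hypothesis $X\in S\Perp{\leq 0}$ says nothing. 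Your ``peeling-off'' of the injective layers $I^{k+1},I^{k+2},\ldots$ runs into the same wall one step later: the brutal truncation triangle for $\sigma^{>k}I$ produces the term $\Hom_R(H^{k+m}(S),I^{k+1})$, and you have no control over $I^{k+1}$ (minimality of $I$ does not force $I^{k+1}$ to be built only from cohomology of $X$ in a way the hypothesis sees; part of $I^{k+1}$ is the injective envelope of the cokernel $E(H^k(X))/H^k(X)$). In short, the inductive scheme you sketch never returns to a statement about $X$ itself, so it cannot close. You in fact acknowledge precisely this obstruction in your last sentence, but the proposal does not explain how the ``injective-layer bookkeeping'' overcomes it; as written it does not.

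The paper's proof takes a genuinely different route. It does not analyze a resolution of $X$ at all; instead it runs a backwards induction on the degrees $n=N,N-1,\ldots,k$ of $S$, proving $\Hom_R(H^n(S),E)=0$. The inductive engine has two parts: first, a splitting claim showing that $\Hom_R(B^{n+1}(S),E)$ is injective (this is where Lemma~\ref{L:injdual}(i) is actually used, applied to the finitely generated projective terms $S^j$), so that the short exact sequence $0\to B_{n+1}^E\to(S_n/B_n)^E\to H_n^E\to 0$ splits; second, a diagram chase exploiting that, by Lemma~\ref{L:injdual}(ii), the map $\Hom_R(S^n,M)\hookrightarrow\Hom_R(S^n,E)$ is an injective envelope, hence essential. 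This essentiality is what converts a hypothetical nonzero element of $\Hom_R(H^n(S),E)$ into a genuine map $f\colon S^n\to M$ with $f\,d_S^{n-1}=0$ and $f|_{Z^n(S)}\neq 0$, which can then be extended to a non-nullhomotopic chain map $S[n-k]\to X$, contradicting $X\in S\Perp{\leq 0}$. The key idea you are missing is this use of essentiality at the level of $S^n$ (not of $H^k(X)$) together with the splitting, which is what allows one to land in $M$ rather than merely in $E$.
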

\begin{proof}
		Let $N$ be the largest integer such that the $N$-th coordinate of $S$ is non-zero, $S^N \neq 0$. Since $S$ is a bounded above complex of projectives, we may compute the homomorphisms starting in $S$ in the homotopy category, that is, $\Hom_{\Der(R)}(S,-) \simeq \Hom_{\Htpy(R)}(S,-)$. Therefore, if $N<k$, there is nothing to prove, so we may assume $N \geq k$. Further we proceed by backwards induction on $n=N,N-1,...,k$ and prove that $\Hom_{\Htpy(R)}(S[n-k],E(H^k(X))[-k])=0$. To save some ink, we fix the following notation:
		$$M:=H^k(X), E:=E(H^k(X)), H_n:=H^n(S), Z_n:=Z^n(S), B_n:=B^n(S), S_n := S^n$$
		$$(-)^E := \Hom_R(-,E), (-)^M := \Hom_R(-,M).$$
		By Lemma~\ref{L:stalkinj}, $\Hom_{\Htpy(R)}(S[n-k],E[-k]) \simeq \Hom_R(H_{n},E) = H_n^E$ for any $n$. Since $E$ is injective, the obviously obtained sequence
		\begin{equation}\label{EE11}
				0 \rightarrow B_{n+1}^E \rightarrow (S_{n}/B_{n})^E \rightarrow H_{n}^E \rightarrow 0
		\end{equation}
		is exact. 
		\begin{clm}
			The exact sequence (\ref{EE11}) is split.
		\end{clm}
		\begin{cprf}
		Consider the exact sequences
		$$0 \rightarrow B_{n+1+i}^E \rightarrow (S_{n+i}/B_{n+i})^E \rightarrow H_{n+i}^E \rightarrow 0$$
		$$0 \rightarrow (S_{n+i}/B_{n+i})^E \rightarrow S_{n+i}^E \rightarrow B_{n+i}^E \rightarrow 0$$
				for $i > 0$. By the induction hypothesis, $H_{n+i}^E =0$ for all $i>0$, and therefore $B_{n+1+i}^E \simeq (S_{n+i}/B_{n+i})^E$ for all $i>0$. Note that since $S_{N+1} = 0$, we have $B_{N+1}=0$. If $n=N$, this implies $B^E_{n+1} = 0$, and there is nothing to prove, so we may further assume that $N-n > 0$. Next we proceed by backwards induction on $i=N-n+1,N-n,\ldots,1$ to prove that $B_{n+i}$ is injective, and since the case $i=N-n+1$ is already done, we can assume $0<i \leq N-n$. Then the backwards induction hypothesis says that $B_{n+1+i}^E \simeq (S_{n+i}/B_{n+i})^E$ is injective, and therefore the second exact sequence above splits. This means that $B_{n+i}^E$ is a direct summand of $S_{n+i}^E$, which is injective by Lemma~\ref{L:injdual}, establishing the induction step. In particular, we showed that $B_{n+1}^E$ is injective, proving finally that (\ref{EE11}) is indeed split.
		\end{cprf}
		Now we are ready to prove that $H_n^E = 0$. Towards a contradiction, suppose that $H_n^E \neq 0$, and consider the following diagram, induced naturally by the Hom bifunctor:
		\begin{equation}\label{E:big}
		\begin{tikzcd}
				{} & S_{n-1}^M &  \arrow{l} Z_{n}^M  \arrow{dd}{h} & \arrow{l} H_n^M & \arrow{l} 0 \\
				S_{n-1}^M \arrow[equal]{ur} \arrow[dotted]{dd}[yshift=5ex]{\iota_{n-1}} &  \arrow{l} \arrow{ur}{g} \arrow[dotted]{dd}[yshift=5ex]{\iota_n} S_{n}^M \\
				{} & S_{n-1}^E  &  \arrow{l} Z_{n}^E & \arrow{l}[above]{l} H_n^E \arrow[shift left=2,hook]{dl}{i} & \arrow{l} 0  \\
				S_{n-1}^E \arrow[equal]{ur}  &  \arrow{l} \arrow{ur}{k} S_{n}^E & \arrow{l}{j} (S_n/B_n)^E \arrow[two heads, shift left=2]{ur}{\pi} & \arrow{l} 0 
		\end{tikzcd}
		\end{equation}
		By the left exactness and naturality of Hom-functors, all the rows of (\ref{E:big}) are exact and all the squares commute. Since $S_n$ and $S_{n-1}$ are finitely generated projective modules, and the ring $R$ is commutative, the front dotted vertical arrows are injective envelopes by Lemma~\ref{L:injdual}(ii). The map $\pi$ is a split epimorphism by the previous endeavour, with a section denoted in the diagram by $i$. Consider the submodule $(ji)[H_n^E]$ of $S_n^E$. By the essentiality of the vertical map $\iota_n$, the intersection $L=(ji)[H_n^E] \cap \iota_n[S_{n}^M]$ is non-zero. Choose an element $f \in S_n^M$ such that $\iota_n(f)$ is non-zero and belongs to $L$. Denote $\tilde{f}=g(f) \in Z_n^M$. We claim that $\tilde{f}$ is non-zero. Indeed, 
$$h(\tilde{f}) = hg(f) = k\iota_n(f),$$
		and because $\iota_n(f) \in L \subseteq (ji)[H_n^E]$, there is $f' \in H_n^E$ such that $\iota_n(f)=ji(f')$. Thus, we can compute further that
$$h(\tilde{f}) = k \iota_n(f) = kji(f') = l\pi i(f') = l(f') \neq 0,$$
using that $l$ is a monomorphism. Therefore, $\tilde{f}$ is non-zero as claimed.

		Note that by the commutativity of the squares of the diagram, $f$ is in the kernel of the map $S_n^M \rightarrow S_{n-1}^M$, since the vertical dotted map $\iota_{n-1}$ is injective. Therefore, $f d_S^{n-1} = 0$. Also, $\tilde{f}$ lies in the kernel of the map $Z_n^M \rightarrow S_{n-1}^M$, that is $\tilde{f}$ is a non-zero element of $H_n^M$. We proved that there is a map $f: S_n \rightarrow M$ such that it induces a non-zero map $\tilde{f}:H_n \rightarrow M$ on cohomology.

		Finally, we use this to infer that $\Hom_{\Htpy(R)}(S[n-k],X) \neq 0$, which is the desired contradiction. Indeed let us define a map $\varphi: S[n-k] \rightarrow X$ by first letting $\varphi^k: S_n \rightarrow Z^k(X) \subseteq X^k$ be some map lifting the map $f: S_n \rightarrow M = Z^k(X)/B^k(X)$, using projectivity of $S_n$. The other coordinates of $\varphi$ are defined as follows: put $\varphi_j=0$ for all $j>k$, and define $\varphi_i$ for $i<k$ inductively by projectivity of coordinates of $S$ --- using the exactness of $X$ in degrees smaller than $k$, and the fact that the image of the composition $\varphi_k d_S^{n-1}$ is contained in $B^k(X)$, which in turn follows from $f d_S^{n-1} = 0$.
		
		Therefore, we obtained a map of complexes $\varphi: S[n-k] \rightarrow X$ which is not null-homotopic, because $H^k(\varphi)=\tilde{f}$ is non-zero. The only way around this contradiction is that $0=H_n^E=\Hom_R(H_n,E)\simeq \Hom_{\Htpy(R)}(S[n-k],E[-k])$, establishing the induction step.
\end{proof}
			We say that a t-structure $(\mathcal{U},\mathcal{V})$ is \EMP{bounded below} if $\mathcal{V} \subseteq \Der^+(R)$. Since $\mathcal{V}$ is closed under products, this is equivalent to $\mathcal{V} \subseteq \Der^{\geq m}$ for some integer $m$. In the following lemma, we show that the condition from Lemma~\ref{L:injenvelope} can be used to replace a bounded below complex in a coaisle of a compactly generated t-structure by a well-chosen injective resolution --- one, such that each of its component considered as a stalk complex in the appropriate degree belongs to the coaisle. 


\begin{lem}\label{L:injcogen}
		Let $(\mathcal{U},\mathcal{V})$ be a bounded below t-structure. Suppose that the following implication holds:
		$$X \in \mathcal{V} \Rightarrow E(H^{\inf(X)}(X))[-\inf(X)] \in \mathcal{V}.$$
		Then there is a collection $\mathcal{E}$ of (shifts of) stalk complexes of injective modules such that $(\mathcal{U},\mathcal{V})$ is cogenerated by $\mathcal{E}$.
\end{lem}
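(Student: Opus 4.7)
The plan is to take
$$\mathcal{E}=\{E[-n]\mid E\text{ is an injective }R\text{-module},\; n\in\mathbb{Z},\; E[-n]\in\mathcal{V}\}$$
and to show that $(\mathcal{U},\mathcal{V})$ is cogenerated by $\mathcal{E}$, which by definition amounts to verifying $\mathcal{U}=\Perp{0}\mathcal{E}$ (the second equality of the torsion pair then being automatic). The inclusion $\mathcal{U}\subseteq\Perp{0}\mathcal{E}$ is immediate from $\mathcal{E}\subseteq\mathcal{V}$ and the very definition of a torsion pair.

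For the reverse inclusion, let $X\in\Perp{0}\mathcal{E}$ and consider its approximation triangle
$$\tau_\mathcal{U}X\to X\to \tau_\mathcal{V}X\to \tau_\mathcal{U}X[1].$$
It is enough to show $Y:=\tau_\mathcal{V}X=0$, since this will give $X\simeq \tau_\mathcal{U}X\in\mathcal{U}$. I proceed by contradiction and assume $Y\neq 0$. The boundedness assumption $\mathcal{V}\subseteq\Der^+(R)$ then ensures that $k:=\inf(Y)$ is a finite integer and $H^k(Y)\neq 0$. The hypothesis of the lemma, applied to $Y\in\mathcal{V}$, places the stalk complex $E:=E(H^k(Y))[-k]$ in $\mathcal{V}$, and hence in $\mathcal{E}$ by the very definition of that class.

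I would then compare two computations of $\Hom_{\Der(R)}(Y,E)$. Lemma~\ref{L:stalkinj} gives an isomorphism
$$\Hom_{\Der(R)}(Y,E)\simeq \Hom_R(H^k(Y),E(H^k(Y))),$$
and the inclusion $H^k(Y)\hookrightarrow E(H^k(Y))$ of the injective envelope corresponds to a nonzero element, so $\Hom_{\Der(R)}(Y,E)\neq 0$. On the other hand, applying $\Hom_{\Der(R)}(-,E)$ to the approximation triangle, and using that both $\tau_\mathcal{U}X$ and $\tau_\mathcal{U}X[1]$ lie in $\mathcal{U}$ while $E\in\mathcal{V}$, the flanking Hom-groups vanish. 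This yields an isomorphism $\Hom_{\Der(R)}(Y,E)\simeq \Hom_{\Der(R)}(X,E)$, and the right-hand side vanishes because $X\in\Perp{0}\mathcal{E}$ and $E\in\mathcal{E}$. This contradiction forces $Y=0$.

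I do not anticipate any serious obstacle beyond bookkeeping: the argument is a clean reduction via the truncation triangle. The role of the bounded-below hypothesis is precisely to guarantee that $\inf(Y)$ is a finite integer, so that the hypothesis of the lemma produces a concrete witness $E\in\mathcal{E}$, while Lemma~\ref{L:stalkinj} ensures that this witness actually detects the nonvanishing of $Y$ via an honest morphism in the derived category.
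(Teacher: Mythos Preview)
Your argument is correct and is considerably more direct than the paper's own proof. The paper proceeds constructively on the coaisle side: for each $X\in\mathcal{V}$ it inductively builds a sequence of triangles
\[
X_{n+1}\to X_n\to E_n[-k-n]\to X_{n+1}[1],
\]
with $E_n=E(H^{k+n}(X_n))$ and $E_n[-k-n]\in\mathcal{V}$, then assembles these into a complex $E$ of injectives quasi-isomorphic to $X$ (via a Milnor limit computation), and concludes that $X\in(\Perp{0}\mathcal{E})\Perp{0}$. This shows $\mathcal{V}=(\Perp{0}\mathcal{E})\Perp{0}$ directly. By contrast, you argue on the aisle side, proving $\mathcal{U}=\Perp{0}\mathcal{E}$ via the approximation triangle and a single application of the hypothesis to $\tau_\mathcal{V}X$; the equality $\mathcal{V}=(\Perp{0}\mathcal{E})\Perp{0}$ then falls out for free.

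What each approach buys: your proof is shorter and avoids all the Milnor-limit and inverse-system bookkeeping. The paper's proof, on the other hand, yields as a byproduct an explicit quasi-isomorphism between any $X\in\mathcal{V}$ and a bounded-below complex of injectives whose stalks, placed in the correct degree, all lie in $\mathcal{V}$. That extra structural information is not used elsewhere in the paper (Proposition~\ref{P:herpairs} only needs the cogeneration statement), so for the purposes of the lemma as stated your route is simply cleaner.
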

\begin{proof}
	Fix a complex $X = X_0 \in \mathcal{V}$ and let $k=\inf(X_0)$. Since $\mathcal{V}$ is bounded below, $k \in \mathbb{Z}$ and we can denote $E_0=E(H^k(X_0)) \in \ModR$. By the assumption, the stalk complex $E_0[-k]$ belongs to $\mathcal{V}$. Also, by Lemma~\ref{L:stalkinj}, there is a map of complexes $\iota_0: X_0 \rightarrow E_0[-k]$ inducing the injective envelope $H^k(\iota_0): H^k(X_0) \rightarrow E_0$. Consider the induced triangle:
		$$X_0 \xrightarrow{\iota_0} E_0[-k] \rightarrow \Cone(\iota_0) \rightarrow X_0[1].$$
		Since $\mathcal{V}$ is closed for extensions and cosuspensions, we obtain that 
		$$X_1:=\Cone(\iota_0)[-1] \in \mathcal{V}.$$ 
		By the mapping cone construction, $X_1$ can be represented by the following complex:
		$$\cdots \xrightarrow{d_X^{k-1}} X^k \xrightarrow{\begin{pmatrix}d_X^k \\ \iota_0^k\end{pmatrix}} X^{k+1} \oplus E_0 \xrightarrow{\begin{pmatrix}d_X^{k+1} & 0\end{pmatrix}} X^{k+2} \xrightarrow{d_X^{k+2}} \cdots$$
		From this, one can see that $H^k(X_1)=0$, and in fact $\inf(X_1)>k$. Continuing inductively we obtain a sequence of complexes $X_n \in \mathcal{V}$ and injective modules $E_n$ for each $n \geq 0$ such that $\inf(X_n) \geq k+n$, and $E_n[-k-n] \in \mathcal{V}$, together with triangles
		$$X_{n+1} \xrightarrow{f_n} X_n \xrightarrow{\iota_n} E_n[-k-n] \rightarrow X_{n+1}[1]$$
		for all $n \geq 0$, where $H^{k+n}(\iota_n): H^{k+n}(X_n) \rightarrow E_n$ is the injective envelope map. Note that $f_n: X_{n+1} \rightarrow X_n$ can be, up to quasi-isomorphism, represented by a degree-wise surjective map of complexes
		\begin{equation}\label{E:repre}
		\begin{tikzcd}[ampersand replacement=\&]
			 \cdots \arrow{r}{} \& X^{k+n} \oplus E_{n-1} \arrow[equal]{d}\arrow{r}{\begin{pmatrix}d^{k+n}_X & 0 \\ \alpha_{n} &  \epsilon_{n} \end{pmatrix}} \& X^{k+n+1} \oplus E_{n} \arrow{d}{\pi_{k+n+1}}\arrow{r} \& X^{k+n+2} \arrow[equal]{d}\arrow{r} \& \cdots\\
			 \cdots \arrow{r} \& X^{k+n} \oplus E_{n-1} \arrow{r} \& X^{k+n+1} \arrow{r} \& X^{k+n+2} \arrow{r} \& \cdots\\
		\end{tikzcd}
		\end{equation}
		where the vertical map $\pi_{k+n+1}$ is the obvious split epimorphism, and the maps $\alpha_{n}$, $\epsilon_{n}$ are the coordinates of the map $\iota^{k+n}_{n}$,
		$$\iota^{k+n}_{n}: X^{k+n}_{n} = X^{k+n} \oplus E_{n-1} \xrightarrow{\begin{pmatrix}\alpha_{n} & \epsilon_{n}\end{pmatrix}} E_n,$$ 
			where $\iota_n$ is the map obtained from the inductive construction above. (We let $E_{i} = 0$ for $i<0$ and $\alpha_j=\epsilon_j=0$ for $j\leq 0$ so that (\ref{E:repre}) above makes sense for any $n \geq 0$.) We define a complex $Z = \varprojlim_{n \geq 0}X_n$, where the maps in the inverse system are as in (\ref{E:repre}). Then $Z$ is a complex of form
		$$\cdots X^{k+n} \oplus E_{n-1} \xrightarrow{\begin{pmatrix}d^{k+n}_X & 0 \\ \alpha_{n} &  \epsilon_{n} \end{pmatrix}} X^{k+n+1} \oplus E_n \xrightarrow{\begin{pmatrix}d^{k+n+1}_X & 0 \\ \alpha_{n+1} &  \epsilon_{n+1} \end{pmatrix}} X^{k+n+2} \oplus E_{n+1} \cdots$$
			There is a triangle in $\Der(R)$
		$$E \rightarrow Z \xrightarrow{f} X \rightarrow E[1],$$
		where $f: Z \rightarrow X = X_0$ is the limit map. From the construction, we see that those coordinates of $f$, which are not just identities, consist of the split epimorphisms $f^i = \pi_{i}, i>k$. It follows that $E = \Cone(f)[-1] \simeq \Ker(f)$ is a complex of form
		$$\cdots \rightarrow 0 \rightarrow E_0 \xrightarrow{\epsilon_1} E_1 \xrightarrow{\epsilon_2} E_2 \xrightarrow{\epsilon_3} \cdots,$$
		where the coordinate $E_0$ is in degree $k+1$. Since $\inf(X_n) \geq k+n$ for any $k \geq 0$, we infer that $Z$ is exact, and thus $X$ is quasi-isomorphic to $E[1]$. Let $\mathcal{Y}_{X} = (\Perp{\leq 0}\{E_n[-n-k] \mid n \geq 0\})\Perp{0}$ be a subcategory of $\Der(R)$. Because $\mathcal{Y}_{X}$ is closed under extensions,  cosuspensions, and products, and thus also under Milnor limits, we have by (\ref{E:brutal}) that $E[1] \in \mathcal{Y}_{X}$, and thus $X \in \mathcal{Y}_{X}$.

		Repeating this for all complexes $X \in \mathcal{V}$, we obtain classes $\mathcal{Y}_X = (\Perp{\leq 0}\mathcal{E}_X)\Perp{0}$, where $\mathcal{E}_X$ is a collection of stalks of injective modules with $\mathcal{E}_X \subseteq \mathcal{V}$, such that $X \in \mathcal{Y}_X$. Then also $\mathcal{Y}_X \subseteq \mathcal{V}$ for each $X \in \mathcal{V}$. Put $\mathcal{E} = \bigcup_{X \in \mathcal{V}}\mathcal{E}_X$. Then we have $\mathcal{E} \subseteq \mathcal{V}$, and $X \in \mathcal{Y}_X \subseteq (\Perp{\leq 0}\mathcal{E})\Perp{0}$ for each $X \in \mathcal{V}$, and thus $\mathcal{V} = (\Perp{\leq 0}\mathcal{E})\Perp{0}$. Because $\mathcal{V}$ is closed under cosuspensions, so is $\mathcal{E}$, and therefore $\mathcal{V} = (\Perp{0}\mathcal{E})\Perp{0}$. In other words, the t-structure $(\mathcal{U},\mathcal{V})$ is cogenerated by $\mathcal{E}$.
		\end{proof}

	\begin{prop}\label{P:herpairs}
		For a bounded below t-structure $(\mathcal{U},\mathcal{V})$, the following conditions are equivalent:
			\begin{enumerate}
					\item[(i)] for any $X \in \mathcal{V}$, $E(H^{\inf X}(X))[-\inf(X)] \in \mathcal{V}$,
					\item[(ii)] $(\mathcal{U},\mathcal{V})$ is cogenerated by stalks of injective modules,
					\item[(iii)] there is a decreasing sequence
							$$\cdots \supseteq \mathcal{T}_n \supseteq \mathcal{T}_{n+1} \supseteq \mathcal{T}_{n+2} \supseteq \cdots$$
							of hereditary torsion classes such that $\mathcal{U}=\{X \in \Der(R) \mid H^n(X) \in \mathcal{T}_n\}$.
			\end{enumerate}
	\end{prop}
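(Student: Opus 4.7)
The plan is to prove the cyclic chain (i) $\Rightarrow$ (ii) $\Rightarrow$ (iii) $\Rightarrow$ (i). The first implication (i) $\Rightarrow$ (ii) is already contained in Lemma~\ref{L:injcogen}, so the remaining work is to translate between the cogeneration datum and the sequence of hereditary torsion classes, and then back to the injective-envelope property.

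For (ii) $\Rightarrow$ (iii), I would start with a cogenerating set $\mathcal{E}$ consisting of shifts of stalks of injective modules. Partition the data by degree, setting $\mathcal{E}_n=\{E \in \ModR \mid E[-n] \in \mathcal{E}\}$ and
$$\mathcal{T}_n=\{M \in \ModR \mid \Hom_R(M,E)=0 \text{ for all } E \in \mathcal{E}_n\}.$$
Lemma~\ref{L:stalkinj} directly translates the defining condition $\mathcal{U}=\Perp{0}\mathcal{E}$ into $\mathcal{U}=\{Y \in \Der(R) \mid H^n(Y) \in \mathcal{T}_n \text{ for all } n \in \mathbb{Z}\}$. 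Each $\mathcal{T}_n$ is a torsion class by standard reasoning, and closure under submodules (hereditary property) follows because maps from submodules into injectives extend to the ambient module. Finally, since $\mathcal{U}$ is closed under suspensions, the stalk $M[-(n+1)]$ with $M \in \mathcal{T}_{n+1}$ lies in $\mathcal{U}$ and so does $M[-n]$, forcing $M \in \mathcal{T}_n$; hence $\mathcal{T}_{n+1} \subseteq \mathcal{T}_n$.

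For (iii) $\Rightarrow$ (i), take $X \in \mathcal{V}$ and set $k=\inf(X)$. The standard-truncation triangle $H^k(X)[-k] \simeq \tau^{\leq k} X \to X \to \tau^{>k}X$, together with the orthogonality $\Der^{\leq k} \perp \Der^{>k}$ of the standard t-structure, yields
$$\Hom_{\Der(R)}(M[-k], X) \simeq \Hom_R(M, H^k(X))$$
for every $R$-module $M$. For $M \in \mathcal{T}_k$, the stalk $M[-k]$ lies in $\mathcal{U}$ by (iii), so the left-hand side vanishes. Consequently $H^k(X)$ belongs to the torsion-free class associated to $\mathcal{T}_k$, and since $\mathcal{T}_k$ is hereditary, this torsion-free class is closed under injective envelopes, whence $E(H^k(X))$ also lies in it. A final application of Lemma~\ref{L:stalkinj} to any $Y \in \mathcal{U}$ gives
$$\Hom_{\Der(R)}(Y, E(H^k(X))[-k]) \simeq \Hom_R(H^k(Y), E(H^k(X))) = 0,$$
since $H^k(Y) \in \mathcal{T}_k$, confirming $E(H^k(X))[-k] \in \mathcal{V}$.

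The main obstacle is the Hom computation in (iii) $\Rightarrow$ (i): while conceptually natural, it hinges on the fact that $\inf(X)=k$ lets the standard truncation collapse the ``upper part'' of $X$ without losing any information about morphisms from the stalk $M[-k]$. Once this identification is granted, the rest of the argument is a routine translation between the derived and module-theoretic orthogonalities via Lemma~\ref{L:stalkinj}, with the hereditary property doing the final lift from $H^k(X)$ to its injective envelope.
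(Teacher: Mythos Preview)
Your proposal is correct and follows essentially the same route as the paper: the implication (i) $\Rightarrow$ (ii) is Lemma~\ref{L:injcogen}, the implication (ii) $\Rightarrow$ (iii) is the same degree-wise partition and application of Lemma~\ref{L:stalkinj}, and your direct argument for (iii) $\Rightarrow$ (i) is the contrapositive of the paper's proof-by-contradiction (both hinge on lifting a map $M \to H^k(X)$ to a map $M[-k] \to X$ when $k=\inf(X)$, then using that hereditary torsion-free classes are closed under injective envelopes).
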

	\begin{proof}
		$(i) \Rightarrow (ii)$: Lemma~\ref{L:injcogen}.

			$(ii) \Rightarrow (iii)$: For each $n \in \mathbb{Z}$, let $\mathcal{E}_n$ be a collection of injective modules such that 
			$$\mathcal{U}=\{X \in \Der(R) \mid \Hom_{\Der(R)}(X,E[-n])=0 ~\forall E \in \mathcal{E}_n ~\forall n \in \mathbb{Z}\}.$$ 
			By injectivity and Lemma~\ref{L:stalkinj} we can rewrite the class as:
			$$\mathcal{U}=\{X \in \Der(R) \mid \Hom_{R}(H^n(X),E)=0 ~\forall E \in \mathcal{E}_n ~\forall n \in \mathbb{Z}\}.$$
			Therefore, 
			$$\mathcal{U}=\{X \in \Der(R) \mid H^n(X) \in \mathcal{T}_n\},$$
			where $\mathcal{T}_n$ is the torsion class cogenerated by $\mathcal{E}_n$. Again by injectivity, it follows that $\mathcal{T}_n$ is hereditary for each $n \in \mathbb{Z}$. Since $\mathcal{U}$ is closed under suspensions, necessarily $\mathcal{T}_n \supseteq \mathcal{T}_{n+1}$ for each $n \in \mathbb{Z}$.

			$(iii) \Rightarrow (i)$: Towards contradiction, suppose that there is $X \in \mathcal{V}$ such that $E = E(H^n(X))[-n] \not\in \mathcal{V}$, where $n=\inf(X)$. The injectivity of $E$ together with Lemma~\ref{L:stalkinj} implies that $E$ does not belong to $\mathcal{F}_n$, the torsion-free class of the hereditary torsion pair $(\mathcal{T}_n,\mathcal{F}_n)$. Therefore, $H^n(X) \not\in \mathcal{F}_n$, and whence there is a non-zero map $f: T \rightarrow H^n(X)$, for some $T \in \mathcal{T}_n$. Since $n = \inf(X)$, we can extend $f$ to a map of complexes $\tilde{f}: T[-n] \rightarrow X$ such that $H^n(\tilde{f})=f$. Since $T[-n] \in \mathcal{U}$, this is a contradiction with $X \in \mathcal{V}$.
	\end{proof}
	The next auxiliary lemma says that, even though the injective envelope is not a functorial construction, we can still apply it to well-ordered directed systems in a natural way.
	\begin{lem}\label{L:injdirected}
		Let $(M_\alpha, f_{\alpha,\beta} \mid \alpha < \beta < \lambda)$ be a well-ordered directed system in $\ModR$. Then there is a directed system $(E_\alpha, g_{\alpha,\beta} \mid \alpha < \beta < \lambda)$ such that $E_\alpha = E(M_\alpha)$, and such that the natural embeddings $M_\alpha \subseteq E_\alpha$ induce a homomorphism between the two directed systems.
	\end{lem}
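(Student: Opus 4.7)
The plan is to proceed by transfinite recursion on $\beta < \lambda$. At each stage $\beta$ I fix some choice of injective envelope $\iota_\beta: M_\beta \to E_\beta = E(M_\beta)$ and simultaneously define the morphisms $g_{\alpha,\beta}: E_\alpha \to E_\beta$ for every $\alpha < \beta$, ensuring both the compatibility relation $g_{\alpha,\beta}\iota_\alpha = \iota_\beta f_{\alpha,\beta}$ and the cocycle identity $g_{\beta,\gamma}g_{\alpha,\beta} = g_{\alpha,\gamma}$ for $\alpha < \beta < \gamma$. The case $\beta = 0$ requires no work.

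For the inductive step, assume the data has been produced for all indices strictly below $\beta$. Form the colimit $C = \varinjlim_{\alpha < \beta} E_\alpha$ with canonical cone $c_\alpha: E_\alpha \to C$, and let $L = \varinjlim_{\alpha < \beta} M_\alpha$. The inductive hypothesis says that the family of module maps $c_\alpha \iota_\alpha: M_\alpha \to C$ is compatible with the transition maps $f_{\alpha,\alpha'}$, so the universal property of $L$ yields a canonical map $\bar\iota: L \to C$. Similarly, the maps $\iota_\beta f_{\alpha,\beta}: M_\alpha \to E_\beta$ assemble into a map $\bar f: L \to E_\beta$.

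The key claim is that $\bar\iota$ is a monomorphism. Indeed, if $m \in M_\alpha$ represents an element of $L$ annihilated by $\bar\iota$, then $c_\alpha \iota_\alpha(m) = 0$ in the filtered colimit $C$, which means there exists $\alpha < \alpha' < \beta$ with $g_{\alpha,\alpha'}\iota_\alpha(m) = 0$; by the inductive compatibility this equals $\iota_{\alpha'} f_{\alpha,\alpha'}(m)$, and since $\iota_{\alpha'}$ is injective we get $f_{\alpha,\alpha'}(m) = 0$, i.e.\ $m$ already represents zero in $L$. Once $\bar\iota$ is a monomorphism, the injectivity of $E_\beta$ lets us extend $\bar f$ to a map $\phi: C \to E_\beta$. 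Setting $g_{\alpha,\beta} := \phi \circ c_\alpha$, the compatibility with the embeddings is immediate from $\phi\bar\iota = \bar f$, while the cocycle identity $g_{\alpha',\beta}g_{\alpha,\alpha'} = g_{\alpha,\beta}$ follows from the defining equality $c_{\alpha'}g_{\alpha,\alpha'} = c_\alpha$ of the colimit cone.

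Notice that this argument handles successor and limit ordinals uniformly; at a successor $\beta = \delta + 1$ the colimit $C$ reduces to $E_\delta$, so the step degenerates to the usual extension of $\iota_\beta f_{\delta,\beta}$ along $\iota_\delta$. The main conceptual obstacle is precisely the limit case, where one must construct all the maps $g_{\alpha,\beta}$ for $\alpha < \beta$ at once in a mutually consistent way; this is resolved by packaging them into a single map out of the colimit $C$, with the injectivity of $\bar\iota$ as the sole non-formal ingredient. This last property depends essentially on each $\iota_\alpha$ being a genuine inclusion (rather than an arbitrary map), which is what forces the need for the preliminary cocycle bookkeeping carried through the induction.
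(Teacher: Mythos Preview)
Your proof is correct and follows essentially the same transfinite-inductive strategy as the paper's: form the colimit of the $E_\alpha$'s already constructed, observe that the colimit of the $M_\alpha$'s embeds into it, and extend to $E_\beta$ by injectivity. The only differences are presentational: the paper splits into separate successor and limit cases (handling $\beta = \gamma+1$ by directly extending $f_{\gamma,\beta}$ along $\iota_\gamma$) whereas you treat both uniformly, and the paper invokes exactness of filtered colimits in $\ModR$ for the injectivity of $\bar\iota$ where you give an explicit element-chase.
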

\begin{proof}
	We construct maps $g_{\alpha,\beta}$ by the induction on $\beta<\lambda$. Suppose that we have already constructed maps $g_{\alpha,\beta}$ for all $\alpha < \beta < \gamma$ for some $\gamma < \lambda$, so that $(E_\alpha, g_{\alpha,\beta} \mid \alpha < \beta < \gamma)$ forms a directed system with the claimed properties. If $\gamma$ is a successor, we first let $g_{\gamma-1,\gamma}: E_{\gamma-1} \rightarrow E_\gamma$ be any map extending $f_{\gamma-1,\gamma}$, which exists by injectivity. For any $\alpha<\gamma$ we then put $g_{\alpha,\gamma} = g_{\gamma-1,\gamma} g_{\alpha,\gamma-1}$. This is easily seen to define a directed system, and $g_{\alpha,\gamma}{}_{\restriction{M_\alpha}} = g_{\gamma-1,\gamma}  g_{\alpha,\gamma-1}{}_{\restriction{M_\alpha}} = g_{\gamma-1,\gamma}  f_{\alpha,\gamma-1} = f_{\gamma-1,\gamma}  f_{\alpha,\gamma-1} = f_{\alpha,\gamma}$.

				Suppose now that $\gamma$ is a limit ordinal. By the inductive assumption, the system $(E_\alpha, g_{\alpha,\beta} \mid \alpha < \beta < \gamma)$ is a direct system, and we let $L = \varinjlim_{\alpha < \gamma} E_\alpha$. Denote by $h_\alpha : E_\alpha \rightarrow L$ the limit maps. By the exactness of direct limit, $\varinjlim_{\alpha < \gamma} M_\alpha$ is naturally a subobject in $L$. Then there is a map $l: L \rightarrow E_\gamma$ extending the universal map $\varinjlim_{\alpha < \gamma} M_\alpha \rightarrow M_\gamma$. We put $g_{\alpha,\gamma} = l  h_\alpha$ for any $\alpha<\gamma$. For any $\alpha < \beta < \gamma$, we have $g_{\beta,\gamma}  g_{\alpha,\beta}= l  h_\beta  g_{\alpha, \beta} = l  h_\alpha = g_{\alpha,\gamma}$. Therefore, $(E_\alpha, g_{\alpha,\beta} \mid \alpha<\beta<\gamma+1)$ forms a directed system. By the construction, $g_{\alpha,\beta}$ equals $f_{\alpha,\beta}$ after restriction to $M_\alpha$ for any $\alpha < \beta < \gamma+1$. This establishes the induction step, and therefore also the proof.
\end{proof}
\begin{lem}\label{L:finitetype}
		Let $(\mathcal{U},\mathcal{V})$ be a bounded below t-structure satisfying the conditions of Proposition~\ref{P:herpairs}. If $(\mathcal{U},\mathcal{V})$ is homotopically smashing, then the hereditary torsion pairs $(\mathcal{T}_n,\mathcal{F}_n)$ of Proposition~\ref{P:herpairs} are of finite type.
\end{lem}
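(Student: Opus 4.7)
The plan is to show that each torsion-free class $\mathcal{F}_n$ is closed under direct limits; by a standard transfinite reduction it suffices to handle colimits along well-ordered chains. The conceptual core is a bridge between $\mathcal{F}_n$-membership in $\ModR$ and coaisle membership in $\Der(R)$: for any injective $R$-module $E$, one has $E \in \mathcal{F}_n$ if and only if $E[-n] \in \mathcal{V}$. Indeed, by Proposition~\ref{P:herpairs}(iii), every $X \in \mathcal{U}$ satisfies $H^n(X) \in \mathcal{T}_n$, and conversely every stalk $T[-n]$ with $T \in \mathcal{T}_n$ lies in $\mathcal{U}$; combined with Lemma~\ref{L:stalkinj}, this identifies $\Hom_{\Der(R)}(X, E[-n])$ with $\Hom_R(H^n(X), E)$, which vanishes on all of $\mathcal{U}$ precisely when $E \in \mathcal{F}_n$.

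Fix a well-ordered directed system $(M_\alpha, f_{\alpha,\beta} \mid \alpha < \beta < \lambda)$ in $\mathcal{F}_n$ with colimit $M = \varinjlim_\alpha M_\alpha$. Applying Lemma~\ref{L:injdirected} yields a compatible directed system $(E_\alpha = E(M_\alpha), g_{\alpha,\beta})$ of injective envelopes together with intertwining embeddings $\iota_\alpha: M_\alpha \hookrightarrow E_\alpha$. Since the torsion pair $(\mathcal{T}_n,\mathcal{F}_n)$ is hereditary, each $E_\alpha \in \mathcal{F}_n$, and by the bridge above, each stalk $E_\alpha[-n]$ lies in $\mathcal{V}$.

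Set $L = \varinjlim_\alpha E_\alpha$, computed in $\ModR$. Because directed colimits in $\ModR$ are exact, the diagram $(E_\alpha[-n] \mid \alpha < \lambda)$ in $\mathbf{C}(\ModR^\lambda)$ is a coherent lift of the diagram we started from in $\Der(R)$, and its directed homotopy colimit is simply the stalk $L[-n]$. The homotopically smashing hypothesis then yields $L[-n] \in \mathcal{V}$, and reading the bridge in the opposite direction (for each $T \in \mathcal{T}_n$ one gets $\Hom_R(T,L) \simeq \Hom_{\Der(R)}(T[-n], L[-n]) = 0$, since $T[-n] \in \mathcal{U}$) gives $L \in \mathcal{F}_n$. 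Finally, the maps $\iota_\alpha$ induce a morphism $M \to L$ whose kernel is $\varinjlim_\alpha \ker(\iota_\alpha) = 0$, so $M$ embeds into $L$; since $\mathcal{F}_n$ is closed under submodules, we conclude $M \in \mathcal{F}_n$.

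The main technical point I foresee is the identification $\dhocolim_{\alpha < \lambda} E_\alpha[-n] \simeq L[-n]$ in $\Der(R)$, which rests on the presence of an honest coherent lift in $\mathbf{C}(\ModR^\lambda)$ together with the exactness of directed colimits in $\ModR$ (as discussed before Proposition~\ref{P:properties}). A secondary concern is the reduction from arbitrary to well-ordered directed colimits, but this is handled by standard filtration arguments.
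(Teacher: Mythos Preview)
Your proof is correct and follows essentially the same approach as the paper's: both reduce to well-ordered directed systems, pass to injective envelopes via Lemma~\ref{L:injdirected}, use the bridge between $\mathcal{F}_n$-membership and coaisle membership of stalks (via Lemma~\ref{L:stalkinj} and the cohomological description of $\mathcal{U}$), and invoke the homotopically smashing hypothesis on the directed homotopy colimit $L[-n]$. The only difference is cosmetic---the paper phrases this as a proof by contradiction (citing \cite[Lemma 2.14]{GT} for the well-ordered reduction), whereas you give a direct argument.
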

\begin{proof}
		We proceed by contradiction. Suppose that there is $n \in \mathbb{Z}$ such that the torsion-free class $\mathcal{F}_n$ is not closed under direct limits. Then by \cite[Lemma 2.14]{GT}, there is a well-ordered directed system $(M_\alpha \mid \alpha < \lambda)$ such that $M_\alpha \in \mathcal{F}_n$ for each $\alpha < \lambda$, but $M = \varinjlim_{\alpha < \lambda} M_i \not\in \mathcal{F}_n$. Let $(E_\alpha \mid  \alpha < \lambda)$ be the directed system provided by Lemma~\ref{L:injdirected}. Since $\mathcal{F}_n$ is closed under injective envelopes, and by the Lemma~\ref{L:injdirected} $M$ embeds into $L=\varinjlim_{\alpha < \lambda}E_\alpha$, this is a directed system of injective modules from $\mathcal{F}_n$, such that its direct limit does not belong to $\mathcal{F}_n$.

		Because $E_\alpha \in \mathcal{F}_n$ is injective, it is easy using Lemma~\ref{L:stalkinj} to check that $E_\alpha[-n] \in \mathcal{V}$ for all $\alpha<\lambda$. Since the direct limit $L$ is not in $\mathcal{F}_n$, there is a non-zero map from some module from $\mathcal{T}_n$ to $L$, and thus $L[-n] \not\in \mathcal{V}$. Therefore, $L[-n] = \dhocolim_{\alpha < \lambda} E_\alpha[-n] \not\in \mathcal{V}$, which is in contradiction with $(\mathcal{U},\mathcal{V})$ being homotopically smashing.
\end{proof}
	\begin{rem}
	We sketch here an alternative proof of Lemma~\ref{L:finitetype}, which relies on a deep result from \cite{SSV}, but is perhaps more conceptual. It is relatively straightforward to show that the torsion radical $t_n$ associated to the torsion pair $(\mathcal{T}_n,\mathcal{F}_n)$ can be expressed as $t_n(M) = H^n(\tau_{\mathcal{U}}(M[-n]))$ for all $M \in \ModR$. One can show using the result \cite[Theorem 3.1]{SSV} that $\tau_\mathcal{U}$ naturally preserves directed homotopy colimits. This implies that $t_n(M)$ commutes with direct limits, which in turn implies that $(\mathcal{T}_n,\mathcal{F}_n)$ is of finite type.
\end{rem}
A \EMP{Thomason filtration} of $\Spec(R)$ is a decreasing map $\Phi: \mathbb{Z} \rightarrow (2^{\Spec(R)},\subseteq)$ such that $\Phi(i)$ is a Thomason subset of $\Spec(R)$ for each $i \in \mathbb{Z}$. To any Thomason filtration, assign the following class:

$$\mathcal{U}_\Phi = \{X \in \Der(R) \mid \Supp H^n(X) \subseteq \Phi(n) ~\forall n \in \mathbb{Z}\}.$$

Given $x \in R$, the \EMP{Koszul complex of $x$} is the complex 
	$$K(x) = \cdots 0 \rightarrow R \xrightarrow{\cdot x} R \rightarrow 0 \rightarrow \cdots$$
concentrated in degrees $-1$ and $0$. If $\bar{x}=(x_1,x_2,\ldots,x_n)$ is a sequence of elements of $R$, we define the \EMP{Koszul complex of $\bar{x}$} by tensor product: $K(x_1,x_2,\ldots,x_n) = \bigotimes_{i=1}^n K(x_i)$.

\begin{conv}
		\begin{enumerate}
				\item[(i)] Let $I$ be a finitely generated ideal, and let $\bar{x}$ and $\bar{y}$ be two finite sequences of generators of $I$. It is \EMP{not} true in general that $K(\bar{x})$ and $K(\bar{y})$ are quasi-isomorphic --- see \cite[Proposition 1.6.21]{BH}. Nevertheless, we will for each finitely generated ideal $I$ fix once and for all a finite sequence $\bar{x}_I$ of generators, and let $K(I):=K(\bar{x}_I)$. Our results will \EMP{not} depend on the choice of the generating sequence. The reason behind this is that although the quasi-isomorphism class of the Koszul complex does depend on the choice of generators, the vanishing of the relative cohomology does not --- see \cite[Corollary 1.6.22 and Corollary 1.6.10(d)]{BH}.
				\item[(ii)] In the following, we will often need to enumerate over all finitely generated ideals $I$ such that the basic Zariski-closed set $V(I) = \{\pp \in \Spec(R) \mid I \subseteq \pp\}$ is a subset of some Thomason set $X$. For brevity, we will use a shorthand quantifier ``$\forall V(I) \subseteq X$'' instead. There is no risk of confusion, because even though the shorthand would make sense for infinitely generated ideals as well, these would either lead to an undefined expressions $K(I)$ or $\check{C}^{\sim}(I)$ with $I$ infinitely generated, or to inclusion of cyclic modules $R/I$ with $I$ infinitely generated, which would not endanger the validity of our results.
		\end{enumerate}
\end{conv}

We can gather our findings about bounded below compactly generated t-structure in this way:
\begin{lem}\label{L:boundedbelow}
	Let $R$ be a commutative ring and $(\mathcal{U},\mathcal{V})$ a bounded below t-structure. Then the following are equivalent:
		\begin{itemize}
			\item[(i)] $(\mathcal{U},\mathcal{V})$ is compactly generated,
			\item[(ii)] $(\mathcal{U},\mathcal{V})$ is homotopically smashing, and 				
					$$X \in \mathcal{V} \Rightarrow E(H^{\inf(X)}(X))[-\inf(X)] \in \mathcal{V},$$
			\item[(iii)] there is a Thomason filtration $\Phi$ such that there is $k \in \mathbb{Z}$ with $\Phi(k) = \Spec(R)$, and $\mathcal{U} = \mathcal{U}_\Phi$,
			\item[(iv)] there is a Thomason filtration $\Phi$ such that there is $k \in \mathbb{Z}$ with $\Phi(k) = \Spec(R)$, and such that the t-structure $(\mathcal{U},\mathcal{V})$ is generated by the set
					$$\mathcal{S}_\Phi = \{K(I)[-n] \mid ~\forall V(I) \subseteq \Phi(n) ~\forall n \in \mathbb{Z}\}.$$
		\end{itemize}
\end{lem}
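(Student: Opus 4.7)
I would prove the equivalence by the cycle (iv) $\Rightarrow$ (i) $\Rightarrow$ (ii) $\Rightarrow$ (iii) $\Rightarrow$ (iv). The first implication is immediate: every Koszul complex $K(I)$ is a perfect complex, hence compact in $\Der(R)$.

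For (i) $\Rightarrow$ (ii), Proposition~\ref{P:properties}(ii) delivers the homotopically smashing property directly. For the injective envelope condition, fix a set $\mathcal{S}$ of compact generators of $(\mathcal{U},\mathcal{V})$; since objects of $\Der(R)^c$ are perfect, we may assume $\mathcal{S} \subseteq \Htpy^-(\projR)$. Any $X \in \mathcal{V} = \mathcal{S}\Perp{\leq 0}$ has $\inf(X) \in \mathbb{Z}$ by the bounded-below assumption on $\mathcal{V}$, so Lemma~\ref{L:injenvelope} yields $E(H^{\inf(X)}(X))[-\inf(X)] \in \mathcal{S}\Perp{\leq 0} = \mathcal{V}$.

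For (ii) $\Rightarrow$ (iii), I would chain Proposition~\ref{P:herpairs} (rewriting $\mathcal{U}$ cohomology-wise as $\{X \mid H^n(X) \in \mathcal{T}_n\}$ for a decreasing family of hereditary torsion classes $\mathcal{T}_n$), Lemma~\ref{L:finitetype} (promoting each $\mathcal{T}_n$ to finite type via the homotopically smashing hypothesis), and Theorem~\ref{T:thomasonpairs} (identifying $\mathcal{T}_n = \mathcal{T}_{\Phi(n)}$ for Thomason subsets $\Phi(n) \subseteq \Spec(R)$). To locate $k$ with $\Phi(k) = \Spec(R)$, I would exploit $\mathcal{V} \subseteq \Der^{\geq m}$ to deduce $\Der^{\leq m-1} \subseteq \mathcal{U}$: for any module $N$ and $j < m$, $\Hom_{\Der(R)}(N[-j], Y) \simeq \Hom_{\Der(R)}(N, Y[j]) = 0$ for $Y \in \mathcal{V}$, since $Y[j] \in \Der^{>0}$; hence every module stalk in degree $\leq m-1$ lies in $\mathcal{U}$, and by closure of $\mathcal{U}$ under homotopy colimits and extensions (Proposition~\ref{P:properties}(i)) the whole of $\Der^{\leq m-1}$ does too. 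One may then take $k = m-1$.

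The key step is (iii) $\Rightarrow$ (iv), which I would handle by bootstrapping on the already-established implications. First, $\mathcal{S}_\Phi \subseteq \mathcal{U}_\Phi$: for $K(I)[-n] \in \mathcal{S}_\Phi$ with $V(I) \subseteq \Phi(n)$, one has $\Supp H^j(K(I)[-n]) \subseteq V(I) \subseteq \Phi(n) \subseteq \Phi(j)$ for every $j \leq n$, and $H^j$ vanishes otherwise. Hence $\mathcal{U}' := \aisle(\mathcal{S}_\Phi) \subseteq \mathcal{U}_\Phi$, and the t-structure $(\mathcal{U}',\mathcal{V}')$ is compactly generated. Since $\Phi(k) = \Spec(R)$ places the stalk $R[-k] = K((0))[-k]$ (with $(0)$ generated by the empty sequence) into $\mathcal{S}_\Phi$, one computes $\mathcal{V}' \subseteq \Der^{\geq k+1}$, so $(\mathcal{U}',\mathcal{V}')$ is bounded below. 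The already-proved implication (i) $\Rightarrow$ (iii), applied to $(\mathcal{U}',\mathcal{V}')$, produces a Thomason filtration $\Phi'$ with $\mathcal{U}' = \mathcal{U}_{\Phi'}$. Now $\Phi = \Phi'$ follows by a short Thomason argument: for $\pp \in \Phi(n)$, any finitely generated $I$ with $\pp \in V(I) \subseteq \Phi(n)$ gives $K(I)[-n] \in \mathcal{S}_\Phi \subseteq \mathcal{U}_{\Phi'}$, so $\pp \in V(I) = \Supp H^n(K(I)[-n]) \subseteq \Phi'(n)$; conversely, for $\pp \in \Phi'(n)$, any finitely generated $J$ with $\pp \in V(J) \subseteq \Phi'(n)$ gives $R/J[-n] \in \mathcal{U}_{\Phi'} = \mathcal{U}' \subseteq \mathcal{U}_\Phi$, whence $\pp \in V(J) \subseteq \Phi(n)$. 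Thus $\mathcal{U}_\Phi = \aisle(\mathcal{S}_\Phi)$. The only delicate point in the whole cycle is confirming that $(\mathcal{U}',\mathcal{V}')$ is bounded below, which is precisely what the hypothesis $\Phi(k) = \Spec(R)$ supplies, allowing the earlier (i) $\Rightarrow$ (iii) machinery to be reused on the auxiliary t-structure.
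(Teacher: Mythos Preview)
Your proof is correct and follows essentially the same cycle and the same key ideas as the paper's proof: Lemma~\ref{L:injenvelope} for (i)$\Rightarrow$(ii), the chain Proposition~\ref{P:herpairs}--Lemma~\ref{L:finitetype}--Theorem~\ref{T:thomasonpairs} for (ii)$\Rightarrow$(iii), and the bootstrapping via the auxiliary compactly generated t-structure $(\mathcal{U}',\mathcal{V}')$ for (iii)$\Rightarrow$(iv). You are in fact more explicit than the paper in two places: you spell out why $\Phi(m-1)=\Spec(R)$ follows from $\mathcal{V}\subseteq\Der^{\geq m}$, and---more importantly---you verify that $(\mathcal{U}',\mathcal{V}')$ is itself bounded below before invoking (i)$\Rightarrow$(iii) on it, a step the paper leaves implicit. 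One small caveat: your identification $K((0))=R$ relies on choosing the empty generating sequence for the zero ideal, which may not match the paper's fixed convention; but since the paper itself remarks that the results do not depend on the choice of generators (and in any case $H^0(K((0)))=R$ regardless), this is harmless.
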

\begin{proof}
		$(i) \Rightarrow (ii):$ First, by Proposition~\ref{P:properties} any compactly generated t-structure is homotopically smashing. Let $\mathcal{S} \subseteq \Der^c(R)$ be such that $\mathcal{V} = \mathcal{S}\Perp{0}$. Because $\mathcal{V}$ is closed under cosuspensions, we also have $\mathcal{V} = \mathcal{S}\Perp{\leq 0}$. By the assumption, there is $k \in \mathbb{Z}$ such that $\mathcal{V} \subseteq \Der^{\geq k}$. Therefore, we can apply Lemma~\ref{L:injenvelope} to infer that for any $X \in \mathcal{V}$, $E(H^{\inf(X)}(X))[-\inf(X)] \in S\Perp{\leq 0}$ for all $S \in \mathcal{S}$, and therefore $E(H^{\inf(X)}(X))[-\inf(X)] \in \mathcal{V}$.
	
		$(ii) \Rightarrow (iii):$ Using Proposition~\ref{P:herpairs} and Lemma~\ref{L:finitetype}, there is a sequence $\mathcal{T}_n$ of hereditary torsion pairs $(\mathcal{T}_n,\mathcal{F}_n)$ of finite type such that $\mathcal{U}=\{X \in \Der(R) \mid H^n(X) \in \mathcal{T}_n ~\forall n \in \mathbb{Z}\}$. For each $n \in \mathbb{Z}$, there is a Thomason set $\Phi(n)$ such that $\mathcal{T}_n = \{M \in \ModR \mid \Supp(M) \subseteq \Phi(n)\}$, using Theorem~\ref{T:thomasonpairs}. Because $\mathcal{U}$ is closed under suspensions, the map $\Phi: \mathbb{Z} \rightarrow 2^{\Spec(R)}$ is a Thomason filtration. We conclude that $\mathcal{U} = \mathcal{U}_\Phi$. Finally, since $(\mathcal{U},\mathcal{V})$ is bounded below, there is necessarily an integer $k$ such that $\Phi(k) = \Spec(R)$.
		
		$(iii) \Rightarrow (iv):$ Let $(\mathcal{U}',\mathcal{V}')$ be the t-structure generated by the set $\mathcal{S}_\Phi$. By \cite[Proposition 1.6.5b]{BH}, $\Supp H^k(K(I)) \subseteq V(I)$ for any $k \in \mathbb{Z}$, and thus $K(I)[-n] \in \mathcal{U}_\Phi$ for all $n \in \mathbb{Z}$ and any finitely generated ideal $I$ such that $V(I) \subseteq \Phi(n)$. Therefore, $\mathcal{U}' \subseteq \mathcal{U} = \mathcal{U}_\Phi$.

		For the converse inclusion, note that $\mathcal{S}_\Phi \subseteq \Der^c(R)$. Then we can use the already proven implication $(i) \Rightarrow (iii)$ to infer that there is a Thomason filtration $\Phi'$ such that $\mathcal{U}' = \mathcal{U}_{\Phi'}$. Since $\mathcal{U}' \subseteq \mathcal{U}_\Phi$, we have $\Phi'(n) \subseteq \Phi(n)$ for all $n \in \mathbb{Z}$. But since by \cite[p. 360, 8.2.7]{NC}, $H^0(K(I)) \simeq R/I$, we have $\Supp(H^n(K(I)[-n])) = V(I)$. Therefore, necessarily $\Phi=\Phi'$, and thus $\mathcal{U}' = \mathcal{U}_{\Phi'} = \mathcal{U}_\Phi = \mathcal{U}$.

		$(iv) \Rightarrow (i):$ Clear, since Koszul complexes are compact objects of $\Der(R)$.
\end{proof}
	We call a Thomason filtration $\Phi$ \EMP{bounded below} if there is an integer $k$ such that $\Phi(k) = \Spec(R)$. Then we can formulate Lemma~\ref{L:boundedbelow} in a form of a bijective correspondence:
\begin{thm}\label{T:bounded}
	Let $R$ be a commutative ring. Then there is a 1-1 correspondence:
		$$\left \{ \begin{tabular}{ccc} \text{ Bounded below Thomason} \\ \text{filtrations $\Phi$ of $\Spec(R)$}  \end{tabular}\right \}  \leftrightarrow  \left \{ \begin{tabular}{ccc} \text{Bounded below compactly generated } \\ \text{t-structures in $\Der(R)$} \end{tabular}\right \}.$$
		The correspondence is given by mutually inverse assignments
		$$\Phi \mapsto (\mathcal{U}_\Phi,\mathcal{U}_\Phi\Perp{0}),$$
		and
		$$(\mathcal{U},\mathcal{V}) \mapsto \Phi_{\mathcal{U}},$$
		where
		$$\Phi_{\mathcal{U}}(n) = \bigcup_{M \in \ModR, M[-n] \in \mathcal{U}} \Supp(M).$$
\end{thm}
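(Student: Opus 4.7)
The plan is to bootstrap directly from Lemma~\ref{L:boundedbelow}, which already identifies the bounded below compactly generated t-structures as precisely those whose aisle has the form $\mathcal{U}_\Phi$ for some bounded below Thomason filtration $\Phi$. It remains only to verify that the two explicit maps in the statement are well-defined and mutually inverse.

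First I would note that $\Phi \mapsto (\mathcal{U}_\Phi, \mathcal{U}_\Phi\Perp{0})$ lands in the desired target by the implication $(iii) \Rightarrow (i)$ of Lemma~\ref{L:boundedbelow}. The core computation is then the identity $\Phi_{\mathcal{U}_\Phi} = \Phi$ for every bounded below Thomason filtration $\Phi$. For a module $M$ and integer $n$, the stalk complex $M[-n]$ has only one nonzero cohomology group, so $M[-n] \in \mathcal{U}_\Phi$ precisely when $\Supp(M) \subseteq \Phi(n)$; this yields the inclusion $\Phi_{\mathcal{U}_\Phi}(n) \subseteq \Phi(n)$ directly from the definition of $\Phi_\mathcal{U}$. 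For the reverse inclusion, the Thomason property lets me write $\Phi(n) = \bigcup_{V(I) \subseteq \Phi(n)} V(I)$, and since $\Supp(R/I) = V(I)$, each cyclic module $R/I$ with $V(I) \subseteq \Phi(n)$ satisfies $R/I[-n] \in \mathcal{U}_\Phi$ and thus contributes $V(I)$ to the union defining $\Phi_{\mathcal{U}_\Phi}(n)$.

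For the opposite composition, let $(\mathcal{U},\mathcal{V})$ be a bounded below compactly generated t-structure. By the implication $(i) \Rightarrow (iii)$ of Lemma~\ref{L:boundedbelow}, there is a bounded below Thomason filtration $\Phi$ with $\mathcal{U} = \mathcal{U}_\Phi$, and by the preceding paragraph $\Phi_\mathcal{U} = \Phi$. Hence $\mathcal{U}_{\Phi_\mathcal{U}} = \mathcal{U}_\Phi = \mathcal{U}$, and the coaisle is recovered as $\mathcal{V} = \mathcal{U}\Perp{0}$. The same identity $\Phi_\mathcal{U} = \Phi$ simultaneously shows that $\Phi_\mathcal{U}$ is indeed a bounded below Thomason filtration, so the assignment $(\mathcal{U},\mathcal{V}) \mapsto \Phi_\mathcal{U}$ is also well-defined. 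There is no genuine obstacle in this final step: the entire content of the bijection, namely that compact generation in the bounded below setting is controlled by Thomason data and that the corresponding hereditary torsion pairs are of finite type, has been absorbed into Lemma~\ref{L:boundedbelow}, and the theorem is essentially a formal unwinding of that equivalence.
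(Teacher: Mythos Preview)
Your proposal is correct and matches the paper's approach: the paper presents Theorem~\ref{T:bounded} without a separate proof, introducing it as a reformulation of Lemma~\ref{L:boundedbelow} into the language of a bijective correspondence. Your argument is precisely the formal verification that the two assignments are well-defined and mutually inverse, relying on the equivalence $(i)\Leftrightarrow(iii)$ of that lemma together with the elementary observation that a stalk $M[-n]$ lies in $\mathcal{U}_\Phi$ if and only if $\Supp(M)\subseteq\Phi(n)$.
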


\section{A version of the telescope conjecture for bounded below t-structures over commutative noetherian rings}
	The approach of the previous section can be used to extract more information about bounded below t-structures in the case of a commutative noetherian ring --- for such t-structures, the homotopically smashing property is equivalent to compact generation. This can be seen as a ``semistable'' version of the telescope conjecture for bounded below t-structures in place of localizing pairs. As an application, we will use this in Section~\ref{S:cosilting} to establish a cofinite type result for cosilting complexes over commutative noetherian rings.
	\begin{lem}\label{L:loc}
		Let $R$ be a commutative ring and $(\mathcal{U},\mathcal{V})$ a homotopically smashing t-structure in $\Der(R)$. Let $\pp$ be a prime and set
		$$\mathcal{U}_{\pp}=\{X \otimes_R R_{\pp} \mid X \in \mathcal{U}\},$$
		$$\mathcal{V}_{\pp}=\{X \otimes_R R_{\pp} \mid X \in \mathcal{V}\}.$$
		Then the pair $(\mathcal{U}_{\pp},\mathcal{V}_{\pp})$ is a homotopically smashing t-structure in $\Der(R_{\pp})$.
	\end{lem}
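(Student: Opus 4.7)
The plan is to reduce everything to the single claim that both $\mathcal{U}$ and $\mathcal{V}$ in the ambient $\Der(R)$ are closed under the localization functor $-\otimes_R R_\pp$. Once this is in hand, all of the torsion pair, t-structure, and homotopically smashing conditions for $(\mathcal{U}_\pp,\mathcal{V}_\pp)$ should transfer routinely from those of $(\mathcal{U},\mathcal{V})$ via the adjunction between $-\otimes_R R_\pp$ and the restriction of scalars functor $\iota:\Der(R_\pp)\to\Der(R)$.

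The closure of $\mathcal{U}$ under $-\otimes_R R_\pp$ is immediate from Proposition~\ref{P:rigid}(i), since $R_\pp\in\Der^{\leq 0}$ as a module concentrated in degree zero. The closure of $\mathcal{V}$ is the main obstacle and is the only place the homotopically smashing hypothesis enters. I would write $R_\pp=\varinjlim_{s\in S}\tfrac{1}{s}R$ as a directed colimit (over $S=R\setminus\pp$ ordered by divisibility) of cyclic free $R$-submodules, each of which is isomorphic to $R$; for $V\in\mathcal{V}$, the complex $V\otimes_R R_\pp$ (which agrees with the derived tensor, as $R_\pp$ is flat) is then the directed colimit of a coherent diagram in $\Ch(\ModR^S)$ whose entries are all isomorphic to $V$. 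By the exactness of directed colimits of modules, this ordinary colimit in $\Ch(\ModR)$ agrees with the directed homotopy colimit in $\Der(R)$, and the homotopically smashing hypothesis yields $V\otimes_R R_\pp\in\mathcal{V}$.

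With both closures in hand, I would verify the torsion pair axioms directly. Hom-orthogonality follows from
$$\Hom_{\Der(R_\pp)}(U\otimes_R R_\pp,V\otimes_R R_\pp)\simeq\Hom_{\Der(R)}(U,\iota(V\otimes_R R_\pp))=0$$
by adjunction, using that $\iota(V\otimes_R R_\pp)\in\mathcal{V}$. Approximation triangles arise by applying the triangulated functor $-\otimes_R R_\pp$ to the approximation triangle of $\iota Y$ in $\Der(R)$, exploiting $Y\otimes_R R_\pp\simeq Y$ in $\Der(R_\pp)$. Direct summand closure of $\mathcal{U}_\pp$ and $\mathcal{V}_\pp$ follows since a summand in $\Der(R_\pp)$ remains a summand in $\Der(R)$ under $\iota$, and both $\mathcal{U},\mathcal{V}$ are closed under summands and under $-\otimes_R R_\pp$. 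The axiom $\mathcal{U}_\pp[1]\subseteq\mathcal{U}_\pp$ is immediate. For the homotopically smashing property of $(\mathcal{U}_\pp,\mathcal{V}_\pp)$, note that $\iota$ preserves directed colimits at the level of modules, hence preserves directed homotopy colimits; combined with the inclusion $\iota(\mathcal{V}_\pp)\subseteq\mathcal{V}$ from the second paragraph and the smashing assumption on $\mathcal{V}$, this transfers closure under $\dhocolim$ down to $\mathcal{V}_\pp$.
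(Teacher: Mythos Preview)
Your approach is essentially the same as the paper's: show that both $\mathcal{U}$ and $\mathcal{V}$ are closed under $-\otimes_R R_\pp$ by expressing $R_\pp$ as a directed colimit of finite free $R$-modules, then transfer the t-structure axioms via localization/adjunction. The paper invokes Lazard's theorem for this, while you write down the explicit system indexed by $S=R\setminus\pp$; one small caveat is that your phrasing ``cyclic free $R$-submodules $\tfrac{1}{s}R$, each isomorphic to $R$'' is not quite right when $R$ has $S$-torsion (the map $r\mapsto r/s$ need not be injective), but the intended directed system of copies of $R$ with transition maps given by multiplication by elements of $S$ works regardless, so the argument is sound.
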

	\begin{proof}
		Because $R_{\pp}$ is a flat $R$-module, it can be by \cite[Th\'{e}or\`{e}me 1.2]{Laz} written as a direct limit of finite free modules, $R_{\pp} = \varinjlim_{i \in I}R^{n_I}$. Since $\mathcal{V}$ is closed under homotopy colimits, then 
		$$X_{\pp} := X \otimes_R R_{\pp} = X \otimes_R \varinjlim_{i \in I}R^{n_I} \simeq  \varinjlim_{i \in I}X^{n_I} \simeq \dhocolim_{i \in I}X^{n_I}  \in \mathcal{V}$$ 
		for any $X \in \mathcal{V}$ and prime $\pp$, and thus $\mathcal{V}_{\pp} \subseteq \mathcal{V}$. Similarly, $\mathcal{U}_{\pp} \subseteq \mathcal{U}$, and $\mathcal{U}_{\pp}$ is clearly closed under suspensions. It remains to show that $(\mathcal{U}_{\pp},\mathcal{V}_{\pp})$ provides approximation triangles in $\Der(R_{\pp})$. Let $Y \in \Der(R_{\pp})$, then there is an approximation triangle with respect to $(\mathcal{U},\mathcal{V})$ in $\Der(R)$:
		\begin{equation}\label{E:approxloc}
			\tau_{\mathcal{U}} Y \rightarrow Y \rightarrow \tau_{\mathcal{V}}Y \xrightarrow{} \tau_{\mathcal{U}} Y[1].
		\end{equation}
			Localizing this triangle at $\pp$ yields the desired approximation triangle with respect to $(\mathcal{U}_{\pp},\mathcal{V}_{\pp})$ (and in fact, by uniqueness of approximation triangles, the triangle (\ref{E:approxloc}) is already in $\Der(R_{\pp})$).  Finally, $\mathcal{V}_{\pp}$ is clearly closed under directed homotopy colimits, as a directed homotopy colimit commutes with localization.
	\end{proof}
	As in the Neeman's proof of the telescope conjecture for localizing pairs in \cite{N}, we will use the Matlis' theory of injectives in a crucial way. For this reason, we state explicitly the structural result for injective modules in the setting of a commutative noetherian ring:
	\begin{thm}\label{T:Matlis}\emph{(\cite{M})}
		Let $R$ be a commutative noetherian ring. Then any injective module is isomorphic to a direct sum of indecomposable injective modules of form $E(R/\pp)$, where $\pp \in \Spec(R)$. Furthermore, each of the modules $E(R/\pp)$ admits a filtration by $\kappa(\pp)$-modules.
	\end{thm}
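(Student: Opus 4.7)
I plan to follow the classical argument due to Matlis, which divides cleanly into a decomposition part and a structural part for the indecomposable summands.

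First I would establish the ``noetherian lemma'': over a commutative noetherian ring $R$, any direct sum of injective modules is again injective. This I would prove via Baer's criterion: to lift a map $I \to \bigoplus_{\alpha} E_\alpha$ from an ideal $I$ to $R$, one uses that $I$ is finitely generated so that the image lies in a finite subsum $\bigoplus_{\alpha \in F} E_\alpha$, which is injective as a finite direct sum of injectives.

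Next, given any injective $R$-module $E$, I would apply Zorn's lemma to the family of collections $\{E_i\}$ of indecomposable injective submodules of $E$ whose internal sum is direct, and pick a maximal such collection. By the lemma above, the resulting direct sum $E' = \bigoplus_i E_i$ is injective, hence splits off: $E = E' \oplus E''$. To see $E'' = 0$, note that any nonzero $R$-module over a noetherian ring has an associated prime $\pp$, providing an embedding $R/\pp \hookrightarrow E''$; then its injective envelope $E(R/\pp)$ inside $E''$ is an indecomposable injective submodule (indecomposable because $R/\pp$ is essential and a nontrivial direct sum decomposition would intersect $R/\pp$ trivially in one summand), contradicting maximality. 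For the classification of indecomposables, I would run the same argument: any indecomposable injective $E$ contains $R/\pp$ for some associated prime $\pp$, and then $E(R/\pp) \hookrightarrow E$ is an injective (hence split) submodule, forcing $E \cong E(R/\pp)$ by indecomposability.

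For the filtration claim, the key observation I would prove first is that multiplication by any $s \in R \setminus \pp$ on $E(R/\pp)$ is bijective: it is injective because its kernel $K$ would intersect $R/\pp$ nontrivially if nonzero (contradicting $s \notin \pp$), and then its image $sE(R/\pp) \cong E(R/\pp)$ is an injective, hence split, submodule of the indecomposable $E(R/\pp)$, so it is the whole module. Thus $E(R/\pp)$ is canonically a module over $R_\pp$ with maximal ideal $\mm = \pp R_\pp$. I would then set
$$F_n = \{x \in E(R/\pp) \mid \pp^n x = 0\} = \{x \in E(R/\pp) \mid \mm^n x = 0\}.$$
The union $\bigcup_n F_n$ equals $E(R/\pp)$: for any $x$, the cyclic $R_\pp$-module $R_\pp \cdot x$ is noetherian with all associated primes contained in $\mathrm{Ass}(E(R/\pp))=\{\pp\}$, hence is $\mm$-primary, and Krull's lemma gives $\mm^n x = 0$ for some $n$. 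Finally, each quotient $F_{n+1}/F_n$ is annihilated by $\pp$ and by the $R_\pp$-action extends to a module over $R_\pp/\mm = \kappa(\pp)$, i.e., a $\kappa(\pp)$-vector space.

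The main obstacle I anticipate is establishing that $E(R/\pp)$ is $\pp$-primary (i.e., $E(R/\pp) = \bigcup_n F_n$); essentiality of $R/\pp$ alone is not enough, and the argument genuinely needs both the noetherianity (to ensure cyclic submodules have a primary structure) and the fact that $R \setminus \pp$ acts invertibly (to pass from annihilation modulo $s$ to strict annihilation). All remaining steps are then book-keeping with Zorn's lemma and injective envelopes.
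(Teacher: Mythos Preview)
The paper does not prove this theorem at all; it simply states it with a citation to Matlis's original paper \cite{M} and uses it as a black box in the proof of Theorem~\ref{T:TC}. Your proposal is the standard Matlis argument and is correct as written: the decomposition via Zorn's lemma and the noetherian injectivity-of-direct-sums lemma, the identification of indecomposable injectives with $E(R/\pp)$ via associated primes, and the $\pp$-primary filtration of $E(R/\pp)$ are exactly the classical steps. There is nothing to compare against in the paper itself.
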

\begin{thm}\label{T:TC}
		Let $R$ be a commutative noetherian ring. If $(\mathcal{U},\mathcal{V})$ is a bounded below homotopically smashing t-structure in $\Der(R)$, then it is compactly generated.
	\end{thm}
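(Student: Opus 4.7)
By Lemma~\ref{L:boundedbelow}, it suffices to verify the following consequence of condition (ii) of that lemma: for every $X \in \mathcal{V}$ with $k = \inf(X)$, the stalk $E(H^k(X))[-k]$ belongs to $\mathcal{V}$ (the homotopically smashing assumption is already given). Fix such an $X$ and set $M := H^k(X)$, $E := E(M)$. The plan is to reduce, via the Matlis structure of $E$ and localization, to the single problem of producing a residue field stalk $\kappa(\mathfrak{p})[-k]$ in $\mathcal{V}$, and to solve that problem by a rigidity-and-splitting argument using $\RHom_R(\kappa(\mathfrak{p}),-)$.

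By Theorem~\ref{T:Matlis}, $E \cong \bigoplus_{\mathfrak{p} \in \Ass(M)} E(R/\mathfrak{p})^{(\mu_\mathfrak{p})}$ and each $E(R/\mathfrak{p})$ admits a filtration whose successive quotients are $\kappa(\mathfrak{p})$-vector spaces. Since $\mathcal{V}$ is closed under direct products (Proposition~\ref{P:properties}), under extensions, and — by the homotopically smashing hypothesis — under directed homotopy colimits, and since any direct sum is the directed colimit of its finite subsums (which coincide with finite products in $\ModR$), the problem reduces to showing $\kappa(\mathfrak{p})[-k] \in \mathcal{V}$ for each $\mathfrak{p} \in \Ass(M)$. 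Fixing such $\mathfrak{p}$ and localizing via Lemma~\ref{L:loc}, the pair $(\mathcal{U}_\mathfrak{p}, \mathcal{V}_\mathfrak{p})$ in $\Der(R_\mathfrak{p})$ is again bounded below and homotopically smashing, $X_\mathfrak{p} \in \mathcal{V}_\mathfrak{p}$, and $\kappa(\mathfrak{p}) \hookrightarrow H^k(X_\mathfrak{p})$. Because $\kappa(\mathfrak{p})[-k]$ is $\mathfrak{p}$-local and $\mathcal{V}_\mathfrak{p} \subseteq \mathcal{V}$, membership in the two classes is equivalent, so we may assume $R$ is local with maximal ideal $\mathfrak{m}$ and residue field $\kappa$, and $\kappa \hookrightarrow H^k(X)$.

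For the crucial step, set $Y := \RHom_R(\kappa, X)$. Since $\kappa \in \Der^{\leq 0}$, Proposition~\ref{P:rigid}(ii) gives $Y \in \mathcal{V}$. The hypercohomology spectral sequence $E_2^{p,q} = \ext^p_R(\kappa, H^q(X)) \Rightarrow H^{p+q}(Y)$ is concentrated in the slot $(p,q) = (0,k)$ for total degrees $\leq k$, because $H^{<k}(X) = 0$ and $\ext^{<0} = 0$; both incoming and outgoing differentials at that slot have source or target in a vanishing region. Hence $\inf(Y) = k$ and $H^k(Y) \cong \Hom_R(\kappa, H^k(X))$, which is nonzero thanks to the given embedding. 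The key observation is that $Y$ lifts naturally to an object of $\Der(\kappa)$ via the $\kappa$-action on the first argument of $\RHom$. Since $\kappa$ is a field, every object of $\Der(\kappa)$ splits as $\bigoplus_i H^i(-)[-i]$, so $Y \simeq \bigoplus_i H^i(Y)[-i]$ in $\Der(\kappa)$. The additive triangulated forgetful functor $\Der(\kappa) \to \Der(R)$ preserves split idempotents, so this decomposition carries over to $\Der(R)$. Closure of $\mathcal{V}$ under direct summands gives $H^k(Y)[-k] \in \mathcal{V}$, and since $\kappa$ is a direct summand of the nonzero $\kappa$-vector space $H^k(Y)$, we conclude $\kappa[-k] \in \mathcal{V}$.

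The point I expect will require the most care is the transfer of the decomposition from $\Der(\kappa)$ to $\Der(R)$: the forgetful functor is not fully faithful (for instance $\ext^n_R(\kappa,\kappa)$ may be nonzero for $n \geq 2$ while $\ext^n_\kappa(\kappa,\kappa) = 0$ for $n > 0$), so one can have non-split extensions in $\Der(R)$ between complexes whose cohomology consists of $\kappa$-modules, and merely having $\mathfrak{m}$ act by zero on an object of $\Der(R)$ is not enough to force its Postnikov tower to split. It is therefore essential that $Y$ genuinely lifts through the forgetful functor, not just that its cohomology modules are killed by $\mathfrak{m}$; given that it does lift, the additivity of the functor suffices to transport the idempotent decomposition.
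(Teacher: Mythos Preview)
Your proof is correct and follows essentially the same route as the paper: reduce via Lemma~\ref{L:boundedbelow} and Matlis' theorem to showing $\kappa(\mathfrak{p})[-k]\in\mathcal{V}$ for each $\mathfrak{p}\in\Ass(H^k(X))$, localize via Lemma~\ref{L:loc}, apply Proposition~\ref{P:rigid}(ii) to $Y=\RHom_R(\kappa,X)$, and use that $Y$ lives in $\Der(\kappa)$ to split off $H^k(Y)[-k]$ as a direct summand. The only cosmetic differences are that the paper keeps the notation $\mathcal{V}_\mathfrak{p}$ rather than formally passing to a local ring, and that it computes $H^k(Y)\simeq\Hom_{\Der(R_\mathfrak{p})}(\kappa(\mathfrak{p})[-k],X_\mathfrak{p})\neq 0$ directly from the existence of the map $\kappa(\mathfrak{p})[-k]\to X_\mathfrak{p}$ (citing \cite[15.68.0.2]{Stacks}) rather than via the hypercohomology spectral sequence; your closing remark on why the splitting transfers from $\Der(\kappa)$ to $\Der(R)$ is a point the paper leaves implicit.
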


	\begin{proof}
			Since $(\mathcal{U},\mathcal{V})$ is bounded below, it is by Lemma~\ref{L:boundedbelow} enough to show that for any complex $X \in \mathcal{V}$, we have 
			\begin{equation}\label{E92}
			E(H^l(X))[-l] \in \mathcal{V},
			\end{equation}
			where $l=\inf(X)$. Since $\mathcal{V}$ is closed under directed homotopy colimits, and thus in particular under direct sums and direct limits of modules inhabiting some cohomological degree of $\mathcal{V}$ as stalk complexes, it is enough to show that $\kappa(\pp)[-l] \in \mathcal{V}$, whenever $\pp \in \Ass(H^l(X))$. Indeed, by Theorem~\ref{T:Matlis} we know that $E(H^l(X))$ is isomorphic to a direct sum of indecomposable injectives of form $E(R/\pp)$, where $\pp \in \Ass(H^l(X))$, and each of these is filtered by $\kappa(\pp)$-modules. Since any $\kappa(\pp)$-module is isomorphic to a direct sum of copies of $\kappa(\pp)$, we conclude that $\kappa(\pp)[-l] \in \mathcal{V}$ for each $\pp \in \Ass(H^l(X))$ implies (\ref{E92}).

			Fix $\pp \in \Ass(H^l(X))$. By Lemma~\ref{L:loc}, $(\mathcal{U}_{\pp},\mathcal{V}_{\pp})$ is a homotopically smashing t-structure, and obviously $X_{\pp} := X \otimes_R R_{\pp} \in \mathcal{V}_p \subseteq \mathcal{V}$. We show that $\kappa(\pp)[-l] \in \mathcal{V}_{\pp}$. By Proposition~\ref{P:rigid}, the complex $Y=\RHom_R(\kappa(\pp),X_{\pp}) \in \mathcal{V}_{\pp}$. Since $R/\pp$ embeds into $H^l(X)$, the residue field $\kappa(\pp)$ embeds into $H^l(X_{\pp}) \simeq H^l(X)_{\pp}$. Since $l=\inf(X_{\pp})$, there is a map of complexes $\kappa(\pp)[-l] \rightarrow X_{\pp}$ inducing the embedding $\kappa(\pp) \subseteq H^l(X_{\pp})$ in the $l$-th cohomology. Using \cite[15.68.0.2]{Stacks}, it follows that 
			$$0 \neq \Hom_{\Der(R_{\pp})}(\kappa(\pp)[-l],X_{\pp}) \simeq \Hom_{\Der(R_{\pp})}(\kappa(\pp),X_{\pp}[l]) \simeq$$ 
			$$\simeq H^l \RHom_{R_{\pp}}(\kappa(\pp),X_{\pp}) = H^l(Y).$$
			Because $Y \in \Der(\kappa(\pp)) \subseteq \Der(R_{\pp})$, $Y$ is quasi-isomorphic in $\Der(R_{\pp})$ to a complex of $\kappa(\pp)$-modules. As $\kappa(\pp)$ is a field, $Y$ is --- up to quasi-isomorphism --- a split complex, and thus $H^l(Y)[-l]$ is a direct summand of $Y$. Therefore, $H^l(Y)[-l] \in \mathcal{V}_{\pp}$. As $H^l(Y)[-l] \neq 0$ is a vector space over $\kappa(\pp)$, we conclude that $\kappa(\pp)[-l] \in \mathcal{V}_{\pp} \subseteq \mathcal{V}$, as desired.
	\end{proof}

\section{Compactly generated t-structures in $\Der(R)$}
	The purpose of this section is to extend Theorem~\ref{T:bounded} to all compactly generated t-structures in $\Der(R)$, that is, without any assumption of a cohomological bound. To do this, we need to adopt a different approach, which we shortly explain now. Following \cite{SP}, compactly generated t-structures (and by duality, also compactly generated co-t-structures) correspond to full subcategories of $\Der^c(R)$ closed under extensions, direct summands, and suspensions. This indicates that, in order to classify compactly generated t-structures, it is enough to consider the generation of compact objects in the aisle. In doing so, we will to a large extent follow the approach of \cite[\S 2]{KP}, where the authors considered the stable case of localizing pairs.

	However, it is necessary to point out that the form of the classification obtained by this approach is in a sense weaker than Theorem~\ref{T:bounded} or \cite{AJS}. While we are able to show that any compactly generated t-structure corresponds to a Thomason filtration and is generated by suspensions of Koszul complexes, we will not show in general that the aisle is determined cohomology-wise by supports. Indeed, such a result seems to be unknown to the literature even for localizing pairs. For commutative noetherian rings, this stronger description is obtained in \cite{AJS}. Another known case is the localizing pair associated to a ``basic'' Thomason set $P = V(I)$ based on a single finitely generated ideal $I$ (\cite{N}, \cite{DG}, \cite{Pos}). We provide a version of the latter result for aisles in \ref{S:localizations}, which will also allow for a description of the coaisle of a compactly generated t-structure via \v{C}ech cohomology. 
	
	Given an object $X \in \Der(R)$, by a \EMP{non-negative suspension} we mean an object isomorphic in $\Der(R)$ to $X[i]$ for some $i \geq 0$.
	\begin{lem}\label{L:recipe}
			Suppose that $\mathcal{S}$ is a set of compact objects, and let $C$ be a compact object. If $C \in \pa(\mathcal{S})$, then $C$ is quasi-isomorphic to a direct summand of an object $F$, where $F$ is an $n$-fold extension of finite direct sums of non-negative suspensions of objects of $\mathcal{S}$ for some $n > 0$.
	\end{lem}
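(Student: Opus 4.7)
The plan is to filter $\aisle(\mathcal{S})$ by a chain of subclasses built from $\mathcal{S}$ by finitely iterated extensions, and then use compactness of $C$ to show it lives in the summand closure of such a finite stage. Define inductively $\mathcal{F}_0$ as the class consisting of the zero object together with all finite direct sums of non-negative suspensions of objects of $\mathcal{S}$, and, for $n \geq 1$, let $\mathcal{F}_n$ consist of those $Y \in \Der(R)$ admitting a triangle $X \to Y \to Z \to X[1]$ with $X \in \mathcal{F}_{n-1}$ and $Z \in \mathcal{F}_0$. Set $\mathcal{F} := \bigcup_n \mathcal{F}_n$; every object of $\mathcal{F}$ is compact, and the lemma amounts to exhibiting $C$ as a direct summand of some $F \in \mathcal{F}_n$ with $n > 0$ (the latter inequality can always be forced by a trivial extension of the form $0 \to F \to F \to 0$).

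\textbf{Aisle as directed homotopy colimit closure of $\mathcal{F}$.} The key structural claim is that $\aisle(\mathcal{S})$ coincides, up to direct summands, with the class of objects of the form $\dhocolim_{i \in I} F_i$, where $(F_i)_{i \in I}$ runs over directed diagrams valued in $\mathcal{F}$. One inclusion follows from Proposition~\ref{P:properties}(i) and the closure of any aisle under direct summands. For the reverse inclusion, I would verify that the described class contains $\mathcal{S}$, is closed under non-negative suspensions, and is closed under arbitrary coproducts; this last point uses the identity $\bigoplus_{i \in I} Y_i \simeq \dhocolim_{J \subseteq I \text{ finite}} \bigoplus_{i \in J} Y_i$ together with the closure of $\mathcal{F}$ under finite direct sums. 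The essential remaining point is closure under extensions: an incoherent triangle $A \to B \to C \to A[1]$ whose outer terms are presented as directed homotopy colimits of $\mathcal{F}$-diagrams must be shown to admit a coherent lift exhibiting $B$ itself as a directed homotopy colimit of triangles internal to $\mathcal{F}$. This step is handled via the derivator structure on $\Der(R)$ (compare \cite[\S 5]{Sto} and \cite[Proposition 11.3]{Kel}).

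\textbf{Extracting the summand.} Granting the structural claim, present $C$ as a direct summand of some $X \simeq \dhocolim_{i \in I} F_i$ with $F_i \in \mathcal{F}$, and let $\iota \colon C \to X$ and $\pi \colon X \to C$ be splitting maps satisfying $\pi \iota = \mathrm{id}_C$. The compactness characterization in Proposition~\ref{P:properties}(ii) gives $\Hom_{\Der(R)}(C, X) \simeq \varinjlim_{i \in I} \Hom_{\Der(R)}(C, F_i)$, so $\iota$ factors as $C \xrightarrow{\alpha} F_{i_0} \xrightarrow{\beta} X$ for some $i_0 \in I$. Then $\pi \beta \alpha = \mathrm{id}_C$ displays $C$ as a direct summand of $F_{i_0} \in \mathcal{F}_n$ for some $n$, which we may take to be positive as noted above.

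\textbf{Main obstacle.} The bottleneck is the extension-closure step in the structural claim: lifting an incoherent triangle of directed homotopy colimits to a coherent triangle of $\mathcal{F}$-valued diagrams. This requires some coherence machinery, either via derivators or the explicit comparison between $\Der(\ModR)^I$ and $\Der(\ModR^I)$ recalled in the preliminaries. Once available, the compactness argument in the last paragraph closes things out cleanly, which is parallel in spirit to the stable (localizing pair) version carried out in \cite[\S 2]{KP}.
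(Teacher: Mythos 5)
Your overall strategy is not the one the paper uses, and it has a genuine gap: the ``structural claim'' that $\aisle(\mathcal{S})$ coincides, up to direct summands, with the class of directed homotopy colimits of $\mathcal{F}$-valued diagrams is never proved, and it is at least as strong as the lemma itself (granting it, the lemma follows in three lines from your factorization argument, so all the difficulty has been relocated into this assertion). The step you flag as the ``essential remaining point'' --- extension closure --- is exactly where the argument breaks down as written: given a triangle $A \to B \to C \to A[1]$ with $A$ and $C$ presented as directed homotopy colimits of $\mathcal{F}$-diagrams over two unrelated posets, you must produce a single directed system of triangles with terms in $\mathcal{F}$ whose colimit is the given triangle. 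The references you point to do not supply this: \cite[Proposition 11.3]{Kel} concerns lifting incoherent diagrams of shape $\omega$ only (for general directed posets such lifting is problematic), and \cite[\S 5]{Sto} gives the formalism of directed homotopy colimits but no such gluing statement for triangles. A correct treatment would need a genuine coherence argument (reindexing to a common cofinal poset, using compactness of the objects of $\mathcal{F}$ to realize the connecting map $C \to A[1]$ at finite stages, and checking compatibility), plus the extra wrinkle that you actually need the \emph{summand closure} of the dhocolim class to be extension closed; none of this is carried out, and it is not routine.

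The paper avoids all of this by arguing about the single object $C$ rather than characterizing the whole aisle: by \cite[Theorem 12.3]{Kel}, any $C \in \aisle(\mathcal{S})$ is a directed homotopy colimit of a countable tower $X_0 \to X_1 \to \cdots$ in which each $X_n$ is an $n$-fold extension of (possibly infinite) coproducts of non-negative suspensions of objects of $\mathcal{S}$; then compactness of $C$ is fed into \cite[Proposition 3.13]{R}, which both stops the tower at a finite stage and replaces the infinite coproducts by finite sub-coproducts, exhibiting $C$ as a direct summand of an $n$-fold extension $F$ of finite direct sums of non-negative suspensions. Your final ``extracting the summand'' paragraph is correct as far as it goes (it is the easy half, factoring the split monomorphism through a finite stage), but the hard half --- passing from arbitrary coproducts to finite ones inside the tower --- is precisely what your unproven structural claim was supposed to deliver, so the proposal as it stands does not constitute a proof.
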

	\begin{proof}
			Using \cite[Theorem 12.3]{Kel}, we know that $C \simeq \dhocolim_{n \geq 0} X_n$, where $X_n$ is an $n$-fold extension of coproducts of non-negative suspensions of objects from $\mathcal{S}$. This means that there is for each $n > 0$ a cardinal $\Lambda_n$ and a sequence $(S_\lambda^n \mid \lambda < \Lambda_n)$ of non-negative suspensions of objects from the set $\mathcal{S}$ such that there is a triangle 
			$$X_{n-1} \rightarrow X_n \rightarrow \bigoplus_{\lambda < \Lambda_n} S_\lambda^n \rightarrow X_{n-1}[1].$$
			Since $C$ is compact, we can use \cite[Proposition 3.13]{R} to infer that there is an integer $n > 0$, and an object $F$, which is obtained as an $n$-fold extension of objects of form $\bigoplus_{\lambda \in A_k}S_\lambda^k$ for $k=1,\ldots,n$, where $A_k$ is a finite subset of $\Lambda_k$, and such that $C$ is quasi-isomorphic to a direct summand of $F$. This concludes the proof.
	\end{proof}
	Recall that a complex $X$ is \EMP{finite} if the $R$-module $\bigoplus_{n \in \mathbb{Z}}H^n(X)$ is finitely generated. The fact that the generation of aisles by finite complexes over a commutative noetherian ring is controlled by the support of cohomologies follows from a result due to Kiessling in the following way.
	\begin{prop}\label{P:acyclic}
			Let $R$ be a commutative noetherian ring, and let $X$ and $Y$ be two finite complexes over $R$. Suppose that for every $k \in \mathbb{Z}$ we have that
			$$\Supp(H^k(Y)) \subseteq \Supp(\bigoplus_{i \geq k}H^i(X)).$$
			Then $\aisle(Y) \subseteq \aisle(X)$.
	\end{prop}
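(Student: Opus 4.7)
The plan is to invoke Kiessling's classification of aisles generated by finite complexes over a commutative noetherian ring. Since $\aisle(Y)$ is by definition the smallest aisle containing $Y$, the inclusion $\aisle(Y) \subseteq \aisle(X)$ is equivalent to $Y \in \aisle(X)$, which is what I will establish.

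First I would set $\Phi(k) := \Supp(\bigoplus_{i \geq k} H^i(X))$ for each $k \in \mathbb{Z}$. Because $R$ is noetherian and $X$ is a finite complex, each $H^i(X)$ is finitely generated with Zariski-closed support $V(\Ann H^i(X))$; combined with the cohomological boundedness of $X$, this makes $\Phi(k)$ a finite union of Zariski-closed sets and forces $\Phi(k) = \emptyset$ for $k \gg 0$. Thus $\Phi$ is a bounded below Thomason filtration, and the hypothesis reads precisely as $Y \in \mathcal{U}_\Phi$.

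The easy half of the needed identification, $\aisle(X) \subseteq \mathcal{U}_\Phi$, is routine: $X$ lies in $\mathcal{U}_\Phi$ because $\Supp H^k(X) \subseteq \bigcup_{i \geq k} \Supp H^i(X) = \Phi(k)$, and $\mathcal{U}_\Phi$ is closed under suspensions (by monotonicity of $\Phi$), extensions, and coproducts (by the long exact sequence in cohomology and the fact that $\Supp$ distributes over the relevant operations); hence $\mathcal{U}_\Phi$ contains the smallest such subcategory generated by $X$.

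The decisive — and harder — reverse inclusion $\mathcal{U}_\Phi \subseteq \aisle(X)$ is the content of Kiessling's theorem, which crucially exploits noetherianity to reduce an arbitrary finitely generated module to iterated extensions of cyclic modules $R/\mathfrak{p}$ and then to exhibit each stalk $R/\mathfrak{p}[-k]$ (with $\mathfrak{p} \in \Phi(k)$) inside $\aisle(X)$. Granted this, one concludes $Y \in \mathcal{U}_\Phi = \aisle(X)$, completing the proof. The main obstacle is therefore to match the precise phrasing of Kiessling's result with the description of $\aisle(X)$ via $\mathcal{U}_\Phi$; should Kiessling's statement be formulated instead as a two-sided criterion ($\aisle(X) = \aisle(X')$ iff the support filtrations agree), one recovers the required conclusion via the auxiliary object $X \oplus Y$, whose support filtration coincides term-wise with $\Phi$ under our hypothesis, so that $Y \in \aisle(X \oplus Y) = \aisle(X)$.
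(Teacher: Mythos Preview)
Your proposal is correct and takes essentially the same approach as the paper: both invoke Kiessling's result \cite[Corollary~7.4]{Ki}. The paper's version is more direct --- it simply notes that Kiessling's statement places $Y$ in the smallest subcategory containing $X$ closed under extensions, cones, and coproducts, hence \emph{a fortiori} $Y \in \aisle(X)$ --- without your detour through $\mathcal{U}_\Phi$, but the substance is identical.
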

	\begin{proof}
			This follows from \cite[Corollary 7.4]{Ki}. Indeed, \cite[Corollary 7.4]{Ki} claims that the condition on supports above implies that $Y$ is contained in the smallest subcategory of $\Der(R)$ containing $X$ and closed under extensions, cones, and coproducts. Then \emph{a fortiori}, we have $Y \in \aisle(X)$ as desired.
	\end{proof}
	\begin{lem}\label{L:noeth}
			Let $R$ be a commutative noetherian ring, and $S \in \Der^c(R)$. Then there is $n > 0$, ideals $I_1,I_2,\ldots,I_n$ of $R$ and integers $s_1,\ldots,s_n$ such that $\pa(S) = \pa(R/I_1[s_1],\ldots,R/I_n[s_n])$.
	\end{lem}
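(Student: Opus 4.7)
The plan is to use the hypothesis that $R$ is noetherian together with Proposition~\ref{P:acyclic} (the support-containment criterion for aisle inclusion) applied to judicious choices of ideals extracted from the cohomologies of $S$.

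Since $S$ is compact in $\Der(R)$, it is quasi-isomorphic to a perfect complex, and because $R$ is noetherian this forces $H^i(S)$ to be a finitely generated $R$-module for every $i$, with only finitely many nonzero. Fix integers $a \leq b$ such that $H^i(S) = 0$ for $i \notin [a,b]$. For each $i \in [a,b]$, set $I_i := \Ann_R(H^i(S))$. Because $H^i(S)$ is finitely generated over a noetherian ring, $\Supp(H^i(S)) = V(I_i) = \Supp(R/I_i)$. This will provide the candidate list $(R/I_i[-i])_{i \in [a,b]}$, matching the form $R/I_j[s_j]$ of the statement with $s_j = -j$.

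The key step is to verify both inclusions via Proposition~\ref{P:acyclic}, noting that both $S$ and the stalks $R/I_i[-i]$ (as well as their finite direct sum) are finite complexes in the sense of the paper. For the inclusion $\pa(R/I_i[-i] : i \in [a,b]) \subseteq \pa(S)$, it suffices to show each $R/I_i[-i]$ lies in $\pa(S)$: the only nonzero cohomology of $Y = R/I_i[-i]$ sits in degree $i$ and equals $R/I_i$, whose support is $V(I_i) = \Supp(H^i(S)) \subseteq \Supp(\bigoplus_{j \geq i} H^j(S))$, so Proposition~\ref{P:acyclic} applies. For the reverse inclusion, apply Proposition~\ref{P:acyclic} with $Y = S$ and $X = \bigoplus_{i \in [a,b]} R/I_i[-i]$; here $H^k(X) = R/I_k$ for $k \in [a,b]$, so the support condition $\Supp(H^k(S)) = V(I_k) \subseteq \bigcup_{j \geq k} V(I_j)$ is automatic, giving $S \in \pa(X)$. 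Since $\pa(X)$ agrees with $\pa(R/I_a[-a], \ldots, R/I_b[-b])$ (aisles generated by a set contain arbitrary coproducts of the generators, in particular their finite direct sum, and conversely the summands lie in the aisle of the sum by closure under suspensions, extensions, and coproducts of the generators themselves), the two inclusions combine to yield the required equality.

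No step presents a serious obstacle; the whole argument is essentially a direct invocation of Proposition~\ref{P:acyclic} twice, with the finiteness hypothesis provided by the noetherian assumption on $R$ together with compactness of $S$. The only mild subtlety is making sure the direct sum $\bigoplus_i R/I_i[-i]$ qualifies as a finite complex in order to invoke Proposition~\ref{P:acyclic} from the right side, which is immediate because the sum is finite and each summand has finitely generated cohomology.
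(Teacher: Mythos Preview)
Your proposal is correct and follows essentially the same approach as the paper: define the ideals $I_i = \Ann_R(H^i(S))$, form the split complex $Y = \bigoplus_i R/I_i[-i]$, and invoke Proposition~\ref{P:acyclic} in both directions using that $\Supp(H^i(S)) = V(I_i)$. The paper compresses the two applications of Proposition~\ref{P:acyclic} into a single sentence, whereas you spell out each inclusion, but the argument is the same.
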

	\begin{proof}
			Because $R$ is commutative noetherian, the complex $C$ is finite, as well as the split complex $Y = \bigoplus_{k \in \mathbb{Z}} R/\Ann(H^k(C))[-k]$. Since $\Supp(H^k(C)) = V(\Ann H^k(C))$, we can use Proposition~\ref{P:acyclic} to infer that $\aisle(C) = \aisle(Y)$. Because $C$ is compact, only finitely many of its cohomologies do not vanish, and therefore $\aisle(C) = \aisle(R/I_1[s_1],\cdots,R/I_n[s_n])$ for ideals $I_1,\ldots,I_n$ and appropriate integers $s_1,\ldots,s_n$.
	\end{proof}
	In preparation for the main result, we need to prove some auxiliary observations about aisles generated by cyclic modules and Koszul complexes over any commutative ring.
\begin{lem}\label{L:generation}
		Let $R$ be a commutative ring, and $I$ be a finitely generated ideal.
		\begin{enumerate}
			\item[(i)] $\aisle(R/I) = \aisle(R/I^n)$ for any $n > 0$,
			\item[(ii)] $M \in \aisle(R/I)$ whenever $M$ is an $R$-module such that $\Supp(M) \subseteq V(I)$,
			\item[(iii)] $\aisle(K(I)) = \aisle(R/I)$.
		\end{enumerate}
	\end{lem}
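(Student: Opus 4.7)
The plan hinges on a sublemma which I will establish first: for every ideal $J$ of $R$, every $R/J$-module $M$ lies in $\aisle(R/J)$. I will prove this by taking a free resolution $F_\bullet \to M$ over $R/J$ (whose terms are coproducts of $R/J$), viewing $F_\bullet$ as a complex in $\Der(R)$ quasi-isomorphic to $M$ and concentrated in non-positive degrees, and writing $F_\bullet \simeq \Mcolim_n \sigma^{>-n} F_\bullet$ via (\ref{E:brutal}); each brutal truncation is a bounded complex of coproducts of $R/J$ and thus lies in $\aisle(R/J)$ by iterated extension-closure of suspensions, while the Milnor colimit lies in $\aisle(R/J)$ by Proposition~\ref{P:properties}(i).

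For (i), I apply this sublemma twice. Since $R/I$ is an $R/I^n$-module via the surjection $R/I^n \twoheadrightarrow R/I$, the sublemma puts $R/I$ in $\aisle(R/I^n)$. Conversely, the filtration $R/I^n \supseteq I/I^n \supseteq \cdots \supseteq I^{n-1}/I^n \supseteq 0$ has successive quotients $I^k/I^{k+1}$ annihilated by $I$; each is an $R/I$-module and by the sublemma lies in $\aisle(R/I)$, whence iterated extension-closure places $R/I^n$ in $\aisle(R/I)$.

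For (ii), I first treat the finitely generated case: if $M$ is finitely generated with $\Supp(M) = V(\Ann(M)) \subseteq V(I)$, then since $I$ is finitely generated some power $I^n$ is contained in $\Ann(M)$, so $M$ is an $R/I^n$-module, hence in $\aisle(R/I^n) = \aisle(R/I)$ by the sublemma and (i). For arbitrary $M$ with $\Supp(M) \subseteq V(I)$, I write $M$ as the directed union of its finitely generated submodules, each supported in $V(I)$ and hence in $\aisle(R/I)$; since direct limits of modules realize directed homotopy colimits in $\Der(R)$, closure of aisles under directed homotopy colimits (Proposition~\ref{P:properties}(i)) gives $M \in \aisle(R/I)$.

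For (iii), the inclusion $\aisle(K(I)) \subseteq \aisle(R/I)$ is the easier direction: each $H^j(K(I))$ is killed by the generators of $I$ and so is an $R/I$-module supported in $V(I)$, hence in $\aisle(R/I)$ by (ii); climbing the Postnikov tower of $K(I)$ from $\tau^{\geq 0} K(I) = R/I$ upward via successive extensions by non-negative suspensions of the $H^j(K(I))$ places $K(I)$ in $\aisle(R/I)$. The reverse inclusion $R/I \in \aisle(K(I))$ is the main obstacle, and my strategy is to invoke Proposition~\ref{P:rigid}(i): since $R/I \in \Der^{\leq 0}$ and $K(I) \in \aisle(K(I))$, we get $R/I \otimes_R^\mathbf{L} K(I) \in \aisle(K(I))$. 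Using the tensor decomposition $K(I) = K(x_1) \otimes_R \cdots \otimes_R K(x_r)$ and the fact that each $x_i$ acts as zero on $R/I$, every differential of $R/I \otimes_R^\mathbf{L} K(I)$ vanishes, so the complex splits in $\Der(R)$ as $\bigoplus_{j=0}^{r} (R/I[j])^{\binom{r}{j}}$; since aisles of t-structures are closed under direct summands, the $j=0$ summand $R/I$ lies in $\aisle(K(I))$.
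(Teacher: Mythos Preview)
Your proof is correct and follows essentially the same approach as the paper. The sublemma, part (i), and part (iii) are virtually identical to the paper's argument (including the use of Proposition~\ref{P:rigid} to show $R/I \otimes_R K(I) \in \aisle(K(I))$ and then splitting off $R/I$ as a summand). The only minor divergence is in (ii): the paper builds a resolution of $M$ by coproducts of cyclic modules $R/I^n$ (using that $M$ lies in the hereditary torsion class of $V(I)$) and then applies the Milnor-colimit-of-brutal-truncations trick once more, whereas you reduce to the finitely generated case via $I^n \subseteq \Ann(M)$ and then pass to an arbitrary $M$ by a directed homotopy colimit of finitely generated submodules; both routes are standard and equally short.
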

	\begin{proof}
	\item[(i)] First, for any ideal $J$, any $R/J$-module $M$ is quasi-isomorphic to a complex of free $R/J$-modules concentrated in non-positive degrees. Using brutal truncations and Milnor colimits, we see that such $M$ belongs to $\aisle(R/J)$. In particular, $R/I \in \aisle(R/I^n)$. On the other hand, as $R/I^n$ admits a finite filtration by $R/I$-modules, $R/I^n \in \aisle(R/I)$.
	\item[(ii)] Since $\Supp(M) \subseteq V(I)$, $M$ is in the hereditary torsion class corresponding to the Thomason set $V(I)$. Therefore, there is an epimorphism $\bigoplus_{n > 0} (R/I^n) ^{(\varkappa_n)} \rightarrow M$ for some cardinals $\varkappa_n$. Also, the kernel $K$ of this epimorphism satisfies $\Supp(K) \subseteq V(I)$ again. Inductively, we can construct a complex concentrated in non-positive degrees with coordinates consisting of direct sums of copies of cyclic modules $R/I^n, n > 0$. By $(i)$, and using the Milnor colimits of brutal truncations again, this shows that $M \in \aisle(R/I)$.
	\item[(iii)] By \cite[Proposition 1.6.5b]{BH}, $\Supp(H^n(K(I))) \subseteq V(I)$ for all $n \in \mathbb{Z}$. Since $K(I)$ is a bounded complex, $K(I)$ can be obtained by a finite number of extensions from stalks of modules supported on $V(I)$ concentrated in non-positive degrees. This shows that $K(I) \in \aisle(R/I)$ by $(ii)$. 

		We know by \cite[p. 360, 8.2.7]{NC} that $H^0(K(I)) \simeq R/I$. Using Proposition~\ref{P:rigid}, $K(I) \otimes_R R/I \in \aisle(K(I))$. Inspecting the definition of a Koszul complex, $K(I) \otimes_R R/I$ is a split complex of $R/I$-modules, and $H^0(K(I) \otimes_R R/I) \simeq R/I$. Therefore, $R/I \in \aisle(K(I))$.
	\end{proof}
	The following proof is a version of the argument \cite[Proposition 2.1.13]{KP} adjusted for aisles.
\begin{lem}\label{L:replacement}
			Let $R$ be an arbitrary commutative ring, and $S \in \Der^c(R)$. Then there is $n > 0$, ideals $I_1,I_2,\ldots,I_n$ of $R$ and integers $s_1,\ldots,s_n$ such that $\pa(S) = \pa(K(I_1)[s_1],\ldots,K(I_n)[s_n])$.
	\end{lem}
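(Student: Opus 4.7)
The proof adapts the inductive strategy of Lemma~\ref{L:noeth} to the non-noetherian setting, replacing the appeal to Kiessling's theorem (Proposition~\ref{P:acyclic}), unavailable here, by an explicit induction via brutal truncations combined with Lemma~\ref{L:recipe}. The main tools are Lemma~\ref{L:generation}, which gives the dictionary $\pa(R/I) = \pa(K(I))$ for finitely generated $I$ along with $\pa$-generation of modules from their supports, and Lemma~\ref{L:recipe}, which controls how compact objects inhabit a compactly generated aisle. Since $S \in \Der^c(R)$, we may assume it is a bounded complex of finitely generated projective modules $S: P^{-N} \to \cdots \to P^0$.

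I would proceed by induction on the width $N+1$, using the brutal truncation triangle
\[
\sigma^{>a}S \to S \to P^a[-a] \to \sigma^{>a}S[1]
\]
with $a = -N$ the lowest nonzero degree, noting that $\sigma^{>a}S$ is again perfect of strictly smaller width. The base case treats a stalk of a finitely generated projective, $S = P[-k]$: since $P$ is finitely presented, its support equals $V(J)$ for a finitely generated ideal $J$ (e.g.\ the $0$-th Fitting ideal of $P$), and Lemma~\ref{L:generation}(ii),(iii) gives $\pa(P[-k]) \subseteq \pa(K(J)[-k])$. The reverse inclusion uses Proposition~\ref{P:rigid}(i) applied with $R/J$ to place $P \otimes_R R/J$ in $\pa(P)$, combined with the structural fact that $P$ becomes a faithfully flat (or at least appropriately generating) $R/J$-module once restricted along $R \twoheadrightarrow R/J$, from which $R/J$, and hence $K(J)$ by Lemma~\ref{L:generation}(iii), is extracted inside $\pa(P)$ as a summand of a suitable extension.

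For the inductive step, apply the hypothesis to $\sigma^{>a}S$ (of strictly smaller width) and to the stalk $P^a[-a]$ (by the base case), obtaining a finite list of Koszul generators $K(I_1)[s_1],\ldots,K(I_n)[s_n]$. Extension closure of aisles applied to the above triangle immediately yields the inclusion $\pa(S) \subseteq \pa(K(I_1)[s_1],\ldots,K(I_n)[s_n])$. For the reverse inclusion I would invoke Lemma~\ref{L:recipe}: each compact Koszul generator lies in an aisle generated by $S$ together with the other Koszul generators coming from a smaller subcomplex of $S$, so Lemma~\ref{L:recipe} realises it as a summand of a finite iterated extension of non-negative suspensions drawn from that generating set; a careful unwinding along the induction eliminates all the auxiliary Koszul generators and shows each $K(I_j)[s_j]$ lies in $\pa(S)$. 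The principal obstacle is the base case---specifically the direction $R/J \in \pa(P)$ for the Fitting ideal $J$---because, without Kiessling, we cannot freely exchange a finitely generated projective for a cyclic module sharing its support. This is exactly where Proposition~\ref{P:rigid} and the algebraic structure of $P$ as an $R/J$-module do the essential work.
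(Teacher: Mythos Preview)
Your induction has a genuine gap in the reverse inclusion of the inductive step. The Koszul complexes you produce from the brutal pieces $\sigma^{>a}S$ and $P^a[-a]$ depend only on the \emph{terms} of $S$, not on its differential, so they can generate a strictly larger aisle than $\aisle(S)$. Concretely, take $S = (R \xrightarrow{\mathrm{id}} R)$ concentrated in degrees $-1,0$: then $S \simeq 0$ and $\aisle(S) = 0$, but your procedure assigns to $P^0 = R$ and $P^{-1}[1] = R[1]$ the Koszul $K(0)$, and $\aisle(K(0),K(0)[1]) = \aisle(R) = \Der^{\leq 0}$. No ``careful unwinding'' will place $R$ in $\aisle(S) = 0$. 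More generally, from the brutal truncation triangle one only obtains $\sigma^{>a}S \in \aisle(S, P^a[-a-1])$ and $P^a[-a] \in \aisle(S, \sigma^{>a}S[1])$; neither inclusion allows you to eliminate the auxiliary generator, so the Koszuls for the pieces need not lie in $\aisle(S)$. (Incidentally, the base case you flag as the main obstacle is actually fine: for finitely generated projective $P$ with $\Supp(P) = V(e)$, $e$ idempotent, the module $R/eR$ is a direct summand of some power $P^n$ since $P$ is a generator over $R/eR$, whence $R/eR \in \aisle(P)$ directly.)

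The paper's proof takes an entirely different route which circumvents this problem. It does \emph{not} abandon Kiessling's result; instead it reduces to the noetherian case. By \cite[Lemma~A.2]{N} there is a noetherian subring $T \subseteq R$ and $S' \in \Der^c(T)$ with $S' \otimes_T R \simeq S$. Over $T$, Lemma~\ref{L:noeth} (which legitimately invokes Proposition~\ref{P:acyclic}) gives $\aisle(S') = \aisle(K(J_1)[s_1],\ldots,K(J_n)[s_n])$ in $\Der(T)$. Now Lemma~\ref{L:recipe} is applied over $T$, in both directions, to exhibit each generator as a summand of a \emph{finite} iterated extension of non-negative suspensions of the others; these finite extension diagrams of compact objects survive the base change $-\otimes_T R$, yielding the same containments over $R$ with $I_i = J_i R$ (and $K(J_i) \otimes_T R \simeq K(I_i)$). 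The essential idea is that the \emph{correct} Koszul complexes are found over the noetherian subring, where cohomological support controls aisle generation, and only the finitary witnesses supplied by Lemma~\ref{L:recipe} are transported to $R$.
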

	\begin{proof}
			First, by \cite[Lemma A.2]{N}, there is a noetherian subring $T$ of $R$, and a compact object $S' \in \Der^c(T)$ such that $S' \otimes_T R = S$. Using Lemma~\ref{L:noeth} together with Lemma~\ref{L:generation}(iii), there are ideals $J_1,\ldots,J_n$ and integers $s_1,\ldots,s_n$ such that $\aisle(S') = \aisle(K(J_1)[s_1],\ldots,K(J_n)[s_n])$ in $\Der(T)$. By Lemma~\ref{L:recipe}, $K(J_i)[s_i]$ can be obtained as a direct summand in an object $F_i \in \Der^c(T)$, where $F_i$ is an $n_i$-fold extension of finite direct sums of non-negative suspensions of copies of $S'$, for each $i=1,\ldots,n$ and for some $n_i > 0$. Applying $- \otimes_T^\mathbf{L} R$ onto $F_i$, we conclude that $K(J_i)[s_i] \otimes_T R$ is a direct summand in $F_i \otimes_T R$, an $n_i$-fold extension of finite direct sums of non-negative suspensions of copies of $S = S' \otimes_T R$ (note that by compactness of all of the involved objects, we can compute the derived tensor product via the ordinary tensor product). One can see directly from the definition of a Koszul complex that, when choosing the appropriate generating sets, $K(J_i) \otimes_T R$ is isomorphic to a Koszul complex $K(I_i)$, where $I_i = J_iR$ for each $i=1,\ldots,n$.

		We proved that $K(I_i)[s_i] \in \aisle(S)$ in $\Der(R)$ for each $i=1,\ldots,n$. Repeating the same argument using Lemma~\ref{L:recipe} with the roles of the two aisles reversed shows that also $S \in \aisle(K(I_1)[s_1],\ldots,K(I_n)[s_n])$.
	\end{proof}
	
	\begin{thm}\label{T:full}
		Let $R$ be a commutative ring. Then there is a 1-1 correspondence:
		$$\left \{ \begin{tabular}{ccc} \text{ Thomason filtrations $\Phi$} \\ \text{of $\Spec(R)$} \end{tabular}\right \}  \leftrightarrow  \left \{ \begin{tabular}{ccc} \text{Aisles $\mathcal{U}$ of compactly generated } \\ \text{t-structures in $\Der(R)$} \end{tabular}\right \}.$$
		The correspondence is given by mutually inverse assignments
		$$A: \Phi \mapsto \mathcal{U} = \aisle(K(I)[-n] \mid V(I) \subseteq \Phi(n) ~\forall n \in \mathbb{Z}\},$$
		and
		$$B: \mathcal{U} \mapsto \Phi(n) = \bigcup \{V(I) \mid I ~\text{ f.g. ideal such that } R/I[-n] \in \mathcal{U}\}.$$
	\end{thm}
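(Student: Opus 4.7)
The plan is to verify that $A$ and $B$ are mutually inverse. Well-definedness is immediate: each Koszul complex $K(I)[-n]$ is perfect, so by Theorem~\ref{aisles}, $A(\Phi)$ is indeed an aisle of a compactly generated t-structure; and $B(\mathcal{U})(n)$ is a union of basic Thomason sets $V(I)$, hence Thomason, while closure of $\mathcal{U}$ under suspension makes the sequence $B(\mathcal{U})$ decreasing.

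For $B \circ A = \mathrm{id}$, the easy inclusion $\Phi(n) \subseteq B(A(\Phi))(n)$ holds because whenever $V(I) \subseteq \Phi(n)$, the generator $K(I)[-n]$ lies in $A(\Phi)$, and Lemma~\ref{L:generation}(iii) places $R/I[-n]$ in $A(\Phi)$ as well. For the reverse inclusion I would reduce to the bounded below case. For each $m \in \mathbb{Z}$, define the bounded below Thomason filtration $\Phi_m$ by $\Phi_m(k) = \Phi(k)$ for $k \geq m$ and $\Phi_m(k) = \Spec(R)$ otherwise. Since $\Phi_m \supseteq \Phi$ pointwise, $A(\Phi) \subseteq A(\Phi_m)$, and Lemma~\ref{L:boundedbelow} identifies
$$A(\Phi_m) = \{X \in \Der(R) \mid \Supp H^k(X) \subseteq \Phi(k) \text{ for all } k \geq m\}.$$
Intersecting over all $m$ yields $A(\Phi) \subseteq \{X \mid \Supp H^k(X) \subseteq \Phi(k)\,\forall k \in \mathbb{Z}\}$, and applying this to $X = R/I[-n]$ forces $V(I) \subseteq \Phi(n)$.

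For $A \circ B = \mathrm{id}$, the inclusion $\mathcal{U} \subseteq A(B(\mathcal{U}))$ follows from Lemma~\ref{L:replacement}: any compactly generated aisle is generated by a family of Koszul complexes $K(I_\alpha)[s_\alpha]$, each of which forces $R/I_\alpha[s_\alpha] \in \mathcal{U}$ by Lemma~\ref{L:generation}(iii), so $V(I_\alpha) \subseteq B(\mathcal{U})(-s_\alpha)$ and $K(I_\alpha)[s_\alpha]$ is itself a generator of $A(B(\mathcal{U}))$. The reverse inclusion $A(B(\mathcal{U})) \subseteq \mathcal{U}$ is the main obstacle: for each finitely generated ideal $I$ with $V(I) \subseteq B(\mathcal{U})(n)$ one must produce $K(I)[-n] \in \mathcal{U}$ using only the cyclic modules $R/J[-n]$ witnessing membership in $B(\mathcal{U})(n)$. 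Using that $V(I)$ is a quasi-compact open set in the Hochster-dual (Thomason) topology on $\Spec(R)$, I extract a finite subcover $V(I) \subseteq V(J_1) \cup \cdots \cup V(J_k) = V(J)$, where $J = J_1 \cdots J_k$ and $R/J_i[-n] \in \mathcal{U}$. An induction on $k$ using the short exact sequence $0 \to J_1/J \to R/J \to R/J_1 \to 0$, where $J_1/J$ is annihilated by $J_2 \cdots J_k$ and hence belongs to $\aisle(R/(J_2 \cdots J_k))$ by Lemma~\ref{L:generation}(ii), gives $R/J[-n] \in \mathcal{U}$. A final application of Lemma~\ref{L:generation}(ii) to $R/I$, whose support is $V(I) \subseteq V(J)$, places $R/I[-n] \in \aisle(R/J[-n]) \subseteq \mathcal{U}$, and Lemma~\ref{L:generation}(iii) yields $K(I)[-n] \in \mathcal{U}$.
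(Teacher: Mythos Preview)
Your proof is correct and, for the composite $A\circ B=\mathrm{id}$, essentially identical to the paper's: both extract a finite subcover of $V(I)$ by Hochster-dual quasi-compactness, then use Lemma~\ref{L:generation}(ii) to assemble $R/I[-n]$ from the cyclic witnesses (the paper phrases this as a single finite filtration of $R/I_1^{l_1}\cdots I_k^{l_k}$ by modules supported on the $V(I_i)$, while you run an induction on $k$ via the short exact sequence $0\to J_1/J\to R/J\to R/J_1\to 0$; these are the same argument).

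Where you diverge is in the inclusion $BA(\Phi)(n)\subseteq\Phi(n)$. The paper observes in one line that the class $\mathcal{U}_\Phi=\{X\mid\Supp H^j(X)\subseteq\Phi(j)\ \forall j\}$ is closed under extensions, suspensions, and coproducts and contains every generator $K(I)[-n]$, hence $A(\Phi)\subseteq\mathcal{U}_\Phi$; applying this to $X=R/J[-n]$ gives $V(J)\subseteq\Phi(n)$ directly. You instead pass to the bounded-below truncations $\Phi_m$, invoke Lemma~\ref{L:boundedbelow} to identify $A(\Phi_m)=\mathcal{U}_{\Phi_m}$, and intersect over $m$. This is valid and has the appeal of reusing Theorem~\ref{T:bounded}, but it is considerably heavier than the paper's direct closure argument, which needs nothing beyond the elementary fact that supports of cohomology behave well under the aisle operations.
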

	\begin{proof}
		Since both the assignments $A$ and $B$ are clearly well-defined, it is enough to check that they are mutually inverse. 

			First, let $\mathcal{U}$ be a compactly generated aisle. By Lemma~\ref{L:replacement}, we know that $\mathcal{U} = \aisle(\mathcal{S})$, where $\mathcal{S}$ is a set of non-negative suspensions of Koszul complexes. Let us prove that $\mathcal{U} = AB(\mathcal{U})$. Clearly, $\mathcal{U} \subseteq AB(\mathcal{U})$. To prove the converse inclusion, let $J$ be a finitely generated ideal such that $V(J) \subseteq \Phi(n)$, where $\Phi = B(\mathcal{U})$. Since $\Phi(n) = \bigcup \{V(I) \mid I ~\text{ f.g. ideal such that } R/I[-n] \in \mathcal{U}\}$, this means that there are finitely generated ideals $I_1,\ldots,I_k$, such that $I_1^{l_1}\cdots I_k^{l_k} \subseteq J$, for some natural numbers $k,l_1,\ldots,l_k$, and such that $R/I_j[-n] \in \mathcal{U}$ for all $j=1,\ldots,k$. Because the cyclic module $R/I_1^{l_1}\cdots I_k^{l_k}$ admits a finite filtration by modules from hereditary torsion classes $\mathcal{T}_i = \{M \in \ModR \mid \Supp(M) \subseteq V(I_i)\}$, $i=1,\ldots,k$, we infer that $R/I_1^{l_1}\cdots I_k^{l_k} [-n]$ belongs to $\mathcal{U}$ by Lemma~\ref{L:generation}(ii). By the same lemma, $R/J[-n] \in \mathcal{U}$. Therefore, for any finitely generated ideal $J$ with $V(J) \subseteq \Phi(n)$ we have $\aisle(K(J)[-n]) = \aisle(R/J[-n]) \subseteq \mathcal{U}$. This shows that $\mathcal{U} = AB(\mathcal{U})$.

			Now let $\Phi$ be a Thomason filtration, and let us prove that $BA(\Phi) = \Phi$. Again, by Lemma~\ref{L:generation}(iii), clearly $BA(\Phi)(n) \supseteq \Phi(n)$ for each $n \in \mathbb{Z}$. To prove the other inclusion, let $J$ be a finitely generated ideal such that $R/J[-n] \in A(\Phi)$. Because $\mathcal{U}_\Phi = \{X \in \Der(R) \mid \Supp(H^j(X)) \subseteq \Phi(j)\ ~\forall j \in \mathbb{Z}\}$ is a subcategory of $\Der(R)$ closed under extensions, suspensions, and coproducts, and $\Supp(H^j(K(I)) \subseteq V(I)$ for any finitely generated ideal $I$ and any $j \in \mathbb{Z}$, we have $A(\Phi) \subseteq \mathcal{U}_\Phi$. Then $R/J[-n] \in \mathcal{U}_\Phi$, and therefore $V(J) \subseteq \Phi(n)$.
	\end{proof}
\subsection{Compactly generated co-t-structures}
Following \cite[Theorem 4.10]{SP}, we know that there is a 1-1 correspondence in $\Der(R)$ between the compactly generated t-structures and the compactly generated co-t-structures in $\Der(R)$. Explicitly, the correspondence can be described as follows. If $S \in \Der^c(R)$ is an compact object, we define the \EMP{compact-dual} of $S$ to be $S^*:=\RHom_R(S,R) \simeq \Hom_R(S,R)$. Given a t-structure $(\mathcal{U},\mathcal{V})$ generated by a set $\mathcal{S}$ of compact objects, define a torsion pair $(\mathcal{X},\mathcal{Y})$ generated by the set $\mathcal{S}^* = \{S^* \mid S \in \mathcal{S}\}$ of compact-duals of $\mathcal{S}$. By \cite{SP}, the pair $(\mathcal{X},\mathcal{Y})$ is a co-t-structure, and this assignment is the desired correspondence.

Starting with a Thomason filtration $\Phi$, we define a set 
$$\mathcal{S}_\Phi = \{K(I)[-n] \mid ~\forall V(I) \subseteq \Phi(n) ~\forall n \in \mathbb{Z}\}$$
of compact objects. We can express the compactly generated coaisle $\mathcal{V}_\Phi$ associated to $\Phi$ via Theorem~\ref{T:full} as follows, using \cite[15.68.0.2]{Stacks}:
$$\mathcal{V}_\Phi = \mathcal{S}_\Phi\Perp{0} = \{ X \in \Der(R) \mid \Hom_{\Der(R)}(K(I)[-i],X)=0 ~\forall i \in \mathbb{Z} ~\forall V(I) \subseteq \Phi(i)\} = $$
$$= \{ X \in \Der(R) \mid \Hom_{\Der(R)}(K(I),X[k])=0 ~\forall k \leq i \in \mathbb{Z} ~\forall V(I) \subseteq \Phi(i)\} =$$ 
$$= \{ X \in \Der(R) \mid \RHom_R(K(I),X) \in \Der{}^{>i} ~\forall i \in \mathbb{Z} ~\forall V(I) \subseteq \Phi(i)\}.$$
If $(\mathcal{X},\mathcal{Y}_\Phi)$ is the compactly generated co-t-structure corresponding to the t-structure $(\mathcal{U},\mathcal{V}_\Phi)$ in the above-described manner, we claim that
\begin{equation}\label{E:cot}
	\mathcal{Y}_\Phi = \{ X \in \Der(R) \mid K(I) \otimes_R^{\mathbf{L}} X \in \Der{}^{<-i} ~\forall i \in \mathbb{Z} ~\forall V(I) \subseteq \Phi(i)\}.
\end{equation}
To establish (\ref{E:cot}), we proceed as follows. First, for any $X \in \Der(R)$ we have:
$$X \in \mathcal{Y}_\Phi = (\mathcal{S}_\Phi^*)\Perp{0} \iff \Hom_{\Der(R)}((K(I)[-i])^*,X) = 0 ~\forall i \in \mathbb{Z} ~\forall V(I) \subseteq \Phi(i).$$
Furthermore,
$$(K(I)[-i])^* \simeq K(I)^*[i],$$
whence, by similar computation as above,
$$\mathcal{Y}_\Phi = \{ X \in \Der(R) \mid \RHom_R(K(I)^*, X) \in \Der{}^{<-i} ~\forall i \in \mathbb{Z} ~\forall V(I) \subseteq \Phi(i)\}.$$
By \cite[Lemma 20.43.11]{Stacks}, we can use the compactness of $K(I)$ to infer that
$$\RHom_R(K(I)^*, X) \simeq K(I)^{**} \otimes_R^\mathbf{L} X \simeq K(I) \otimes_R^\mathbf{L} X.$$
This proves (\ref{E:cot}). Altogether, this yields the following classification result for compactly generated co-t-structures:
\begin{thm}\label{T:dualmain}
	Let $R$ be a commutative ring. Then there is a 1-1 correspondence:
		$$\left \{ \begin{tabular}{ccc} \text{ Thomason filtrations $\Phi$} \\ \text{of $\Spec(R)$} \end{tabular}\right \}  \leftrightarrow  \left \{ \begin{tabular}{ccc} \text{Compactly generated } \\ \text{co-t-structures in $\Der(R)$} \end{tabular}\right \}.$$
				The correspondence is given by the assignment 
				$$\Phi \mapsto (\Perp{0}\mathcal{Y}_\Phi, \mathcal{Y}_\Phi).$$ 
\end{thm}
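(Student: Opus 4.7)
The plan is to obtain Theorem~\ref{T:dualmain} by composing the bijection of Theorem~\ref{T:full} with the duality bijection between compactly generated t-structures and compactly generated co-t-structures. The latter bijection is the content of \cite[Theorem 4.10]{SP}, which was recalled immediately before the theorem: a compactly generated t-structure generated by a set $\mathcal{S} \subseteq \Der^c(R)$ corresponds to the compactly generated co-t-structure generated by the set $\mathcal{S}^\ast = \{S^\ast \mid S \in \mathcal{S}\}$ of compact-duals, and conversely every compactly generated co-t-structure arises in this way.

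First, I would note that Theorem~\ref{T:full} provides a bijection between Thomason filtrations $\Phi$ of $\Spec(R)$ and aisles of compactly generated t-structures, realized concretely by $\Phi \mapsto \aisle(\mathcal{S}_\Phi)$ with
$$\mathcal{S}_\Phi = \{K(I)[-n] \mid \forall V(I) \subseteq \Phi(n),\ \forall n \in \mathbb{Z}\}.$$
Composing this with $\mathcal{S} \mapsto \mathcal{S}^\ast$ from \cite[Theorem 4.10]{SP} gives a well-defined map $\Phi \mapsto (\Perp{0}\mathcal{Y}_\Phi,\mathcal{Y}_\Phi)$, where $\mathcal{Y}_\Phi$ is the compactly generated coaisle cogenerated by $\mathcal{S}_\Phi^\ast$. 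By construction this map is bijective, since both constituent maps are bijections.

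It remains only to check that the coaisle $\mathcal{Y}_\Phi$ admits the explicit description derived in the excerpt, namely equation (\ref{E:cot}). This identification is already done in the paragraphs directly above the theorem statement: by unwrapping the definition of the compact-dual, by using that $(K(I)[-i])^\ast \simeq K(I)^\ast[i]$, and by invoking \cite[Lemma 20.43.11]{Stacks} to rewrite $\RHom_R(K(I)^\ast,X) \simeq K(I) \otimes_R^\mathbf{L} X$ (which is legitimate because $K(I)$ is perfect, so $K(I)^{\ast\ast} \simeq K(I)$). This converts the $\RHom$-vanishing condition defining $\mathcal{Y}_\Phi$ into the tensor condition displayed in (\ref{E:cot}).

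I expect no real obstacle here: the proof is essentially a formal consequence of Theorem~\ref{T:full} together with the abstract compact-duality of \cite{SP}, and the only nontrivial content — the concrete form of $\mathcal{Y}_\Phi$ — has already been carried out in the paragraphs preceding the theorem. Accordingly, the proof would be one short paragraph pointing at Theorem~\ref{T:full}, citing \cite[Theorem 4.10]{SP}, and referring to equation (\ref{E:cot}) for the explicit description of the coaisle.
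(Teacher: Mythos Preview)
Your proposal is correct and follows essentially the same route as the paper: the theorem is obtained by composing the bijection of Theorem~\ref{T:full} with the compact-duality bijection of \cite[Theorem 4.10]{SP}, and the explicit description of $\mathcal{Y}_\Phi$ is exactly the computation carried out in the paragraphs preceding the statement (culminating in (\ref{E:cot})). The paper in fact presents no separate proof environment for this theorem; the argument is the preceding discussion, which you have accurately summarized.
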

\subsection{Compactly generated t-structures versus localizing pairs}\label{S:localizations}
	The purpose of this subsection is twofold --- to describe the coaisles of the compactly generated t-structures explicitly, and to make precise the relation between the classification of compactly generated localizing subcategories (via Thomason sets), and the present classification of compactly generated t-structures (via $\mathbb{Z}$-filtrations by Thomason sets).

	A triangulated subcategory $\mathcal{L}$ of $\Der(R)$ is called \EMP{localizing} if it is closed under coproducts. If $(\mathcal{L},\mathcal{L}\Perp{0})$ forms a localizing pair, that is, if the inclusion $\mathcal{L} \subseteq \Der(R)$ admits a right adjoint, we call $\mathcal{L}$ a \EMP{strict localizing} subcategory. If in this situation also the class $\mathcal{L}\Perp{0}$ is localizing, we call $(\mathcal{L},\mathcal{L}\Perp{0})$ a \EMP{smashing} localizing pair, and the class $\mathcal{L}$ a \EMP{smashing} subcategory. A localizing pair is smashing if and only if it is homotopically smashing, this follows from \cite[\S 5.2]{MU} (and it is not true for t-structures in general, see \cite[Example 6.2]{SSV}).

	The term ``smashing'' comes from algebraic topology and indicates that the smashing subcategory is determined by the symmetric monoidal product, which in our category $\Der(R)$ means the derived tensor product. Indeed, that is the case. Let $\mathcal{L}$ be a smashing subcategory and consider the left (resp. right) approximation functor $\Gamma$ (resp. $L$) with respect to the localizing pair $(\mathcal{L},\mathcal{L}\Perp{0})$. Then the approximation triangle
	$$\Gamma(R) \rightarrow R \rightarrow L(R) \rightarrow \Gamma(R)[1],$$
	is an idempotent triangle in the sense of \cite{BF}, that is, $\Gamma(R) \otimes_R^\mathbf{L} L(R) = 0$, and by \cite[Theorem 2.13]{BF} we have:
	$$L \simeq (L(R) \otimes_R^\mathbf{L} - ), ~ ~ ~ \Gamma \simeq (\Gamma(R) \otimes_R^\mathbf{L} -).$$
	In particular, $\mathcal{L} = \Ker(L(R) \otimes_R^\mathbf{L} -)$, and $\mathcal{L}\Perp{0} = \Ker(\Gamma(R) \otimes_R^\mathbf{L} -)$. 	
	
	It is clear that any compactly generated localizing pair is smashing. The question whether the converse is also true is known as the telescope conjecture, and it is not valid in general, but it is true for example for commutative noetherian rings \cite{N}, or for (even one-sided) hereditary rings \cite{KS}.

	The classification of compactly generated localizing pairs is due to Thomason \cite{T}, who generalized the result of Neeman and Hopkins from noetherian rings to arbitrary commutative rings. It also follows as a special case of Theorem~\ref{T:full}, as compactly generated localizing pairs amongst compactly generated t-structures are clearly precisely those, such that the corresponding Thomason filtration is constant. We recall that given a set $\mathcal{S} \subseteq \Der(R)$, $\Loc(\mathcal{S})$ denotes the smallest localizing subcategory in $\Der(R)$ containing $\mathcal{S}$. Equivalently, $\Loc(\mathcal{S}) = \aisle(\mathcal{S}[n] \mid n<0)$, and therefore $\Loc(\mathcal{S})$ is always a strict localizing subcategory.
	\begin{thm}\label{T:thomason}\emph{(\cite{T}, \cite{KP}}
			Let $R$ be a commutative ring. There is a 1-1 correspondence 
				$$\left \{ \begin{tabular}{ccc} \text{ Thomason subsets $P$} \\ \text{of $\Spec(R)$} \end{tabular}\right \}  \leftrightarrow  \left \{ \begin{tabular}{ccc} \text{Compactly generated} \\ \text{localizing subcategories $\mathcal{L}$ of $\Der(R)$} \end{tabular}\right \},$$
				given by the assignment
				$$X \mapsto \mathcal{L}_X = \Loc(K(I) \mid V(I) \subseteq P).$$
	\end{thm}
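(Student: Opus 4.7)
The plan is to derive this statement as a direct specialization of Theorem~\ref{T:full}. The key bridge is that compactly generated localizing subcategories of $\Der(R)$ are exactly the aisles of compactly generated t-structures that are additionally closed under cosuspensions. Indeed, if $\mathcal{L} = \Loc(\mathcal{S})$ for a set $\mathcal{S} \subseteq \Der^c(R)$, then the identity $\Loc(\mathcal{S}) = \aisle(\mathcal{S}[n] \mid n \in \mathbb{Z})$ recalled in the subsection exhibits $\mathcal{L}$ as a compactly generated aisle, and closure under $[-1]$ is automatic. Conversely, any compactly generated aisle closed under cosuspensions is a triangulated subcategory closed under coproducts, hence a localizing subcategory.

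The next step is to translate the closure condition $\mathcal{L}[-1] \subseteq \mathcal{L}$ to the level of Thomason filtrations via Theorem~\ref{T:full}. Under the assignment $B: \mathcal{U} \mapsto \Phi_\mathcal{U}$, we have $\Phi_\mathcal{U}(n) = \bigcup\{V(I) \mid R/I[-n] \in \mathcal{U}\}$; when $\mathcal{U} = \mathcal{L}$ is closed under arbitrary shifts, the membership $R/I[-n] \in \mathcal{L}$ is independent of $n$, so $\Phi_\mathcal{L}$ is constant with value $P := \bigcup\{V(I) \mid R/I \in \mathcal{L}\}$, a Thomason set. Conversely, for a constant filtration $\Phi \equiv P$ the corresponding aisle $A(\Phi) = \aisle(K(I)[-n] \mid V(I) \subseteq P,\ n \in \mathbb{Z})$ is generated by a class closed under all integer shifts, hence is a localizing subcategory. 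Thus, restricting the bijection of Theorem~\ref{T:full} to constant filtrations yields the desired bijection between Thomason subsets and compactly generated localizing subcategories.

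Finally, I would verify that under this identification the explicit formula from Theorem~\ref{T:full} reads
$$\mathcal{L} = \aisle(K(I)[-n] \mid V(I) \subseteq P,\ n \in \mathbb{Z}) = \Loc(K(I) \mid V(I) \subseteq P),$$
where the second equality uses that an aisle whose generators are closed under all integer shifts coincides with the localizing subcategory they generate. No substantial obstacle is expected; the entire argument is a routine bookkeeping of the translation between aisles closed under $[-1]$ and constant filtrations, on top of the already-established Theorem~\ref{T:full}.
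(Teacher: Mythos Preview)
Your proposal is correct and matches the paper's own treatment: the paper states Theorem~\ref{T:thomason} as a known result from the literature (\cite{T}, \cite{KP}) and explicitly remarks that it ``also follows as a special case of Theorem~\ref{T:full}, as compactly generated localizing pairs amongst compactly generated t-structures are clearly precisely those, such that the corresponding Thomason filtration is constant.'' Your argument spells out exactly this reduction, and there is no circularity since the proof of Theorem~\ref{T:full} does not invoke Theorem~\ref{T:thomason}.
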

	If the Thomason set $P$ is equal to $V(I)$ for some finitely generated ideal $I$, then the induced idempotent triangle is given by a \v{C}ech complex, as we explain now. Given an element $x \in R$, we let 
	$$\check{C}^{\sim}(x) = (\cdots \rightarrow 0 \rightarrow R \xrightarrow{\iota} R[x^{-1}] \rightarrow 0 \rightarrow \cdots),$$
	a complex concentrated in degrees 0 and 1, where $\iota$ is the obvious natural map. For an $n$-tuple $\bar{x}=(x_1,x_2,\ldots,x_n)$ of elements of $R$ we let $\check{C}^{\sim}(\bar{x}) = \bigotimes_{i=1}^n \check{C}^{\sim}(x_i)$. It follows from \cite[Corollary 3.12]{Gre} that if $\bar{x}$ and $\bar{y}$ are two finite sequences of generators of an ideal $I$, then $\check{C}^{\sim}(\bar{x})$ is quasi-isomorphic to $\check{C}^{\sim}(\bar{y})$. Therefore, given a finitely generated ideal $I$, we can use any finite generating sequence of $I$ to define $\check{C}^{\sim}(I)$ as an object in $\Der(R)$, and call it the \EMP{infinite Koszul complex}\footnote{Also known as the ``stable Koszul complex'' in the literature.} associated to $I$. Let $I$ be generated by a finite sequence $\bar{x}$. Note that, up to quasi-isomorphism, $\check{C}^{\sim}(I)$ is a complex of form
	$$R \rightarrow \bigoplus_{1 \leq i_1 \leq n} R[x_{i_1}^{-1}] \rightarrow \bigoplus_{1 \leq i_1 < i_2 \leq n} R[x_{i_1}^{-1},x_{i_2}^{-1}] \rightarrow \cdots \rightarrow R[x_1^{-1},x_2^{-1},\ldots,x_n^{-1}].$$ 
	Taking the cone of the map $\check{C}^{\sim}(I) \rightarrow R$, given by identity in degree 0, results in a triangle
	\begin{equation}\label{E:Cech}
		\Delta_I: \check{C}^{\sim}(I) \rightarrow R \rightarrow \check{C}(I) \rightarrow \check{C}^{\sim}(I)[1].
	\end{equation}
	We call the complex $\check{C}(I)$ the \EMP{\v{C}ech complex} associated to the finitely generated ideal $I$. It follows e.g. from \cite[Lemma 1.1]{Pos} that $\check{C}^{\sim}(I) \in \mathcal{L}_{V(I)}$. Because $\mathcal{L}\Perp{0}$ is closed under extensions, tensoring by arbitrary complexes (\cite[Lemma 1.1.8]{KP}), and contains all stalk complexes $R[x_1^{-1}],\ldots,R[x_n^{-1}]$ (\cite[Theorem 2.2.4]{KP}), also $\check{C}(I) \in \mathcal{L}_{V(I)}\Perp{0}$. Then the triangle (\ref{E:Cech}) is the approximation triangle of $R$ with respect to the localizing pair $(\mathcal{L}_{V(I)},\mathcal{L}_{V(I)}\Perp{0})$. In particular, 
	$$\mathcal{L}_{V(I)} = \Ker(\check{C}(I) \otimes_R -) \text{   and   } \mathcal{L}_{V(I)}\Perp{0} = \Ker(\check{C}^{\sim}(I) \otimes_R -)$$ 
	(note that, as both $\check{C}(I)$ and $\check{C}^{\sim}(I)$ are bounded complexes of flat $R$-modules, we can drop the left derivation symbol from the tensor product).

	Finally, we explain how the stable Koszul complex is built from the (compact) Koszul complexes. One can see directly from the construction that for a single element $x \in R$, the stable Koszul complex $\check{C}^{\sim}(x)$ is obtained as a direct limit of compact-duals of Koszul complexes $K(x^n), n>0$: 
	$$\check{C}^{\sim}(x) \simeq \varinjlim_{n>0} K(x^n)^*.$$ 
	If $x_1,x_2,\ldots,x_m$ is a sequence of generators of an ideal $I$, it is then easy to that 
	$$
			\check{C}^{\sim}(I) \simeq \otimes_{i=1}^m \check{C}^{\sim}(x_i) \simeq \otimes_{i=1}^m \varinjlim_{n>0} K(x_i^n)^* \simeq \varinjlim_{n>0} \otimes_{i=1}^m K(x_i^n)^* \simeq \varinjlim_{n>0} K(x_1^n,\ldots,x_m^n)^*.
$$
\begin{lem}\label{L:adjoint}
		Let $I$ be a finitely generated ideal of a commutative ring $R$. Then $\aisle(K(I)) = \mathcal{L}_{V(I)} \cap \Der^{\leq 0} = \{X \in \Der^{\leq 0} \mid \Supp(X) \subseteq V(I) ~\forall n \leq 0\}$. Furthermore, the functor defined as the composition $\tau^{\leq 0} \circ (\check{C}^{\sim}(I) \otimes_R -)$ is the right adjoint to the inclusion of $\aisle(K(I))$ into $\Der(R)$. In particular, $\aisle(K(I))\Perp{0} = \{Y \in \Der(R) \mid \check{C}^{\sim}(I) \otimes_R Y \in \Der^{>0}\}$.

\end{lem}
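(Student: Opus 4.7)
The plan is to establish the three-way equality in stages, then construct the right adjoint explicitly from the idempotent triangle $\Delta_I$ of \eqref{E:Cech}. The second equality $\mathcal{L}_{V(I)} \cap \Der^{\leq 0} = \{X \in \Der^{\leq 0} \mid \Supp H^n(X) \subseteq V(I)\ \forall n \leq 0\}$ follows from the support characterization of $\mathcal{L}_{V(I)}$ recalled in the preceding discussion (\cite{N}, \cite{DG}, \cite{Pos}). The inclusion $\aisle(K(I)) \subseteq \mathcal{L}_{V(I)} \cap \Der^{\leq 0}$ is immediate: $K(I)$ lies in both classes and the right-hand side is closed under suspensions, extensions, and coproducts.

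For the converse, I take $X \in \mathcal{L}_{V(I)} \cap \Der^{\leq 0}$ and first replace it, up to quasi-isomorphism, by a complex $Y$ with $Y^n = 0$ for $n > 0$ and $Y^n \in \mathcal{T}_{V(I)}$ (i.e. supported on $V(I)$) for all $n \leq 0$. This uses $X \simeq \check{C}^{\sim}(I) \otimes_R X$, which follows from $\Delta_I$ together with $\check{C}(I) \otimes_R X = 0$; concretely, $Y$ is the degree-$0$ good truncation of the complex obtained by applying the $I$-torsion subfunctor to the coordinates of a K-injective resolution of $X$. With $Y$ in hand, I write $Y \simeq \Mcolim_n \sigma^{>-n} Y$ using \eqref{E:brutal}. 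Each $\sigma^{>-n} Y$ is a finite iterated extension of stalk complexes $Y^{-i}[i]$ for $0 \leq i < n$, each being a non-negative suspension of a module in $\mathcal{T}_{V(I)}$ and so lying in $\aisle(R/I) = \aisle(K(I))$ by Lemma~\ref{L:generation}(ii),(iii). Since aisles are closed under Milnor colimits (Proposition~\ref{P:properties}(i)), we obtain $Y \in \aisle(K(I))$, and therefore $X \in \aisle(K(I))$.

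For the right adjoint, given $X \in \Der(R)$ I set $U := \tau^{\leq 0}(\check{C}^{\sim}(I) \otimes_R X)$. The complex $\check{C}^{\sim}(I) \otimes_R X$ lies in $\mathcal{L}_{V(I)} = \Ker(\check{C}(I) \otimes_R -)$ by the identity $\check{C}(I) \otimes \check{C}^{\sim}(I) = 0$, and $\mathcal{L}_{V(I)}$ is preserved by the standard truncation, so $U \in \mathcal{L}_{V(I)} \cap \Der^{\leq 0} = \aisle(K(I))$ by the first part. The natural map $U \to X$ is the composition $U \hookrightarrow \check{C}^{\sim}(I) \otimes_R X \to X$, and the octahedral axiom applied to this composition exhibits the cone $V$ of $U \to X$ in a triangle
\[
\tau^{>0}(\check{C}^{\sim}(I) \otimes_R X) \to V \to \check{C}(I) \otimes_R X \to \tau^{>0}(\check{C}^{\sim}(I) \otimes_R X)[1].
\]
Both outer terms lie in $\aisle(K(I))\Perp{0}$: the first belongs to $\Der^{>0} \subseteq \aisle(K(I))\Perp{0}$ since $\aisle(K(I)) \subseteq \Der^{\leq 0}$, and the second to $\mathcal{L}_{V(I)}\Perp{0} \subseteq \aisle(K(I))\Perp{0}$ since $\aisle(K(I)) \subseteq \mathcal{L}_{V(I)}$. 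Thus $V \in \aisle(K(I))\Perp{0}$, identifying $U \to X \to V$ as the approximation triangle of $X$ in the t-structure $(\aisle(K(I)), \aisle(K(I))\Perp{0})$, and exhibiting $\tau^{\leq 0}(\check{C}^{\sim}(I) \otimes_R -)$ as the right adjoint to the inclusion. The final clause follows: $Y$ lies in the coaisle iff its right-adjoint image vanishes, iff $\tau^{\leq 0}(\check{C}^{\sim}(I) \otimes_R Y) = 0$, iff $\check{C}^{\sim}(I) \otimes_R Y \in \Der^{>0}$.

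The principal difficulty is the torsion replacement $X \rightsquigarrow Y$ in the unbounded case: brutal truncations of an arbitrary K-flat resolution do not preserve the cohomological support condition, so the coordinates of the representing complex must genuinely lie in $\mathcal{T}_{V(I)}$ before the brutal truncation and Milnor colimit machinery of Section 2 can be applied. This rests on the interaction between the hereditary torsion pair of finite type $(\mathcal{T}_{V(I)}, \mathcal{F}_{V(I)})$ and the derived category, organized through the local cohomology functor $\check{C}^{\sim}(I) \otimes_R -$ computed on a K-injective resolution.
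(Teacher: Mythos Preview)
Your octahedral argument for the right adjoint is correct and a bit more explicit than the paper's, which simply observes that the composition of the two right adjoints $\check{C}^{\sim}(I)\otimes_R-$ and $\tau^{\leq 0}$ has essential image $\mathcal{L}_{V(I)}\cap\Der^{\leq 0}$ and hence is right adjoint to that inclusion. The second displayed equality and the easy inclusion $\aisle(K(I))\subseteq\mathcal{L}_{V(I)}\cap\Der^{\leq 0}$ are fine as well.

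The gap is in the reverse inclusion, specifically the torsion replacement $X\rightsquigarrow Y$. You propose to take a K-injective resolution $E$ of $X$ and set $Y=\tau^{\leq 0}$ of the complex obtained by applying the $I$-torsion subfunctor to the coordinates of $E$. But that complex computes the right derived functor of $I$-power torsion, and for it to be quasi-isomorphic to $X$ one needs this derived functor to agree with $\check{C}^{\sim}(I)\otimes_R-$ on $\mathcal{L}_{V(I)}$. That is precisely the weak proregularity condition on $I$: automatic in the noetherian case, but false for general commutative rings (see e.g.\ \cite{Pos} and references there). Your final paragraph conflates the two functors---$\check{C}^{\sim}(I)\otimes_R-$ is computed on a K-flat replacement and needs no injective resolution, while the coordinatewise $I$-torsion of a K-injective computes a genuinely different object. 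Over a general commutative ring the construction of $Y$ is therefore unjustified, and it is not clear that a bounded-above representative with all coordinates in $\mathcal{T}_{V(I)}$ exists at all.

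The paper sidesteps this completely. Rather than building a torsion representative, it takes the approximation triangle $U\to Y\to X\to U[1]$ of $Y\in\mathcal{L}_{V(I)}\cap\Der^{\leq 0}$ with respect to $(\aisle(K(I)),\aisle(K(I))\Perp{0})$, observes that the cone $X$ again lies in $\mathcal{L}_{V(I)}\cap\Der^{\leq 0}$, and then uses $\check{C}^{\sim}(I)\simeq\varinjlim_{n}K_n^*$ (with $K_n=K(x_1^n,\ldots,x_m^n)$) together with compact duality to write $X\simeq\dhocolim_{n}\RHom_R(K_n,X)$. Since $\aisle(K_n)=\aisle(K(I))$ by Lemma~\ref{L:generation} and $X$ lies in the coaisle, each $\RHom_R(K_n,X)$ is in $\Der^{>0}$, forcing $X\in\Der^{>0}\cap\Der^{\leq 0}=0$ and hence $Y\simeq U\in\aisle(K(I))$. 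This argument is valid over an arbitrary commutative ring.
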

\begin{proof}
		First, recall that both $\tau^{\leq 0}$ and $\check{C}^{\sim}(I)$ are the right adjoints to the inclusions of $\Der^{\leq 0}$ and $\mathcal{L}_{V(I)}$ into $\Der(R)$, respectively. Moreover, we have by \cite[Proposition 5.1]{Pos} that $\mathcal{L}_{V(I)} = \{X \in \Der(R) \mid \Supp(H^n(X)) \subseteq V(I) ~\forall n \in \mathbb{Z}\}$. Because the essential image of the composition $\tau^{\leq 0} \circ (\check{C}^{\sim}(I) \otimes_R -)$ is precisely $\mathcal{L}_{V(I)} \cap \Der^{\leq 0}$, we infer that $\tau^{\leq 0} \circ (\check{C}^{\sim}(I) \otimes_R -)$ is the right adjoint to the inclusion of $\mathcal{L}_{V(I)} \cap \Der^{\leq 0} = \{X \in \Der^{\leq 0} \mid \Supp(H^n(X)) \subseteq V(I) ~\forall n \leq 0\}$ to $\Der(R)$.

		Next we show that $\mathcal{L}_{V(I)} \cap \Der^{\leq 0} = \aisle(K(I))$. Since $\Supp(H^n(K(I))) = V(I)$ for all $n \in \mathbb{Z}$, we clearly have $\aisle(K(I)) \subseteq \mathcal{L}_{V(I)} \cap \Der^{\leq 0}$. In order to show the other inclusion, let $Y$ be a complex from $\mathcal{L}_{V(I)} \cap \Der^{\leq 0}$, and consider the approximation triangle with respect to the t-structure $(\aisle(K(I)),\aisle(K(I))\Perp{0})$:
		$$U \rightarrow Y \rightarrow X \rightarrow X[1].$$ 
		Since both $U$ and $Y$ are in $\mathcal{L}_{V(I)} \cap \Der^{\leq 0}$, so is the cone $X$. We will show that $X=0$, and therefore $Y \simeq U \in \aisle(K(I))$. Let $x_1,\ldots,x_m$ be a finite sequence of generators of $I$, and let $K_n = K(x_1^n,\ldots,x_m^n)$ denote the Koszul complex of $n$-th powers of the generators for each $n>0$. Using Lemma~\ref{L:generation}, we see that $\aisle(K(I)) = \aisle(K_n)$ for all $n > 0$, and therefore $K_n \in \aisle(K(I))$ for all $n>0$. Then we can compute using the discussion above and \cite[Lemma 20.43.11]{Stacks}:
		$$X \simeq X \otimes_R \check{C}^{\sim}(I) \simeq X \otimes_R \varinjlim_{n>0} K_n^* \simeq \varinjlim_{n>0} (X \otimes_R K_n^*) \simeq \dhocolim_{n>0} \mathbf{R}\Hom_R(K_n,X).$$
		Because $X \in \aisle(K(I))\Perp{0}$, we see that $\mathbf{R}\Hom_R(K_n,X) \in \Der^{>0}$ for all $n>0$, and therefore also $X \simeq \dhocolim_{n>0} \mathbf{R}\Hom_R(K_n,X) \in \Der^{>0}$. Since $X \in \Der^{\leq 0}$, we conclude that $X=0$. We proved that $\mathcal{L}_{V(I)} \cap \Der^{\leq 0} = \aisle(K(I))$.

		Since the left approximation functor associated to $\aisle(K(I))$ is $\tau^{\leq 0} \circ (\check{C}^{\sim}(I) \otimes_R -)$, it follows that the coaisle $\aisle(K(I))\Perp{0}$ is equal to $\{Y \in \Der(R) \mid \tau^{\leq 0} (\check{C}^{\sim}(I) \otimes_R Y) = 0\} = \{Y \in \Der(R) \mid \check{C}^{\sim}(I) \otimes_R Y \in \Der^{>0}\}$.
\end{proof}

\begin{prop}\label{P:Cech}
		Let $R$ be a commutative ring and $\Phi$ a Thomason filtration. Consider the compactly generated t-structure associated to $\Phi$ via Theorem~\ref{T:full}, that is, a t-structure $(\mathcal{U},\mathcal{V}_\Phi)$ generated by the set $\mathcal{S}_\Phi = \{K(I)[-n] \mid ~\forall V(I) \subseteq \Phi(n) ~\forall n \in \mathbb{Z}\}$. Then the coaisle $\mathcal{V}_\Phi$ can be described as follows:
			$$\mathcal{V}_\Phi = \{Y \in \Der(R) \mid (\check{C}^{\sim}(I) \otimes_R Y) \in \Der{}^{>n} ~\forall V(I) \subseteq \Phi(n) ~\forall n \in \mathbb{Z}\}.$$
	\begin{proof}
			Clearly, we have
						\begin{equation}\label{E:intersection}
					\mathcal{V}_\Phi = \mathcal{S}_\Phi\Perp{0}= \bigcap_{n \in \mathbb{Z}, V(I) \subseteq \Phi(n)} \aisle(K(I)[-n])\Perp{0}.
			\end{equation}
			Using Lemma~\ref{L:adjoint} and shifting, we have 
			$$ \aisle(K(I)[-n])\Perp{0}= \{Y \in \Der(R) \mid (\check{C}^{\sim}(I) \otimes_R Y) \in \Der{}^{>n}\}.$$ Combining this with (\ref{E:intersection}) yields the desired description of $\mathcal{V}_\Phi$.
	\end{proof}
\end{prop}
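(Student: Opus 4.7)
The plan is to reduce the description of $\mathcal{V}_\Phi$ to the already-established description of the coaisle of a single aisle generated by one Koszul complex, namely Lemma~\ref{L:adjoint}, and then to glue these descriptions together via an intersection.

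First, I would unpack the definition of the generating set. By construction $\mathcal{V}_\Phi = \mathcal{S}_\Phi{}\Perp{0}$, and since the right orthogonal of a union is the intersection of the right orthogonals, I get
$$\mathcal{V}_\Phi = \bigcap_{n \in \mathbb{Z},\; V(I) \subseteq \Phi(n)} \{K(I)[-n]\}\Perp{0}.$$
Now $\{K(I)[-n]\}\Perp{0}$ is in general strictly larger than $\aisle(K(I)[-n])\Perp{0}$, but here I can upgrade the single object to the whole aisle it generates. Indeed, for any set $\mathcal{C}$, the subcategory $\mathcal{C}\Perp{0}$ is closed under cosuspensions precisely when $\mathcal{C}$ is closed under suspensions, in which case $\mathcal{C}\Perp{0} = (\aisle(\mathcal{C}))\Perp{0}$, because the aisle is the smallest subcategory containing $\mathcal{C}$ closed under suspensions, extensions, and coproducts, all of which preserve the right $\Perp{0}$-orthogonal. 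Since $\mathcal{S}_\Phi$ is already closed under suspensions by the decreasing condition of a Thomason filtration (if $V(I) \subseteq \Phi(n)$ and $m \leq n$, then $V(I) \subseteq \Phi(m)$), this rewriting is legitimate, and I obtain
$$\mathcal{V}_\Phi = \bigcap_{n \in \mathbb{Z},\; V(I) \subseteq \Phi(n)} \aisle(K(I)[-n])\Perp{0}.$$

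Next, I apply Lemma~\ref{L:adjoint}, which gives $\aisle(K(I))\Perp{0} = \{Y \mid \check{C}^{\sim}(I) \otimes_R Y \in \Der^{>0}\}$. Shifting by $[-n]$ on the left yields
$$\aisle(K(I)[-n])\Perp{0} = \{Y \in \Der(R) \mid \check{C}^{\sim}(I) \otimes_R Y \in \Der{}^{>n}\},$$
which follows because shifting the aisle by $[-n]$ shifts its coaisle in the same direction, and the functor $\check{C}^{\sim}(I) \otimes_R -$ commutes with suspensions. Intersecting over all pairs $(n, I)$ with $V(I) \subseteq \Phi(n)$ gives the stated formula for $\mathcal{V}_\Phi$.

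There is no real obstacle here, as all the substantial work is already concentrated in Lemma~\ref{L:adjoint}. The only subtle point is the passage from $\{K(I)[-n]\}\Perp{0}$ to $\aisle(K(I)[-n])\Perp{0}$, which relies on $\mathcal{S}_\Phi$ being closed under non-negative suspensions, and this is built into the definition of a Thomason filtration. Once that is observed, the proof is a direct assembly of already proved results.
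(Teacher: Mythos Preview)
Your proof is correct and follows essentially the same approach as the paper: express $\mathcal{V}_\Phi$ as the intersection of the coaisles $\aisle(K(I)[-n])\Perp{0}$ and then apply Lemma~\ref{L:adjoint} with a shift. You actually justify the intersection formula more carefully than the paper (which simply asserts it as ``clear''), correctly identifying that the closure of $\mathcal{S}_\Phi$ under non-negative suspensions---a consequence of the filtration being decreasing---is what makes the passage from single-object orthogonals to aisle orthogonals harmless in the intersection.
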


\section{Intermediate t-structures and cosilting complexes}\label{S:cosilting}
	We finish the paper by discussing the consequences of Theorem~\ref{T:TC} and Theorem~\ref{T:bounded} for the silting theory of a commutative noetherian ring. An object $C \in \Der(R)$ is a \EMP{(bounded) cosilting complex} if the two following conditions hold: 
	\begin{enumerate}
			\item $C$ belongs to $\Htpy^b(\operatorname{Inj-R})$, that is, $C$ is quasi-isomorphic to a bounded complex of injective $R$-modules, and,
			\item the pair $(\Perp{\leq 0}C, \Perp{>0}C)$ forms a t-structure (note that this implies, in particular, that $C \in \Perp{>0}C$)\footnote{The author is grateful to Rosanna Laking for pointing out the unnecessity of this condition in the definition of a cosilting object to him.}.
	\end{enumerate}
	The adjective ``bounded'' in this context reads as condition (1), and variants of cosilting object not satisfying condition (1) are also discussed in literature, especially in the setting of a general compactly generated triangulated category (we refer to \cite{NSZ}, \cite{PV}). In this paper, all cosilting complexes are bounded. The notion of a cosilting complex was introduced in \cite{ZW} as a dualization of the (big) silting complexes of \cite{AMV0}. Combining the recent works \cite{MV} and \cite{L} gives a useful characterization of t-structures induced by cosilting complexes. Before that, we need to recall a couple of definitions. Say that a t-structure $(\mathcal{U},\mathcal{V})$ is \EMP{intermediate} if there are integers $n \leq m$ such that $\Der^{\geq m} \subseteq \mathcal{V} \subseteq \Der^{\geq n}$. A \EMP{TTF triple} is a triple $(\mathcal{U},\mathcal{V},\mathcal{W})$ of subcategories of $\Der(R)$ such that both $(\mathcal{U},\mathcal{V})$ and $(\mathcal{V},\mathcal{W})$ are torsion pairs in $\Der(R)$.
	\begin{thm}\label{T:cosilting}
			Let $R$ be a (not necessarily commutative) ring, and $(\mathcal{U},\mathcal{V})$ an intermediate t-structure in $\Der(R)$. Then the following conditions are equivalent:
			\begin{enumerate}
					\item[(i)] $(\mathcal{U},\mathcal{V})$ is induced by a (bounded) cosilting complex,
					\item[(ii)] there is a (cosuspended) TTF triple $(\mathcal{U},\mathcal{V},\mathcal{W})$,
					\item[(iii)] $(\mathcal{U},\mathcal{V})$ is homotopically smashing.
			\end{enumerate}
	\end{thm}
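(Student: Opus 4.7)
The plan is to establish the equivalences via the cycle $(i) \Rightarrow (ii) \Rightarrow (iii) \Rightarrow (i)$, leveraging the two cited works \cite{MV} and \cite{L} at the key steps.

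For $(i) \Rightarrow (ii)$: starting from a bounded cosilting complex $C$ with associated t-structure $(\mathcal{U},\mathcal{V}) = (\Perp{\leq 0}C, \Perp{>0}C)$, I would take $\mathcal{W} := \mathcal{V}\Perp{0}$ and show that $(\mathcal{V},\mathcal{W})$ is a torsion pair, i.e., I need to produce for every $X \in \Der(R)$ a triangle $V \to X \to W \to V[1]$ with $V \in \mathcal{V}$ and $W \in \mathcal{W}$. The key input is that a bounded cosilting complex $C$ is pure-injective in $\Der(R)$ and that $\Prod(C) \subseteq \mathcal{V}$ serves as a cogenerator of $\mathcal{V}$; from this the approximation triangles can be built by taking products indexed by maps into $C[i]$ across the admissible range of $i$. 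The cosuspension closure of $\mathcal{V}$ makes the resulting triple $(\mathcal{U}, \mathcal{V}, \mathcal{W})$ a cosuspended TTF triple. This implication is essentially the content of \cite{MV}.

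For $(ii) \Rightarrow (iii)$: the coaisle $\mathcal{V}$ inherits two sets of closure properties — closure under products, extensions, and cosuspensions from being a coaisle, and closure under coproducts and extensions from being the left class of the torsion pair $(\mathcal{V},\mathcal{W})$. Having both products and coproducts available together with the intermediate bound $\Der^{\geq m} \subseteq \mathcal{V} \subseteq \Der^{\geq n}$ should suffice to handle directed homotopy colimits: indeed, the existence of the right adjoint provided by the torsion pair $(\mathcal{V},\mathcal{W})$ implies that the inclusion $\mathcal{V} \hookrightarrow \Der(R)$ preserves all colimits existing in $\mathcal{V}$, and in the intermediate setting directed homotopy colimits can be built using only coproducts, extensions, and cosuspensions (via a Milnor-type presentation whose constituent triangles lie within the cohomological strip dictated by the intermediate bound). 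Alternatively this is the straightforward half of \cite{L}.

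The main obstacle is $(iii) \Rightarrow (i)$, which is where I would invoke \cite{L} in a substantive way. The main theorem of \cite{L} identifies homotopically smashing t-structures as those whose heart $\mathcal{H} = \mathcal{U} \cap \mathcal{V}[1]$ is a Grothendieck abelian category, so under $(iii)$ I obtain a Grothendieck heart admitting an injective cogenerator $E$. The strategy is then to promote $E$ to an honest bounded complex of injective $R$-modules realizing $(\mathcal{U},\mathcal{V})$: roughly, one takes an injective resolution of $E$ in $\ModR$ and truncates into the strip $[n,m]$ supplied by the intermediate assumption. Verifying that the resulting complex $C$ satisfies $\Perp{\leq 0}C = \mathcal{U}$ and $\Perp{>0}C = \mathcal{V}$ is essentially formal once $C$ has been identified with the canonical cogenerator of the heart, but the non-trivial content — namely, the existence of a bounded complex of injectives recovering the t-structure from its Grothendieck heart — is exactly the cosilting recognition theorem of \cite{MV}. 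The most delicate point is boundedness of $C$, which crucially uses the intermediate hypothesis to confine the resolution to finitely many degrees.
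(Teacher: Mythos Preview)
Your proposal and the paper's proof both ultimately reduce to citing \cite{MV} and \cite{L}, but the organization and the bridging concept differ. The paper does not argue in a cycle: it invokes \cite[Theorem~3.13]{MV} for the equivalence $(i)\Leftrightarrow(ii)$ directly, and then uses \cite[Theorem~3.14]{MV} to identify $(i)$ with the coaisle $\mathcal{V}$ being \emph{definable}, which in turn is equivalent to $(iii)$ by \cite[Theorem~4.6]{L}. The notion of definability is the hinge of the paper's argument and is absent from your account.

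Two specific issues with your sketch deserve mention. First, your direct argument for $(ii)\Rightarrow(iii)$ does not go through as stated: closure of $\mathcal{V}$ under coproducts, extensions, and cosuspensions gives closure under Milnor colimits of $\omega$-towers, but arbitrary directed homotopy colimits (over filtered categories of uncountable cofinality, say) are not reducible to such Milnor presentations, and the intermediate bound does not help here. You are right that this can be patched by citing \cite{L}, but the relevant content of \cite{L} is precisely that definability of $\mathcal{V}$ (which follows from $(ii)$ via \cite{MV}) is equivalent to the homotopically smashing property --- not a statement about colimits in a cohomological strip. Second, your description of $(iii)\Rightarrow(i)$ via a Grothendieck heart and its injective cogenerator is a plausible heuristic, but it is not what \cite{L} actually proves; Laking's theorem characterizes homotopically smashing t-structures via definability and pure-injectivity of a cogenerator, and it is \cite{MV} that then turns a pure-injective cogenerator of an intermediate t-structure into a bounded cosilting complex. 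Your resolution-and-truncation recipe for producing $C$ is not how the argument runs in either source.

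In short: the skeleton is right, but you should route both non-trivial implications through definability rather than through ad hoc colimit arguments or the heart.
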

	\begin{proof}
			The equivalence $(i) \Leftrightarrow (ii)$ is precisely \cite[Theorem 3.13]{MV}. Also, \cite[Theorem 3.14]{MV} implies that $(i)$ is equivalent to $\mathcal{V}$ being \EMP{definable} (see \cite{L} for the definition), which is in our situation equivalent to $(\mathcal{U},\mathcal{V})$ being homotopically smashing by \cite[Theorem 4.6]{L}.
	\end{proof}
	Using Theorem~\ref{T:TC}, we conclude with the following:
	\begin{cor}\label{C02}
			If the ring $R$ is commutative noetherian, the conditions of Theorem~\ref{T:cosilting} are equivalent to 
			\begin{enumerate}
				\item[(iv)] $(\mathcal{U},\mathcal{V})$ is compactly generated.
			\end{enumerate}
	\end{cor}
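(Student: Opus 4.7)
The plan is to show that for a commutative noetherian ring, condition (iv) is equivalent to condition (iii) of Theorem~\ref{T:cosilting}, from which the full equivalence with (i) and (ii) follows by Theorem~\ref{T:cosilting} itself.

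For the direction (iv)$\Rightarrow$(iii), I would simply invoke Proposition~\ref{P:properties}(ii), which states that any compactly generated t-structure is homotopically smashing; this holds over any ring and requires no noetherian hypothesis. The content of the corollary therefore really lies in the converse (iii)$\Rightarrow$(iv).

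For (iii)$\Rightarrow$(iv), the key observation is that any intermediate t-structure is, in particular, bounded below in the sense of Section~3. Indeed, by the definition of intermediate there exists an integer $n$ with $\mathcal{V}\subseteq \Der^{\geq n}$, which is exactly the bounded-below condition $\mathcal{V}\subseteq \Der^{+}(R)$. Combining this with the hypothesis that $(\mathcal{U},\mathcal{V})$ is homotopically smashing puts us precisely in the setting of Theorem~\ref{T:TC}, which over a commutative noetherian ring guarantees that any bounded below homotopically smashing t-structure is compactly generated. This yields (iv) immediately.

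There is essentially no technical obstacle: the corollary is a direct assembly of Proposition~\ref{P:properties}(ii), the definition of an intermediate t-structure, and Theorem~\ref{T:TC}. The only point that deserves a brief sentence in the write-up is the verification that ``intermediate'' entails ``bounded below,'' so that Theorem~\ref{T:TC} applies; once this is noted, the cycle of equivalences closes and the corollary follows.
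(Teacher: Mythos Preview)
Your proposal is correct and matches the paper's approach: the corollary is stated immediately after the phrase ``Using Theorem~\ref{T:TC}, we conclude with the following,'' and carries no separate proof, so the intended argument is exactly the one you describe---(iv)$\Rightarrow$(iii) by Proposition~\ref{P:properties}(ii), and (iii)$\Rightarrow$(iv) by observing that an intermediate t-structure is bounded below and then invoking Theorem~\ref{T:TC}.
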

	Corollary~\ref{C02} can be seen as a generalization of the cofinite type of $n$-cotilting modules over commutative noetherian ring proved in \cite{APST} to the cosilting setting. Also, in view of condition (ii) from Theorem~\ref{T:cosilting}, it provides a partial answer to \cite[Question 3.12]{SP}. Let us call a Thomason filtration $\Phi$ \EMP{bounded} if there are integers $m<n$ such that $\Phi(m) = \Spec(R)$ and $\Phi(n) = \emptyset$. We will also adopt the custom from cotilting theory and say that two cosilting complexes are \EMP{equivalent} if they induce the same cosilting t-structure.
	\begin{thm}\label{T:cosilt}
		Let $R$ be a commutative noetherian ring. There is a bijective correspondence:
		$$\left \{ \begin{tabular}{ccc} \text{ Bounded Thomason} \\ \text{filtrations $\Phi$ of $\Spec(R)$} \end{tabular}\right \}  \leftrightarrow  \left \{ \begin{tabular}{ccc} \text{Cosilting complexes} \\ \text{in $\Der(R)$ up to equivalence} \end{tabular}\right \}.$$
		The cosilting t-structure $(\mathcal{U}_\Phi,\mathcal{V}_\Phi)$ induced by a cosilting module associated to a Thomason filtration $\Phi$ by this correspondence can be described as follows:
		$$\mathcal{U}_\Phi = \{X \in \Der(R) \mid \Supp H^n(X) \subseteq \Phi(n) ~\forall n \in \mathbb{Z}\},$$
			$$\mathcal{V}_\Phi = \{X \in \Der(R) \mid \check{C}^{\sim}(I) \otimes_R X \in \Der{}^{>n} ~\forall V(I) \subseteq \Phi(n) ~\forall n \in \mathbb{Z}\}.$$
	\end{thm}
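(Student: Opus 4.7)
The plan is to reduce Theorem~\ref{T:cosilt} to the already-established bijection of Theorem~\ref{T:bounded} by sandwiching cosilting t-structures between two known classes. Concretely, the strategy has three stages: identify cosilting t-structures with compactly generated intermediate t-structures; match intermediate compactly generated t-structures with bounded (rather than merely bounded below) Thomason filtrations; and finally read off the aisle/coaisle description from the preceding lemmas.

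For the first stage, recall that two cosilting complexes are equivalent iff they induce the same t-structure, so it suffices to construct a bijection between bounded Thomason filtrations and cosilting t-structures. Theorem~\ref{T:cosilting} says that an intermediate t-structure is cosilting iff it is homotopically smashing. Over a commutative noetherian $R$, Corollary~\ref{C02} (which is a consequence of Theorem~\ref{T:TC}) tells us that bounded below homotopically smashing t-structures are compactly generated, while Proposition~\ref{P:properties}(ii) gives the converse. Hence, intermediate cosilting t-structures coincide with intermediate compactly generated t-structures.

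For the second stage, I would start from the correspondence of Theorem~\ref{T:bounded} between bounded below compactly generated t-structures $(\mathcal{U},\mathcal{V})$ and bounded below Thomason filtrations $\Phi$, via $\mathcal{U}=\mathcal{U}_\Phi$. Under this bijection, the condition $\mathcal{V}\subseteq \Der^{\geq n}$ is already accounted for by bounded below-ness of $\Phi$, i.e.\ some $\Phi(k)=\Spec(R)$. The remaining intermediate condition $\Der^{\geq m}\subseteq \mathcal{V}$ is equivalent to $\mathcal{U}\subseteq \Der^{<m}$. Using the description $\mathcal{U}_\Phi=\{X\mid \Supp H^n(X)\subseteq \Phi(n)\ \forall n\}$ together with the observation that stalk complexes $R/\pp[-n]$ with $\pp\in\Phi(n)$ lie in $\mathcal{U}_\Phi$, one sees that $\mathcal{U}_\Phi\subseteq \Der^{<m}$ iff $\Phi(n)=\emptyset$ for all $n\geq m$. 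Combined with bounded below-ness, this says precisely that $\Phi$ is bounded in the sense of the theorem. So the intermediate compactly generated t-structures are in bijection with bounded Thomason filtrations.

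For the final stage, the description of $\mathcal{U}_\Phi$ is given directly by Lemma~\ref{L:boundedbelow}(iii), and the description of $\mathcal{V}_\Phi$ is exactly the content of Proposition~\ref{P:Cech}. Putting these together with the previous two stages yields the claimed bijection and the explicit formulas. The main subtle point in the argument is the first stage, where one has to invoke Theorem~\ref{T:TC} to upgrade homotopically smashing to compactly generated; the translation of the intermediate condition into boundedness of $\Phi$ is then a direct bookkeeping step, and the coaisle formula is an immediate quotation of Proposition~\ref{P:Cech}. I do not anticipate any separate obstacle beyond verifying that the bounded below hypothesis of Theorem~\ref{T:TC} is satisfied for intermediate t-structures, which is automatic.
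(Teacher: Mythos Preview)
Your proposal is correct and follows essentially the same route as the paper's proof: the paper also reduces to Theorem~\ref{T:bounded}, invokes Corollary~\ref{C02} to identify cosilting with compactly generated in the intermediate case, and reads off the coaisle from Proposition~\ref{P:Cech}. The only difference is that you spell out in detail the step the paper calls ``easy to see'', namely that under Theorem~\ref{T:bounded} the intermediate condition $\Der^{\geq m}\subseteq\mathcal{V}$ translates exactly to $\Phi(n)=\emptyset$ for all $n\geq m$.
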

	\begin{proof}
			It is easy to see that bounded Thomason filtration correspond via Theorem~\ref{T:bounded} precisely to those compactly generated t-structures which are intermediate. The rest is a consequence of Corollary~\ref{C02}, Theorem~\ref{T:bounded}, and Proposition~\ref{P:Cech}.
	\end{proof}
	We conclude the paper by remarking that Theorem~\ref{T:cosilt} is a generalization of \cite[Theorem 5.1]{AH}, which gives an equivalence between cosilting modules and Thomason sets over a commutative noetherian ring $R$. Cosilting modules were introduced in \cite{AMV0} as module-theoretic counterparts of 2-term cosilting complexes in $\Der(R)$, and indeed, they correspond in a well-behaved manner precisely to 2-term cosilting complexes, up to a shift (\cite[Theorem 4.11]{AMV0}). Theorem~\ref{T:cosilt} extends this result from 2-term complexes to cosilting complexes of arbitrary length.
	\section*{Acknowledgement}
The paper was written during the author's stay at Dipartimento di Matematica of Universit\`{a} degli Studi di Padova. I would like to express my gratitude to everybody at the department --- and to Prof. Silvana Bazzoni in particular --- for all the hospitality, and all the stimulating discussions. Also, I am indebted to Jan \v{S}\v{t}ov\'{i}\v{c}ek for spotting an error in an earlier version of the manuscript.
\bibliographystyle{alpha}
\bibliography{comp_gen}
\end{document}